\let\reftagform@=\tagform@
\def\tagform@#1{\maketag@@@{(\ignorespaces\textcolor{purple}{#1}\unskip\@@italiccorr)}}
\renewcommand{\eqref}[1]{\textup{\reftagform@{\ref{#1}}}}
\DeclareUrlCommand\ULurl@@{%
  \def\UrlLeft{\uline\bgroup}%
  \def\UrlRight{\egroup}}
\def\ULurl@#1{\hyper@linkurl{\ULurl@@{#1}}{#1}}
\DeclareRobustCommand*\ULurl{\hyper@normalise\ULurl@}
\def\lessim{\ \lower4pt\hbox{$
		\buildrel{\displaystyle <}\over\sim$}\ }
\def\gessim{\ \lower4pt\hbox{$\buildrel{\displaystyle >}
		\over\sim$}\ }
\newcommand{\indi}{\ensuremath{\boldsymbol 1}}
\newtheorem{lemma}{\bf Lemma}
\newtheorem{definition}{\bf Definition}
\newtheorem{theorem}{\bf Theorem}
\newtheorem{corollary}{\bf Corollary}
\newtheorem{remark}{\bf Remark}
\newtheorem{claim}{\bf Claim}
\newtheorem{proposition}{\bf Proposition}
\newenvironment{Proof of lemma}{\noindent{\bf Proof of Lemma}}{\hfill$\Box$\newline}
\newenvironment{Proof of theorem}{\noindent{\bf Proof of Theorem}}{\hfill{\footnotesize${\square}$}\newline}
\newenvironment{Proof of theorems}{\noindent{\bf Proof of Theorems}}{\hfill$\Box$\newline}
\newenvironment{Proof of proposition}{\noindent{\bf Proof of Proposition}}{\hfill$\Box$\newline}
\newenvironment{Proof of propositions}{\noindent{\bf Proof of Propositions}}{\hfill$\Box$\newline}
\newenvironment{Proof of exercise}{\noindent{\it Proof of Exercise:}}{\hfill$\Box$}
\begin{document}

	\title{Pemantle's min-plus binary tree}
	
	\author{
		Antonio Auffinger \thanks{Northwestern University. Email: tuca@northwestern.edu }
	    \and
		Dylan Cable \thanks{Stanford University. Email: dcable@stanford.edu}
	}
	\maketitle

\begin{abstract}
We consider a stochastic process that describes several particles interacting by either merging or annihilation. When two particles merge, they combine their masses; when annihilation occurs, only the particle of smallest mass survives. Particles start at the bottom of a binary tree of depth $N$ and move towards the root. Assuming that merging or annihilation happens independently at random, we determine the limit law of the final  mass of the system in the large $N$ limit.

\end{abstract}

\section{Introduction and main results}

Consider a binary tree $\mathcal T$ of depth $N$, $N\geq 1$, and a family of independent Bernoulli random variables $\{\eta_{v} \}_{v \in \mathcal T}$ indexed by the vertices of $\mathcal T$ with common distribution
\[
\mathbb P (\eta_{v}=1)= 1-\mathbb P (\eta_{v}=-1) = p,
\]  
where $p\in [0,1]$. If $\eta_{v}=1$, we place the operation $+$ (addition) at the vertex $v$ and, if $\eta_{v}=-1$, we place the operation $\min$ (minimum) at $v$. We use these operations recursively  starting from the leaves to construct a random variable $X_{N}$ at the root as follows. If $v$ is a leaf, we set $X_{v} = 1$. If $w$ is a vertex of the tree with children $v_{1}$, $v_{2}$, we set 
\begin{equation}\label{eq:recurrencemaster}
X_{w}= \begin{cases}
X_{v_{1}} + X_{v_{2}}, &\text{ if } \eta_{w}=1,\\
\min(X_{v_{1}},X_{v_{2}}), &\text{ if } \eta_{w}=-1. 
\end{cases}
\end{equation}
An example of the construction above is given in Figure \ref{fig:easy}.
Last, we set $X_{N}= X_{root}$. The goal of this paper is to answer the question: 
\begin{equation}\label{q:question}
\text{What is the typical syze of } X_{N} \text{ for } N \text{ large}?
\end{equation}

Question \eqref{q:question} (with $p=1/2$) was raised as an open problem by Robin Pemantle in his talk ``(Some of) my favorite (current) problems''  at the 2017 Southeastern Probability Conference. The value of $X_{N}$ models the total mass of a particle system or the capacity of a network flow through a sequence of merges or annihilations. Here, each vertex of $\mathcal T$ represents a channel of this network or the moment that two particles interact. Each channel either processes all incoming traffic (merging) or just takes the smallest input (selection). The stochastic process considered in this paper is a toy model that provides a rigorous framework for such flow/particle systems.

\begin{figure}
\scalebox{1.0}{ 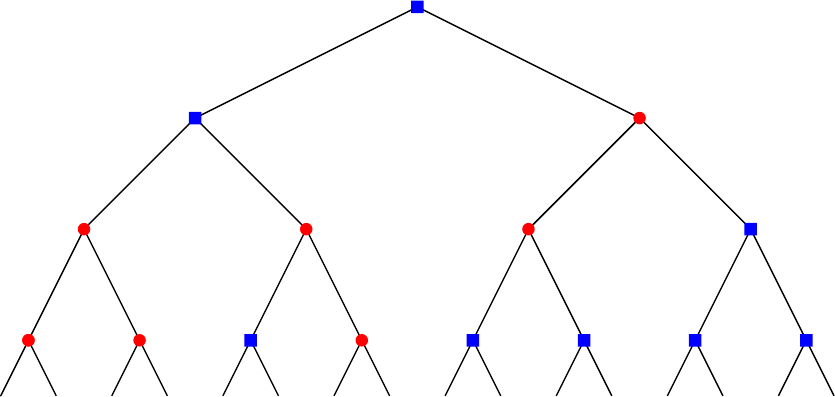}
 \centering
 \def \svgwidth{3000pt}
 \caption{The construction of the random variable $X_{N}$. Here $N=5$ and $X_{5}=4$. Red circles represent minimization while blue squares represent addition. }
 \label{fig:easy}
\end{figure}

The process defined through \eqref{eq:recurrencemaster} can be described by a random recursive relation. Random recurrence relations such as \eqref{eq:recurrencemaster} were also studied by Kesten \cite{Kesten}, Carmona-Petit-Yor \cite{CPY} and Collamore \cite{Colla} in the context of one dimensional or linear recursions. The equations considered in these papers (see also Aldous-Bandyopadhyay \cite{Aldous} and the references therein for related questions) were motivated by financial processes or by the study of random walks in random environments. In a random hierarchical lattice, this problem is related to the graph distance between two boundary points (see Hambly-Jordan \cite{Hambly}). The process $X_{N}$ is also related and inspired by the study of Boolean trees \cite{Pemantle, Gardy}. The reader is invited to check the paper of Pemantle-Ward \cite{Pemantle} and its references for more in this topic.

It is not difficult to determine the asymptotic behavior of $X_{N}$ when $p\neq 1/2$ (see Section \ref{sec:othercases}). If $p<1/2$ then $X_{N}$ is a tight family of random variables (Theorem \ref{thm:ASDpsdoihgui9fer}), while for $p>1/2$, $X_{N}$ grows exponentially with $N$. Our main theorem deals with the critical case of \eqref{q:question}, when $p=1/2$. 

\begin{theorem}\label{thm:distribution} Let $p=1/2$ and $c= \pi^{2}/3$. For all $t \in \mathbb R$,
\begin{equation*}
\lim_{N\to \infty} \mathbb P \left( \frac{1}{\sqrt{cN}}\log X_{N} \leq t\right) = 
\begin{cases}
0, \quad &\text{ if } \quad t \leq 0, \\
t^{2}, \quad &\text{ if } \quad 0\leq t \leq 1, \\
1, \quad &\text{ if } \quad t \geq 1.
\end{cases}
\end{equation*}
\end{theorem}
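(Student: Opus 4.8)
The plan is to analyze the random recurrence \eqref{eq:recurrencemaster} by passing to logarithms and tracking the distribution function. Let $F_N(x) = \mathbb P(X_N \leq x)$ (equivalently work with $x = e^s$ and the log-scale). From \eqref{eq:recurrencemaster} with $p=1/2$, conditioning on the operation at the root and using independence of the two subtrees, we get
\begin{equation*}
F_{N}(x) = \tfrac12\, \mathbb P(X' + X'' \leq x) + \tfrac12\big(1 - (1-F_{N-1}(x))^2\big),
\end{equation*}
where $X', X''$ are i.i.d.\ copies of $X_{N-1}$. The $\min$ term is exact and simple; the additive convolution term is the source of difficulty. The key structural observation is that $X_N$ grows like $e^{\Theta(\sqrt N)}$, so on the scale $s = \log x = \sqrt{cN}\, t$ the convolution $X' + X''$ is dominated by the \emph{larger} of the two summands (since two quantities of comparable exponential size add to essentially the larger one, up to a factor $2$ which is negligible on the $\sqrt N$ scale). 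Heuristically, then, on the exponential scale $\min$ and $+$ behave almost the same — both roughly return the extreme of the two inputs, but $\min$ returns the \emph{smaller} log and $+$ returns the \emph{larger} log. This is the crux: the process is a competition between $\min$-steps that pull $\log X$ down and $+$-steps that pull it up.

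**Reduction to a last-passage / extremal quantity.** The plan is to make the heuristic rigorous by sandwiching. For the lower bound on $\log X_N$ one can track, along each root-to-leaf path, the running "partial sums" built only from $+$ operations between consecutive $\min$ operations; a clean sub-case is to follow the path that, at each $\min$ vertex, descends into the child with the larger value. For the upper bound one uses $X' + X'' \leq 2\max(X',X'')$ repeatedly, which on the log scale costs only $O(\log 2)$ per level, i.e.\ $O(N)$ total — but wait, that is too lossy. Instead one argues that the number of $+$-steps actually encountered before a "bottleneck" of $\min$-steps kills the mass is itself of order $\sqrt N$, so the relevant combinatorial object is: walk down from the root, and let $\log X_N$ be controlled by the maximal number of consecutive (or near-consecutive) $+$ signs one can accumulate. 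Concretely, I expect $\log X_N \approx \log 2 \cdot (\text{length of the longest run-like structure of } + \text{ signs along some path from root to leaf})$, and a run of $+$'s of length $\ell$ among the first $\ell$ levels from the root contributes $X_N \geq 2^{\ell}$ on that path while a single $\min$ can only help. The combinatorics of "there exists a root-to-leaf path whose first $k$ edges all carry $+$" is a classical branching/percolation computation: with $2^k$ leaves at depth $k$ and each path having probability $2^{-k}$ of being all-$+$ in its first $k$ steps, the threshold is at $k \asymp$ solution of $2^k \cdot 2^{-k} \cdot(\dots)$ — this needs the second-moment/first-moment method on the tree and is where the constant $c = \pi^2/3$ must emerge.

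**Locating the constant $c=\pi^2/3$.** This is the step I expect to be the main obstacle, and where the precise distributional shape $t^2$ on $[0,1]$ comes from. The constant $\pi^2/3$ strongly suggests a connection to the spectral gap / ground-state energy of a random walk killed at a barrier, or equivalently to the Laplace-transform asymptotics of the recursion's fixed-point equation. My plan is: (i) linearize the recursion near the relevant scale by setting $G_N(s) = -\log(1 - F_N(e^s))$ or a similar transform that turns the $\min$-term $1-(1-F)^2$ into something additive; (ii) identify a travelling-wave / front for $F_N$ and show the front sits at position $s_N(t)$ with $s_N \sim \sqrt{cN}\,t$; (iii) compute $c$ by a large-deviations rate: the probability that a specific path accumulates a net log-mass $\geq \sqrt{cN}\,t$ after cancellations behaves like $\exp(-N I(t))$, and matching $2^N \cdot \exp(-N I(t)) \to$ (threshold) with the known random-walk rate function whose Legendre dual involves $\pi^2/3$ (this constant is exactly $\frac{1}{2}\,\mathrm{Var}$-type normalization for a uniform or exponential increment, or the $\sum 1/n^2$ appearing in the Laplace method for the relevant generating function). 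The shape $\mathbb P(\cdot \leq t) = t^2$ for $t\in[0,1]$ then arises because the event $\{\log X_N \leq \sqrt{cN}\,t\}$ is, to leading order, the event that \emph{two independent} subtree-quantities each fall below the scaled threshold — the square being the signature of the binary branching at the root iterated to the dominant depth — and the linear-in-$t$ factor inside comes from the entropy $2^k$ exactly cancelling the cost at the critical slope, leaving a one-parameter family indexed linearly by $t$.

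**Assembling the argument.** Finally I would combine: Theorem \ref{thm:ASDpsdoihgui9fer}-type tightness estimates control the contribution of the "bulk" (paths with balanced $\pm$ signs), a first-moment bound gives the upper tail $\mathbb P(\log X_N \geq \sqrt{cN}(1+\epsilon)) \to 0$ (no path can do better than the $+$-run threshold), a second-moment / conditional second-moment argument on the tree gives the matching lower bound and in fact the full interval $0 < t < 1$ with weight $t^2$, and the $\min$-operations are shown only to \emph{truncate} from above in a way that is asymptotically irrelevant below scale $1$ but forces the hard cutoff $\mathbb P(\cdot \le t) = 1$ for $t \ge 1$ (the system cannot exceed the all-$+$ path value $2^N$, and on the $\sqrt N$ scale even $\log 2^N = N\log 2 \gg \sqrt{cN}$, so the real cap at $t=1$ must come from the typical density of unavoidable $\min$'s along every long $+$-run — precisely the branching-random-walk barrier at slope $1$). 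The main obstacle remains the sharp evaluation of $c$ and the $t^2$ profile; everything else is a sandwiching/moment exercise built on the elementary bounds $\max(a,b) \le a+b \le 2\max(a,b)$ and $\min(a,b) \le \max(a,b)$ applied level by level on $\mathcal T$.
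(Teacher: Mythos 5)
There is a genuine gap. Your outline correctly identifies the heuristic tension between $+$ and $\min$ on the logarithmic scale, but the concrete reduction you propose is based on a false premise, and the two steps you yourself flag as the main obstacles (identifying $c=\pi^2/3$ and the $t^2$ profile) are exactly the content of the theorem and are never carried out. Specifically, your claim that ``a run of $+$'s of length $\ell$ among the first $\ell$ levels from the root contributes $X_N\geq 2^{\ell}$ on that path'' is incorrect: a single root-to-leaf path carrying $\ell$ consecutive $+$ signs only forces $X_v \geq X_{v_1}+X_{v_2} \geq X_{v_1}+1$ at each step, i.e.\ \emph{linear} growth $X_N \geq \ell+1$, because the off-path subtrees may each evaluate to $1$. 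Exponential growth $2^\ell$ requires an entire binary subtree of $+$'s, an event of doubly exponentially small probability. Consequently the first-moment/second-moment computation on ``longest $+$-runs'' that you propose cannot produce the threshold $e^{\sqrt{cN}}$, let alone the constant $\pi^2/3$; the true mechanism is a collective effect in which subtrees at \emph{all} depths contribute mass, and no single-path or single-run reduction captures it.

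For comparison, the paper works directly with the tail probabilities $p_{N,k}=\mathbb P(X_N\geq k)$, which satisfy the exact recursion \eqref{eq:firstrecurrence2}. The key structural fact is that the map $(p_{N,1},\dots,p_{N,k})\mapsto p_{N+1,k}$ has non-negative partial derivatives on the simplex of non-increasing sequences, so any array $q_{N,k}$ satisfying the recursion as an inequality (in the appropriate direction) and dominating (resp.\ dominated by) the initial condition propagates as a super- (resp.\ sub-) solution. The theorem then follows by exhibiting explicit super- and sub-solutions of the form $q_{N,k}=1-\log(k)^2/(cN)$ (with tail corrections), and the constant emerges not from a Legendre transform or spectral gap but from the elementary identity $\lim_{k\to\infty}\sum_{\ell=1}^{k-1}\frac{1}{\ell}\log\frac{k}{k-\ell}=\frac{\pi^2}{6}=\sum_{a\geq1}a^{-2}$, which is what makes the profile $1-\log(k)^2/(cN)$ an (approximate) fixed point of the recursion precisely when $c=\pi^2/3$. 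The $t^2$ law is then read off directly from this profile at $k=e^{t\sqrt{cN}}$. Your plan contains neither the monotonicity observation that makes the sandwiching rigorous nor any computation that could produce this series, so as it stands it is a program rather than a proof.
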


The theorem above implies that $X_{N}$ is of order $\exp(\sqrt{N})$; this scaling was conjectured by Robin Pemantle, although not the exact value of the constant $c$. From the above theorem, we also obtain the behavior of the expectation of $\log X_{N}$.

\begin{corollary}\label{thm:expectation} Let $p=1/2$. Then 
\begin{equation*}
\lim_{N\to \infty} \frac{1}{\sqrt{N}} \mathbb E \log X_{N}  = \frac{2\pi}{3\sqrt{3}}.
\end{equation*}
\end{corollary}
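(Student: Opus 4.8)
The plan is to derive the Corollary from Theorem~\ref{thm:distribution} by combining weak convergence with a uniform integrability estimate. Write $W_N := \frac{1}{\sqrt{cN}}\log X_N$. Since $X_v=1$ at the leaves and both operations in \eqref{eq:recurrencemaster} map $[1,\infty)^2$ into $[1,\infty)$, we have $X_N\ge 1$ and hence $W_N\ge 0$. Theorem~\ref{thm:distribution} states precisely that $W_N$ converges in distribution to a random variable $W$ whose law is the probability measure on $[0,1]$ with cumulative distribution function $t\mapsto t^2$, i.e.\ with density $2t\,\mathbf 1_{[0,1]}(t)$. A direct computation gives $\mathbb E W=\int_0^1 t\cdot 2t\,dt=2/3$. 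Since $\mathbb E W_N=\frac{1}{\sqrt{cN}}\,\mathbb E\log X_N$ and $\sqrt c=\pi/\sqrt3$, the asserted limit $\frac{2\pi}{3\sqrt3}=\sqrt c\cdot\frac23$ is exactly $\sqrt c\,\mathbb E W$. Thus the Corollary is equivalent to $\mathbb E W_N\to\mathbb E W$, and, $W_N$ being nonnegative and convergent in law, it remains only to show that $\{W_N\}_{N\ge1}$ is uniformly integrable.

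For uniform integrability it suffices to bound a moment of order $>1$ uniformly in $N$; I will aim for $\sup_N \mathbb E[W_N^2]<\infty$, i.e.\ $\mathbb E[(\log X_N)^2]\le C\,N$ with $C$ independent of $N$, which in turn follows from a tail bound $\mathbb P(\log X_N\ge s\sqrt N)\le h(s)$ valid for all $N$ with $\int_0^\infty s\,h(s)\,ds<\infty$ (then $\mathbb E[W_N^2]=\int_0^\infty 2t\,\mathbb P(W_N\ge t)\,dt$ is controlled uniformly). To get such a tail bound I would use the recursive description of $X_N$: conditionally on the root operation, $X_N$ is distributed as $X'+X''$ with probability $1/2$ and as $\min(X',X'')$ with probability $1/2$, where $X',X''$ are independent copies of $X_{N-1}$. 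Using $\mathbf 1\{a+b\ge m\}\le \mathbf 1\{a\ge m/2\}+\mathbf 1\{b\ge m/2\}$ and $\mathbf 1\{\min(a,b)\ge m\}=\mathbf 1\{a\ge m\}\mathbf 1\{b\ge m\}$ together with independence, one obtains, with $p_N(m)=\mathbb P(X_N\ge m)$,
\[
p_N(m)\ \le\ p_{N-1}(m/2)\ +\ \tfrac12\,p_{N-1}(m)^2,\qquad p_0(m)=\mathbf 1\{m\le 1\}.
\]
Iterating this inequality — the squaring term being essential, since dropping it merely recovers the useless bound $X_N\le 2^N$ — one propagates smallness: once $p_N(m)$ falls below a fixed threshold, the squaring drives it down rapidly, uniformly in $N$, giving a bound of the required type on the scale $\sqrt N$. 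Alternatively, and perhaps more cleanly, such a uniform upper‑tail estimate for $\log X_N$ is already contained in the proof of Theorem~\ref{thm:distribution}, where control of the right tail is exactly what shows that the limit law is supported in $[0,1]$ and in particular that $\mathbb P(W_N>t)\to 0$ for $t>1$; one may simply quote that estimate. Given the tail bound, uniform integrability follows, hence $\mathbb E W_N\to 2/3$, and the Corollary follows.

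The main obstacle is this uniform integrability step: weak convergence alone does not deliver convergence of the expectation, and the crude a priori bounds are far too lossy — $X_N\le 2^N$, as well as $\mathbb E[X_N^\theta]\le(3/2)^N$ for $\theta\in(0,1)$ (from $(x+y)^\theta\le x^\theta+y^\theta$ and $\min(x,y)^\theta\le x^\theta$), both only yield $\mathbb E[W_N^2]=O(N)$, which is insufficient. What must be exploited is the variance reduction caused by the $\min$ vertices, namely that the minimum of two independent copies of a heavy variable is typically much smaller than either copy; quantifying this through the recursion above (or extracting it from the proof of Theorem~\ref{thm:distribution}) so as to get a genuinely $N$‑uniform sub‑exponential tail for $\log X_N/\sqrt N$ is the crux. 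Once that is in hand, the remainder is routine.
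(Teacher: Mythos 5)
Your proposal is correct and takes essentially the same route as the paper: the published proof reduces the Corollary to uniform integrability of $W_N=\log X_N/\sqrt{cN}$, obtains $\sup_N\mathbb E[W_N^2]<\infty$ by quoting precisely the exponential (model-2) tail of Proposition \ref{prop:upperbound} for $\log k\ge\sqrt{\tilde cN+\beta^2}-\beta$ — i.e.\ your ``alternatively, simply quote that estimate'' option — and then concludes $\mathbb E W_N\to\int_0^1 2t^2\,dt=2/3$, hence the stated limit after multiplying by $\sqrt c=\pi/\sqrt3$. Your self-contained recursion $p_N(m)\le p_{N-1}(m/2)+\tfrac12 p_{N-1}(m)^2$ is left as an uncompleted sketch and is not needed, since the quotable tail bound holds for all $k\ge1$; take that route.
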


Our strategy for proving Theorem \ref{thm:distribution} is to study a recurrence relation for the distribution function of $X_{N}$. In order to describe this recurrence, let $N,k \geq 1$, and set 
\[
p_{N,k} := \mathbb{P} \left( X_N \geq k \right).
\]
Consider the two subtrees of depth $N$ attached to the root $r$ of a tree $\mathcal T_{N+1}$ of depth $N+1$. Let $v_{1}$ and $v_{2}$ be the two children coming from the root of the tree $\mathcal T_{N+1}$. Note that we can decompose the event $\{X_{N+1} \geq k\}$ into a union of disjoint events:
\begin{equation*}
\begin{split}
\{X_{N+1} \geq k\} = \{X_{v_{1}}\geq k, X_{v_{2}} \geq k\}& \cup \{\eta_{r} = +, X_{v_{1}} \geq k, X_{v_{2}} < k\}
\\ &\cup \{\eta_{r} = +, X_{v_{1}} < k, X_{v_{2}} \geq k\}
\\ &\cup  \{\eta_{r}=+,X_{v_{1}}=1,k>X_{v_{2}}\geq k -1\} 
\\ &\cup \{\eta_{r}=+,X_{v_{1}}=2,k>X_{v_{2}}\geq k -2\}
\\& \cdots \\
&\cup \{\eta_{r}=+,X_{v_{1}}=k-1,k>X_{v_{2}}\geq 1\}.
\end{split}
\end{equation*}
This decomposition yields the recurrence:
\begin{equation*} \label{eq:firstrecurrence}
\begin{split}
    p_{N+1,k} = p_{N,k}^2 + \frac{1}{2} \bigg(&2p_{N,k}(1-p_{N,k}) + (p_{N,1} - p_{N,2})(p_{N,k-1}-p_{N,k}) \\
    &\quad  + (p_{N,2} - p_{N,3})(p_{N,k-2}-p_{N,k})  + \cdots \\&\quad +(p_{N,k-1} - p_{N,k})(p_{N,1}-p_{N,k})\bigg),
\end{split}
\end{equation*}
that is,
\begin{equation} \label{eq:firstrecurrence2}
\begin{split}
    p_{N+1,k} - p_{N,k}= \frac{1}{2} \sum_{\ell =1}^{k-1} (p_{N,\ell}-p_{N,\ell+1})(p_{N,k-\ell}-p_{N,k}).
\end{split}
\end{equation}
The initial condition is given by
\[
p_{1,1} = 1, \quad p_{1,k}=0, \quad \text{ for } k>1. 
\]

Our strategy is to show that 
$$p_{N,k}=\mathbb{P}\bigg(X_N \geq k\bigg)  \sim 1 -\frac{\log(k)^2}{cn},$$
as $N$ gets large.
As we will see later on, the main observation is that the right hand side of the recurrence \eqref{eq:firstrecurrence2} has non-negative partial derivatives in the variables $p_{N,\ell}$, $1\leq \ell \leq k-1$. This allow us to change \eqref{eq:firstrecurrence2} to a sequence of inequalities and this approach provides upper and lower bounds for $p_{N+1,k}$.

We end this section commenting on previous unpublished work on this question. These remarks were communicated to us by Robin Pemantle \cite{Pemantle2} and are not used in this paper. First, Albert Chen proved that $\mathbb P(X_{N}=k)$ decreases with $k$ for any $N$ fixed. This fact was independently discovered by Tam Cheetham-West at PCMI. Second, Thomas Duquesne established the upper bound 
\[
\lim_{N\to \infty} \mathbb P \bigg( \log X_{N}/\sqrt{N} \leq 2 \bigg) = 1.
\]
Last, a lower bound was recently discovered by Jian Ding: 
\[ \mathbb P\bigg(\log X_{N}/\sqrt{N} \leq \varepsilon\bigg)< g(\varepsilon), 
\]
for some function $g$ so that $g(\varepsilon) \to 0$ as $\varepsilon \to 0$. 

\subsection{Acknowledgments}
Both authors would like to thank Robin Pemantle for sharing his initial notes and explaining the initial motivation for this problem. The authors also thank Nicolas Curien for pointing out reference \cite{Hambly}. This project started at the 2017 IAS PCMI summer school in Park City, Utah. The school was supported by NSF grant DMS 1441467. The research of A.A. is supported by NSF CAREER grant DMS 1653552. The research of D.C. is supported by a UAR major grant from Stanford University.  D.C. would like to thank the department of mathematics at Northwestern University for hospitality during his visit.  D.C. would like to thank Mark Selke, Tam Cheetham-West, Kai Hugtenburg, Emilia Alvarez, Terrence Tsui and Polina Belopashentseva for initial discussions on this problem at PCMI. 

\section{Upper Bound for $p_{N,k}$}

In this section, we establish an upper bound for $p_{N,k}$. Our main goal is to prove the following proposition.
\begin{proposition}\label{prop:upperbound} 
Let $C>\pi^{2}/3$. There exists $\beta>1$  and $N_{0}$ such that for all $N \geq 1$, if we let $n = N + N_0$ and $k \geq 1$,
\begin{equation}\label{cownunchuckpudding}
p_{N,k} \leq  
 \begin{cases} 
   1-\frac{\log(k)^2}{nC} &\text{ if } \log k < \sqrt{nC+\beta^2} - \beta, \\
      (\frac{2\beta\sqrt{nC+\beta^2}-2\beta^2}{Cn})e^{-\beta^{-1}(\log k - (\sqrt{nC+\beta^2} - \beta))} &\text{ if } \log k \geq \sqrt{nC+\beta^2} - \beta.
   \end{cases}
\end{equation}  
\end{proposition}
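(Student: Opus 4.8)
The plan is to view the recurrence \eqref{eq:firstrecurrence2} as a monotone operator, produce the explicit supersolution appearing on the right of \eqref{cownunchuckpudding}, and conclude by a comparison argument and induction on $N$.

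\emph{Step 1: a comparison principle.} Write \eqref{eq:firstrecurrence2} as $p_{N+1,k}=G_k(p_{N,1},\dots,p_{N,k})$ with $G_k(a_1,\dots,a_k)=a_k+\tfrac12\sum_{\ell=1}^{k-1}(a_\ell-a_{\ell+1})(a_{k-\ell}-a_k)$. A direct computation shows that at any point with $a_1\ge\cdots\ge a_k$ in $[0,1]$ one has $\partial G_k/\partial a_j=a_{k-j}-a_{k-j+1}\ge0$ for $1\le j\le k-1$ and $\partial G_k/\partial a_k=1-\tfrac12(a_1-a_k)\ge\tfrac12$. Hence if $(a_j)$ and $(b_j)$ are nonincreasing, $[0,1]$-valued with $a_j\le b_j$ for all $j$, then the sequences obtained by replacing $a_1,\dots,a_m$ by $b_1,\dots,b_m$ are still nonincreasing, so the partials stay nonnegative along each coordinate segment and $G_k(a_1,\dots,a_k)\le G_k(b_1,\dots,b_k)$. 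Since $p_{N,\cdot}$ is (trivially) nonincreasing, it suffices to exhibit a nonincreasing, $[0,1]$-valued family $q(n,\cdot)$ ($n\ge N_0+1$) with $p_{1,k}\le q(N_0+1,k)$ for all $k$ and satisfying the \emph{supersolution inequality}
\begin{equation}\label{eq:super}
q(n+1,k)\ \ge\ q(n,k)+\tfrac12\sum_{\ell=1}^{k-1}\big(q(n,\ell)-q(n,\ell+1)\big)\big(q(n,k-\ell)-q(n,k)\big)
\end{equation}
for all $n\ge N_0+1$, $k\ge1$; then $p_{N,k}\le q(n,k)$ for $n=N+N_0$ follows by induction on $N$.

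\emph{Step 2: the ansatz.} Take $q(n,k)$ to be the right-hand side of \eqref{cownunchuckpudding}, write $L=L(n):=\sqrt{nC+\beta^2}-\beta$ (the positive root of $L^2+2\beta L=nC$) and $A_n:=1-L^2/(nC)$. One verifies $A_n=2\beta L/(nC)=(2\beta\sqrt{nC+\beta^2}-2\beta^2)/(nC)$, so at $\log k=L$ the parabola $1-\log(k)^2/(nC)$ and the exponential $A_ne^{-\beta^{-1}(\log k-L)}$ agree in value \emph{and} in first derivative in $\log k$; thus $q(n,\cdot)$ is $C^1$ in $\log k$. From $L<\sqrt{nC}$ one gets $q(n,k)\in[0,1]$; since $\log k\mapsto1-\log(k)^2/(nC)$ is decreasing for $\log k\ge0$ and the exponential is decreasing, $q(n,\cdot)$ is nonincreasing on $\{k\ge1\}$, with $q(n,1)=1$. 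The initialization is immediate: $p_{1,1}=1=q(N_0+1,1)$ and $p_{1,k}=0\le q(N_0+1,k)$ for $k\ge2$. It remains to prove \eqref{eq:super}.

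\emph{Step 3: the bulk regime and the constant $\pi^2/3$.} Fix $n\ge N_0+1$ and suppose $\log k\le L(n)$. Because $L$ is increasing, all of $\log\ell,\log(\ell+1),\log(k-\ell),\log k$ are $\le L(n)<L(n+1)$, so both sides of \eqref{eq:super} involve only the parabola branch, and \eqref{eq:super} becomes
\[
\frac{2\,nC}{\,n+1\,}\,(\log k)^2\ \ge\ \sum_{\ell=1}^{k-1}\big(\log(\ell+1)^2-\log(\ell)^2\big)\big(\log(k)^2-\log(k-\ell)^2\big).
\]
Writing $\log(\ell+1)^2-\log(\ell)^2=\int_\ell^{\ell+1}\tfrac{2\log t}{t}\,dt$ and substituting $\ell=k t$, the sum equals $\int_0^1\tfrac{2(\log k+\log t)}{t}\big(-2\log k\,\log(1-t)-\log(1-t)^2\big)\,dt$ up to errors concentrated near $\ell\in\{1,k-1\}$ that are $o(1)$ in $\log k$, and its leading term is
\[
-4(\log k)^2\int_0^1\frac{\log(1-t)}{t}\,dt=\frac{2\pi^2}{3}(\log k)^2,\qquad \int_0^1\frac{-\log(1-t)}{t}\,dt=\mathrm{Li}_2(1)=\frac{\pi^2}{6},
\]
the subleading integrals contributing $O(\log k)$. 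So the right side equals $\tfrac{2\pi^2}{3}(\log k)^2+O(\log k)$; since $\tfrac{nC}{n+1}\to C>\pi^2/3$, the inequality holds once $N_0$ is large enough that the positive gap $2\big(\tfrac{nC}{n+1}-\tfrac{\pi^2}{3}\big)(\log k)^2$ dominates the error (with a separate direct check for the finitely many $k$ in a bounded range). This is where \eqref{eq:super} is genuinely tight and where the constant $c=\pi^2/3$ of Theorem \ref{thm:distribution} is produced.

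\emph{Step 4: the tail and the transition, and the main difficulty.} For $\log k>L(n)$ the left side of \eqref{eq:super} equals $(B_{n+1}-B_n)k^{-1/\beta}$ when $\log k\ge L(n+1)$, where $B_n:=A_ne^{L(n)/\beta}$, and since $\tfrac{d}{dn}\log B_n\sim\tfrac{\sqrt C}{2\beta\sqrt n}$ it is $\asymp q(n,k)/\sqrt n$ throughout this range. On the right one splits the sum according to which branch of \eqref{cownunchuckpudding} each of $\ell$ and $k-\ell$ falls in and converts each piece to an integral: the parabola$\times$parabola piece occurs only for $\log k$ near $L(n)$ and is a perturbation of Step 3, the $C^1$-matching keeping $q(n,\cdot)$ within $o(1)$ of the parabola it agrees with so that the slack from Step 3 absorbs it; the parabola$\times$exponential piece with $k-\ell$ close to $k$ is $O\big(q(n,k)\,e^{L}/k\big)$; and the exponential$\times$exponential and exponential$\times$parabola pieces are $O\big(q(n,k)^2\big)$, the integrals $\int_0^1 t^{-1/\beta-1}\big((1-t)^{-1/\beta}-1\big)\,dt$ that appear being finite precisely because $\beta>1$. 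As $e^{L}/k\le1$ and $q(n,k)=O(1/\sqrt n)$ on this range, every piece is controlled by a constant multiple of $q(n,k)/\sqrt n$, and these constants sum to less than $1$ provided $C>\pi^2/3$, $N_0$ is large, and $\beta$ is fixed in an appropriate range depending on $C$. The quantitative choice of $\beta$ and the treatment of the transition window $\log k\approx L(n)$, where none of the crude tail estimates is individually sharp and one must interpolate with Step 3, is the main technical obstacle of the proof; everything else is either soft (Steps 1--2) or the clean dilogarithm computation of Step 3. Combining Steps 1--4 gives $p_{N,k}\le q(n,k)$ for all $N\ge1$ with $n=N+N_0$, which is \eqref{cownunchuckpudding}.
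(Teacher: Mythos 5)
Your architecture is the same as the paper's: the monotone comparison principle of Step 1 is the paper's Lemma \ref{lem:lemUB} (with a harmless slip: $\partial G_k/\partial a_k = 1-(a_1-a_k)$, not $1-\tfrac12(a_1-a_k)$, though still nonnegative on $\mathcal S_k$), the ansatz and its $C^1$ matching at $\log k=\sqrt{nC+\beta^2}-\beta$ are exactly \eqref{def:qnk}, and your dilogarithm computation in Step 3 produces the same bound $\tfrac{2\pi^2}{3}(\log k)^2$ that the paper extracts from the discrete estimate $h(k)=\sum_{\ell<k}\ell^{-1}\log\frac{k}{k-\ell}\le\frac{\pi^2}{6}$ (Lemma \ref{lem:calculuseries}). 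Two differences are worth noting: you pass to integrals where the paper manipulates the discrete sums directly (your ``errors concentrated near $\ell\in\{1,k-1\}$'' are exactly what the paper's bookkeeping with the $e_\ell$'s controls, and only for $k\ge150$); and you dispose of small $k$ by ``a separate direct check,'' whereas the paper devotes Lemma \ref{lem:Lemma2} to it, running a different induction through the constants $b_k=1+\sqrt{1+d_k}$ with a numerical verification that $b_k>3\log(k)^2/\pi^2$ for $k\le150$. Your finite check is plausible but not carried out, and it is genuinely needed because the asymptotic estimates of Step 3 do not apply for bounded $k$.

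The real gap is Step 4, which you correctly identify as the crux and then leave at the level of order-of-magnitude assertions. Three things there require proof and are where the paper spends most of its effort. First, in the pure-tail regime one needs the sharp constant in $\sum_{\ell}(\ell^{-\alpha}-(\ell+1)^{-\alpha})((k-\ell)^{-\alpha}-k^{-\alpha})\le\frac{\alpha^2}{k^{2\alpha}}(1+\varepsilon)\frac{\pi^2}{6}$ (the paper's Lemma \ref{lem:SRSS}), not merely finiteness of $\int_0^1t^{-1/\beta-1}\bigl((1-t)^{-1/\beta}-1\bigr)\,dt$: the gain $q(n+1,k)/q(n,k)-1\approx\frac{\alpha\sqrt C}{2\sqrt n}$ and the loss $\approx\frac{\alpha\pi^2(1+\varepsilon)}{6\sqrt{nC}}$ both carry a factor of $\alpha$, so the comparison reduces precisely to $C\ge\frac{\pi^2(1+\varepsilon)}{3(1-\varepsilon)}$, and an unquantified $O(q^2)$ would not close the argument for $C$ near $\pi^2/3$. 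Second, replacing the exponential branch by the parabola for the small-$\ell$ arguments must be shown to \emph{decrease} the right-hand side; the paper does this by comparing logarithmic derivatives of the two branches at the matching point. Third, in the transition window $L(n)\le\log k<L(n+1)$ the ``slack from Step 3'' is $\gamma(C)(\log k)^2/N^2\asymp1/N$, while the model-switching error is shown to be $O(N^{-3/2})$ only after the concavity-and-endpoint analysis of the gap function $g(m)$ in Lemma \ref{lem:middle}; nothing in your sketch establishes this order, and without it the interpolation you invoke is not justified.
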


We first explain the main construction behind the proof of the proposition above. Suppose we choose an array of numbers $\{q_{N,k}\}_{N, k\geq 1}$, that  satisfies the following inequality for $N \geq N_0, k \geq 1$:
\begin{equation}\label{rec:UB}
q_{N+1,k} - q_{N,k} \geq \frac{1}{2} \sum_{\ell =1}^{k-1} (q_{N,\ell}-q_{N,\ell+1})(q_{N,k-\ell}-q_{N,k}),
\end{equation}
and such that 
\begin{equation}\label{rec:UBic}
q_{N_{0},k} \geq p_{1,k}= \indi \{ k=1 \} \text{ for all } k\geq 1.
\end{equation}

Furthermore, assume that for each $N$ and $k$, the vector $(q_{N,1}, \ldots, q_{N,k})$ belongs to the set 
\[
\mathcal S_{k} = \bigg\{(x_{1}, \ldots, x_{k})\in \mathbb{R}^k \bigg |1 \geq x_1 \geq x_2 \geq \dots \geq x_k \geq 0\bigg\}.
\] 
 Then, we claim that by induction
 \begin{equation}\label{eq:induction}
 q_{N_{0}+N,k} \geq p_{1+N,k}, \text{ for all } k \geq 1, N \geq 1.
 \end{equation}
Indeed, define the function 
\begin{equation}\label{def:mainfunction}
    f(x_1,x_2,\dots,x_k) := x_k + \frac{1}{2} \sum_{\ell =1}^{k-1} (x_{\ell}-x_{\ell+1})(x_{k-\ell}-x_{k}).
\end{equation}
The function $f$ has partial derivatives given by
\begin{equation}\label{eq:PD}
    \frac{\partial f}{\partial x_j} = \begin{cases}
    1 - x_1 + x_k, & \text{ if } j = k, \\ 
    x_{k-j} - x_{k-j+1}, &\text{ if } j < k,
    \end{cases}
\end{equation}
which are non-negative in the domain $\mathcal S_{k}$.
 
 Now assume that for some $N$, $q_{N_0+N,j} \geq p_{1+N,j}$ for all $j \leq k$.  Using the recurrence \eqref{rec:UB},  we obtain
\begin{equation*}
\begin{split}
    q_{N_{0}+N+1,k} &\geq q_{N_{0}+N,k} + \frac{1}{2} \sum_{\ell =1}^{k-1} (q_{N_{0}+N,\ell}-q_{N_{0}+N,\ell+1})(q_{N_{0}+N,k-\ell}-q_{N_{0}+N,k})\\
&=f(q_{N_{0}+N,1}, \ldots, q_{N_{0}+N,k}) \\
&\geq f(p_{1+N,1}, \ldots, p_{1+N,k}) \\
&=p_{N+2,k},
\end{split}
\end{equation*}
where in the last inequality we used the inductive hypothesis and the fact that the partial derivatives of $f$ are non-negative in $\mathcal S_{k}$. The last equality is \eqref{eq:firstrecurrence2}. Hence, this calculation combined with \eqref{rec:UBic} proves our claimed inequality \eqref{eq:induction}. To sum up, we have proven:

\begin{lemma}\label{lem:lemUB}
Assume that $\{q_{N,k}\}_{k,N\geq 1}$ is an array that satisfies \eqref{rec:UB} and \eqref{rec:UBic} and such that $(q_{N,1}, \ldots, q_{N,k}) \in \mathcal S_{k}$ for all $N$ and $k$. Then \eqref{eq:induction} holds. In this case, we say that the array $q_{N,k}$ is an upper bound for $p_{N,k}$. 
\end{lemma}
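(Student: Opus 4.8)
The plan is to prove the asserted inequality \eqref{eq:induction} by induction on $N$, with the coordinatewise monotonicity of the map $f$ from \eqref{def:mainfunction} as the only real ingredient. Concretely, I would take as inductive hypothesis the statement ``$q_{N_0+N,j}\geq p_{1+N,j}$ for every $j\geq 1$'', the base case $N=0$ being precisely the assumed initial inequality \eqref{rec:UBic}, namely $q_{N_0,k}\geq p_{1,k}=\indi\{k=1\}$.

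For the inductive step, fix $k$ and assume $q_{N_0+N,j}\geq p_{1+N,j}$ for all $j\leq k$. Since $q$ satisfies the super-solution inequality \eqref{rec:UB}, we have $q_{N_0+N+1,k}\geq f(q_{N_0+N,1},\dots,q_{N_0+N,k})$. Now I would invoke the explicit partial-derivative formula \eqref{eq:PD}: on $\mathcal S_k$ one has $1-x_1+x_k\geq 0$ and $x_{k-j}-x_{k-j+1}\geq 0$, so $f$ is non-decreasing in each coordinate on $\mathcal S_k$. Both vectors $(q_{N_0+N,1},\dots,q_{N_0+N,k})$ and $(p_{1+N,1},\dots,p_{1+N,k})$ lie in $\mathcal S_k$ — the former by hypothesis, the latter because $p_{N,\ell}=\mathbb P(X_N\geq\ell)$ is automatically non-increasing in $\ell$ and lies in $[0,1]$ — and the former dominates the latter coordinatewise by the inductive hypothesis. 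Hence $f(q_{N_0+N,1},\dots,q_{N_0+N,k})\geq f(p_{1+N,1},\dots,p_{1+N,k})$. Finally, the rewritten master recurrence \eqref{eq:firstrecurrence2} says exactly that $f(p_{1+N,1},\dots,p_{1+N,k})=p_{N+2,k}$, which chains together to give $q_{N_0+N+1,k}\geq p_{N+2,k}$ and closes the induction.

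There is essentially no obstacle in this lemma itself; the argument is a monotone-comparison principle and the work is purely bookkeeping — matching the index shifts ($N_0+N$ against $1+N$), confirming the derivative computation \eqref{eq:PD}, and checking that the two relevant vectors sit in $\mathcal S_k$ so that monotonicity applies. The genuine difficulty postponed by this lemma is the \emph{existence} of an array $\{q_{N,k}\}$ simultaneously satisfying \eqref{rec:UB}, the initial bound \eqref{rec:UBic}, and the ordering constraint $(q_{N,1},\dots,q_{N,k})\in\mathcal S_k$ while also being quantitatively close to $1-\log(k)^2/(cn)$; that construction is the content of Proposition \ref{prop:upperbound} and is where the analysis actually happens.
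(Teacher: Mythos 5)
Your proposal is correct and follows exactly the paper's argument: induction on $N$ with base case \eqref{rec:UBic}, the super-solution inequality \eqref{rec:UB} to pass to $f$, and the non-negativity of the partial derivatives \eqref{eq:PD} on $\mathcal S_k$ to compare $f$ at the two vectors, finishing with \eqref{eq:firstrecurrence2}. The only addition is your explicit remark that the $p$-vector also lies in $\mathcal S_k$, which the paper leaves implicit.
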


\begin{remark}\label{rem:rem11}
If the initial condition \eqref{rec:UBic} is replaced by 
\begin{equation*}
q_{N_{0},k} \geq p_{1,k} \quad \text{ for } \quad 1 \leq k \leq K_{0},
\end{equation*}
and inequality \eqref{rec:UB} is only required to hold for $k \leq K_0$,
then the proof of Lemma \ref{lem:lemUB} implies 
\[
q_{N_{0}+N,k} \geq p_{1+N,k}, \text{ for all } N \geq 1 \text{ and } 1 \leq k \leq K_{0}.
\]
In this case, we say that $q_{N,k}$ is an upper bound for $p_{N,k}$ when $k\leq K_{0}$.
\end{remark}

Let us now use this strategy to show an upper bound for small $k \leq K_0$.  

\begin{lemma}\label{lem:Lemma2} For any $c > \pi^{2}/3$ there exists a $N_0 \geq 1$ such that for $N\geq N_{0}$
\[ q_{N,k} := 1 - \frac{\log(k)^2}{cN} \] 
is an upper bound for $p_{N,k}$ when $k \leq K_0 = 150$.
\end{lemma}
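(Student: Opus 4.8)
The plan is to verify the three hypotheses of Lemma~\ref{lem:lemUB} in the form given by Remark~\ref{rem:rem11} with $K_0 = 150$ for the array $q_{N,k} = 1 - \log(k)^2/(cN)$, for a suitable choice of $N_0$ depending on $c > \pi^2/3$. The monotonicity hypothesis $(q_{N,1},\dots,q_{N,k}) \in \mathcal S_k$ is immediate for $N$ large: $q_{N,1} = 1$, the sequence is decreasing in $k$ since $\log(k)^2$ is increasing, and $q_{N,k} \geq 1 - \log(K_0)^2/(cN) \geq 0$ once $N_0 \geq \log(150)^2/c$. The initial condition $q_{N_0,k} \geq p_{1,k} = \indi\{k=1\}$ for $1 \leq k \leq K_0$ also holds trivially once $q_{N_0,k} \geq 0$, i.e. for the same range of $N_0$. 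So the entire content of the lemma is the recurrence inequality \eqref{rec:UB} restricted to $2 \leq k \leq 150$ (the case $k=1$ being trivial since both sides vanish).

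For the recurrence, write $L = \log(k)$, $L_\ell = \log(\ell)$, and abbreviate $a_\ell = q_{N,\ell} - q_{N,\ell+1} = (\log(\ell+1)^2 - \log(\ell)^2)/(cN)$ and $b_\ell = q_{N,k-\ell} - q_{N,k} = (L^2 - \log(k-\ell)^2)/(cN)$. Then the right-hand side of \eqref{rec:UB} equals $\frac{1}{2}\sum_{\ell=1}^{k-1} a_\ell b_\ell$, which is of order $1/(cN)^2$; the left-hand side is $q_{N+1,k} - q_{N,k} = \log(k)^2 \left(\frac{1}{cN} - \frac{1}{c(N+1)}\right) = \frac{\log(k)^2}{c N(N+1)}$, which is of order $\log(k)^2/(cN^2)$. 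Dividing through by $1/(cN^2)$ (and absorbing the harmless $N/(N+1)$ factor, which tends to $1$ and costs only a slightly larger $N_0$), the inequality to be proved becomes, in the limit,
\[
\log(k)^2 \ \geq\ \frac{1}{2c} \sum_{\ell=1}^{k-1} \bigl(\log(\ell+1)^2 - \log(\ell)^2\bigr)\bigl(\log(k)^2 - \log(k-\ell)^2\bigr).
\]
The sum over $\ell$ telescopes partially: $\sum_{\ell=1}^{k-1}(\log(\ell+1)^2 - \log(\ell)^2)\log(k)^2 = \log(k)^4$, so after dividing by $\log(k)^2$ the condition reduces to
\[
1 \ \geq\ \frac{1}{2c}\left(\log(k)^2 - \frac{1}{\log(k)^2}\sum_{\ell=1}^{k-1}\bigl(\log(\ell+1)^2 - \log(\ell)^2\bigr)\log(k-\ell)^2\right).
\]
So it suffices to show that the quantity in parentheses is at most $2c$, uniformly over $2 \leq k \leq 150$. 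Since we may take $c$ as close to $\pi^2/3$ as we like but fixed, and $2c > 2\pi^2/3 \approx 6.58$, this is a \emph{finite} check: for each $k \in \{2,\dots,150\}$ one evaluates the explicit number $S(k) := \log(k)^2 - \log(k)^{-2}\sum_{\ell=1}^{k-1}(\log(\ell+1)^2 - \log(\ell)^2)\log(k-\ell)^2$ and confirms $S(k) \leq 2\pi^2/3$. (Intuitively $S(k)$ is a discrete analogue of $\int_0^1 ((\log k)(1-s))'$-type expressions and should stay bounded — indeed one expects $S(k) \to \pi^2/3$, which is exactly why the threshold is $c > \pi^2/3$ and why $K_0$ must be taken finite: the inequality becomes tight as $k \to \infty$.)

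\textbf{Main obstacle.} The genuine difficulty is twofold. First, one must make rigorous the passage from the exact inequality \eqref{rec:UB} to its asymptotic form: the discarded terms are lower order (ratios $1/(cN)$ and $N/(N+1)$), so one chooses $N_0$ large enough that the finite, strictly-positive slack $2\pi^2/3 - \sup_{k \leq 150} S(k)$ dominates these corrections for all $k \leq 150$ and all $N \geq N_0$ — this is where $N_0$ is pinned down, and it requires care because $S(k)$ must be bounded away from $2c$ \emph{uniformly} in the finite range. Second, the finite numerical verification $\sup_{2 \leq k \leq 150} S(k) < 2\pi^2/3$, while elementary, is the crux of why the constant $c = \pi^2/3$ appears; I would either tabulate $S(k)$ directly or, better, prove a clean monotonicity/convexity estimate showing $S(k)$ is increasing and bounded above by its $k\to\infty$ limit $\pi^2/3$ (via comparison of the sum with an integral and the identity $\sum_{n\geq 1} 1/n^2 = \pi^2/6$), which would simultaneously explain the constant and avoid a brute-force table.
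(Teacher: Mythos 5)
Your proposal is correct in outline but follows a genuinely different route from the paper. You verify the recurrence \eqref{rec:UB} \emph{directly} for the target array $q_{N,k}=1-\log(k)^2/(cN)$: for arrays of the form $1-a_j/N$ the inequality reduces (up to the harmless $N/(N+1)$ factor) to the polynomial condition $\sum_{\ell=1}^{k-1}(a_{\ell+1}-a_\ell)(a_k-a_{k-\ell})<2a_k$, which with $a_j=\log(j)^2/c$ is exactly your $S(k)<2c$, and you discharge it by a finite numerical check over $2\le k\le 150$. The paper instead introduces the extremal self-consistent sequence $b_k=1+\sqrt{1+d_k}$ (the positive root of $x^2-2x-d_k=0$), proves by induction on $k$ with a $\delta$-perturbation that $1-a_k/(N+N_0)$ is an upper bound for \emph{any} $a_k<b_k$, and then numerically verifies $b_k>3\log(k)^2/\pi^2$ for $k\le 150$. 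Both arguments ultimately rest on an unperformed-in-text tabulation of $150$ numbers, so yours is no less rigorous; it is arguably more direct, while the paper's $b_k$ machinery buys a large numerical margin for small $k$ (e.g.\ $b_2=2$ versus $3(\log 2)^2/\pi^2\approx 0.15$) and mirrors the sequence $a_k$ reused in the lower-bound section. Your needed numerical fact is indeed true: the paper's own Lemma~\ref{lem:lemadad} proves precisely $S(k)\le 2\pi^2/3$ for $k\ge 150$, and direct evaluation for small $k$ (e.g.\ $S(2)=(\log 2)^2$, $S(10)\approx 2.7$) shows the supremum over $k\le 150$ is comfortably below $2\pi^2/3\approx 6.58$. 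Two small corrections: your parenthetical limit is off by a factor of two --- $S(k)\to 2\pi^2/3$, not $\pi^2/3$ (which is consistent with your own threshold $2\pi^2/3$ and with the inequality becoming tight as $k\to\infty$); and the check must yield $S(k)\le 2\pi^2/3$ so that $S(k)<2c$ holds \emph{strictly} for every $c>\pi^2/3$, leaving the positive slack you need to absorb the $N/(N+1)$ correction uniformly over the finite range of $k$.
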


\begin{proof}
Note that for $k = 1$, we check directly that $q_{N,1} = 1 $ is an upper bound for $p_{N,1}$. Now, we define a sequence of constants $b_k, 1\leq k \leq 150$ as follows. Let $b_1 = 0$ and for $k > 1$, let $b_k$ be the positive root of the polynomial: 
\begin{equation}\label{eq:polynomialbk}
    b_k^2-2b_k-\big((b_2-b_1)b_{k-1}+(b_3-b_2)b_{k-2}+\cdots+(b_{k-1}-b_{k-2})b_2\big) = 0.
\end{equation}
Letting $d_k$ denote the constant term in \eqref{eq:polynomialbk}, that is,
\begin{equation*}
    d_k = (b_2-b_1)b_{k-1}+(b_3-b_2)b_{k-2}+\cdots+(b_{k-1}-b_{k-2})b_2,
\end{equation*}
we obtain
\begin{equation*}
    b_k = 1 + \sqrt{1+d_k}.
\end{equation*}
The sequence $b_{k}$ for $1\leq k \leq 150$ can be explicitly computed with $b_2 = 2$, and $b_3 = 1 + \sqrt{5}$. It can be verified numerically that for this range of values of $k$, 
\[
b_k > \frac{3\log(k)^2}{\pi^{2}}.
\] 
The statement of Lemma \ref{lem:Lemma2} will now follow once we prove that for any choice of $\{ a_{k}\}_{1 \leq k \leq 150}$ with $0 < a_k < b_k$, there exists $N_0 = N_{0}(\{ a_{k}\})$ such that  
\[ p_{N,k} \leq 1 - \frac{a_k}{N+N_0} \quad \text { for all } N\geq 1.
\] 

We prove this fact by induction on $k$. Let $k > 1$, and assume the result holds for $1,\dots,k-1$. Then, without loss of generality we can assume that there exists a $\delta > 0$, such that  $a_{\ell}=b_{\ell}-\delta$, for $1\leq \ell \leq k-1$.

Set
\begin{equation*}
    q'_{N,j} = 1 - \frac{a_j}{N} = 1 - \frac{b_j-\delta}{N}, 1 \leq j \leq k-1 \quad \text { and } \quad  q'_{N,k}=1-\frac{a_{k}}{N}.
\end{equation*}
If we can show that for this $k$, inequality \eqref{rec:UB} holds for the array $\{q_{N,j}'\}_{1 \leq j \leq k}$  and for large $N \geq N_1$, then it will follow by Remark \ref{rem:rem11} that 
\[ p_{N,k} \leq q_{N+N_{2},k} \quad \text { for all } N\geq 1, 
\] 
with $N_{2}=\max\{N_{0},N_{1}\}$, as desired.

Calculating the right side of inequality \eqref{rec:UB} for $q'_{N,k}$, we get
\[
\begin{split}
    \mathcal Z :&= \frac{1}{2N^2}\bigg((a_2-a_1)(a_{k-1}-a_k) + (a_3-a_2)(a_k-a_{k-2})+\cdots+(a_k-a_{k-1})(a_k-a_1)\bigg) \\
    &=\frac{1}{2N^2}\bigg(2a_k +(a_k^2-2a_k-((b_2-b_1)(b_{k-1}-\delta)+(b_3-b_2)(b_{k-2}-\delta)\\
    &\quad \quad \quad \quad \quad \quad \quad \quad \quad \quad \quad \quad +\cdots+(b_{k-1}-b_{k-2})(b_{2}-\delta)-(b_{k-1}-\delta)(b_1-\delta))\bigg) \\
&=\frac{1}{2N^{2}}(2a_{k} + \Psi(\delta)).
\end{split}
\]

Define
\begin{equation*}
    g_{k}(x) = x^2 - 2x - d_{k}.
\end{equation*}
Now, we claim that if we choose $\delta$ small enough, we have $\Psi(\delta)<0$.
Indeed, $\Psi$ is continuous, and $\Psi(0) = g_{k}(a_k) < 0$
 since $g_{k}$ is a quadratic polynomial with positive first term, and $a_k$ lies between the two roots of $g_{k}$ by assumption.  Choosing $\delta$ such that $\Psi(\delta)<0$, and setting $\Psi(\delta) = -2\varepsilon < 0$ for some $\varepsilon>0$ then
\begin{equation*}
    \mathcal Z = \frac{1}{2N^2}(2a_k - 2\varepsilon) = \frac{1}{N^2}(a_k - \varepsilon) \leq \frac{a_k}{N(N+1)} =  q'_{N+1,k}-q'_{N,k}.
\end{equation*}
Here, the inequality holds for large $N \geq N_1$. Thus, inequality \eqref{rec:UB} holds for $q_{N,k}'$.  If necessary we make $N_1$ larger so that $q'_{N_1,k} \geq 0$. This ends the proof of Lemma \ref{lem:Lemma2}. 
\end{proof}

\subsection{Proof of Proposition \ref{prop:upperbound}}

Fix $\delta>0$ and set $C= (1+\delta) \pi^{2}/3$. We will soon choose $\beta=\beta(\delta)>0$ and define  
\begin{equation}\label{def:qnk}
q_{N,k} =
\begin{cases} 
   1-\frac{\log(k)^2}{NC} &\text{ if } \log k < \sqrt{NC+\beta^2} - \beta, \\
      (\frac{2\beta\sqrt{NC+\beta^2}-2\beta^2}{NC})e^{-\beta^{-1}(\log k - (\sqrt{NC+\beta^2} - \beta))} &\text{ if } \log k \geq \sqrt{NC+\beta^2} - \beta.
\end{cases}
\end{equation}
Our goal in this section is to show that for this choice of $q_{N,k}$ we have some $N_{0} \geq 1$ such that
\begin{equation}\label{eq:upperbounda?}
p_{N,k} \leq q_{N_{0}+N,k}, \quad \text{ for all } N\geq 1, k\geq 1.
\end{equation}

Note that by Lemma \ref{lem:Lemma2}, we know that \eqref{eq:upperbounda?} holds for $k\leq 150$ and all $N\geq 1$. In what follows, we will always take $N\geq N_{0}$ and increase the choice of $N_{0}$ when necessary without mentioning. This will happen a finite number of times. Last, we will say that the pair $(N,k)$ (or the variable $q_{N,k}$) is {\it from model $1$} if  \[\log k < \sqrt{NC+\beta^2} - \beta.\]Otherwise, we say that $(N,k)$ is {\it from model} $2$. 

We will break the proof of \eqref{eq:upperbounda?} into three cases: $(N,k)$ and $(N+1,k)$ are from model 1, $(N,k)$ and $(N+1,k)$ are both from model 2, or $(N,k)$ is from model $2$ and $(N+1,k)$ is from~$1$. 
\begin{lemma}\label{lem:lemadad}
If $\log(k) < \sqrt{NC+\beta^2} - \beta$, the recurrence \eqref{rec:UB} is satisfied for $q_{N,k}$ as in~\eqref{def:qnk}. Furthermore, we have 
\begin{equation}\label{eq:extralemma3}
q_{N+1,k} - q_{N,k} - \frac{1}{2} \sum_{\ell =1}^{k-1} (q_{N,\ell}-q_{N,\ell+1})(q_{N,k-\ell}-q_{N,k}) \geq \frac{\gamma(C)(\log k)^{2}}{N^{2}},
\end{equation}
where $ \gamma(C) >0$ for all $C>\pi^{2}/3.$

\end{lemma}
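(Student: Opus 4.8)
## Proof proposal for Lemma~\ref{lem:lemadad}

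The plan is to substitute the model-1 formula $q_{N,k} = 1 - (\log k)^2/(NC)$ directly into the recurrence and track the leading-order behaviour in $1/N$. First I would compute the left-hand increment $q_{N+1,k} - q_{N,k}$: since $q_{N,k}$ depends on $N$ only through the factor $1/N$, this difference equals $(\log k)^2 \bigl(\frac{1}{NC} - \frac{1}{(N+1)C}\bigr) = \frac{(\log k)^2}{C N(N+1)}$, which is $\frac{(\log k)^2}{CN^2} + O(N^{-3})$. The main work is then to estimate the quadratic sum $S := \frac{1}{2}\sum_{\ell=1}^{k-1}(q_{N,\ell}-q_{N,\ell+1})(q_{N,k-\ell}-q_{N,k})$. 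Writing $q_{N,\ell}-q_{N,\ell+1} = \frac{1}{NC}\bigl((\log(\ell+1))^2 - (\log \ell)^2\bigr)$ and $q_{N,k-\ell}-q_{N,k} = \frac{1}{NC}\bigl((\log k)^2 - (\log(k-\ell))^2\bigr)$, the sum is exactly $\frac{1}{2 N^2 C^2}\sum_{\ell=1}^{k-1}\bigl((\log(\ell+1))^2-(\log\ell)^2\bigr)\bigl((\log k)^2 - (\log(k-\ell))^2\bigr)$. So both sides of \eqref{eq:extralemma3} are of order $N^{-2}$, and the inequality reduces to showing that
\[
\frac{(\log k)^2}{C} - \frac{1}{2C^2}\sum_{\ell=1}^{k-1}\bigl((\log(\ell+1))^2-(\log\ell)^2\bigr)\bigl((\log k)^2 - (\log(k-\ell))^2\bigr) \geq \gamma(C)(\log k)^2
\]
for some $\gamma(C)>0$, up to lower-order corrections that I would absorb by enlarging $N_0$.

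The heart of the matter is therefore the asymptotic evaluation of the discrete sum. I would approximate it by the integral $\int_0^{\log k} 2s \cdot \bigl((\log k)^2 - s^2\bigr)\,ds$ after the substitution $s = \log \ell$ (so that $(\log(\ell+1))^2 - (\log\ell)^2 \approx 2\log\ell \cdot \frac{1}{\ell}$ and the sum over $\ell$ becomes a Riemann sum in $s$), using $(\log(k-\ell))^2 \approx (\log k)^2 - $ (correction) only where $\ell$ is comparable to $k$; the dominant contribution comes from $\ell$ of order $k$ where $\log(k-\ell)$ can be far from $\log k$. Carrying this out gives $\int_0^{L} 2s(L^2 - s^2)\,ds = L^4 - \tfrac{L^4}{2} = \tfrac{1}{2}L^4$ with $L = \log k$; hence the sum is asymptotically $\frac{1}{2N^2C^2}\cdot\frac12 (\log k)^4 \cdot$ (a factor I must pin down carefully). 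Plugging $C = \pi^2/3$ as the threshold, the coefficient of $(\log k)^2/N^2$ on the left should come out to exactly $\frac{1}{C} - \frac{1}{C}\cdot$(something that equals $1$ at $C=\pi^2/3$), so that the bracket vanishes precisely at $C = \pi^2/3$ and is strictly positive — giving a positive $\gamma(C)$ — for all $C > \pi^2/3$. Monotonicity of this expression in $C$ then yields $\gamma(C)>0$ on the whole range, and in particular the recurrence \eqref{rec:UB} itself follows since the right-hand side of \eqref{eq:extralemma3} is non-negative.

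The main obstacle I anticipate is making the integral approximation of the sum rigorous with explicit, controllable error terms, rather than just asymptotic ones: I need the inequality to hold for all $k$ with $\log k < \sqrt{NC+\beta^2}-\beta$ simultaneously, not merely in a $k\to\infty$ limit, so I must bound the difference between the sum and its integral approximation uniformly — most delicately in the regime where $\ell$ is close to $k$ and $\log(k-\ell)$ varies rapidly, and in the small-$k$ regime where the continuum approximation is crudest (though there Lemma~\ref{lem:Lemma2} already covers $k \le 150$, so I only need $k > 150$). A secondary point requiring care is the bookkeeping of the $O(N^{-3})$ terms arising both from $q_{N+1,k}-q_{N,k} = \frac{(\log k)^2}{CN(N+1)}$ versus $\frac{(\log k)^2}{CN^2}$ and from higher-order expansions of the logarithm differences; since the target bound is already $\Theta(N^{-2})$, these are genuinely lower-order and can be handled by choosing $\gamma(C)$ slightly smaller than the exact leading constant and taking $N_0$ large, but the estimates must be written out to confirm the dependence on $k$ does not spoil uniformity.
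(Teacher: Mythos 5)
Your proposal correctly sets up the reduction (the increment $q_{N+1,k}-q_{N,k}=\frac{(\log k)^2}{CN(N+1)}$, the exact expression for the quadratic sum, and the fact that everything lives at scale $N^{-2}$), but the central asymptotic evaluation is wrong, and the error is not a constant factor but a power of $\log k$. The integral $\int_0^{\log k}2s\bigl((\log k)^2-s^2\bigr)\,ds=\tfrac12(\log k)^4$ arises from conflating $\log(k-\ell)$ with $\log\ell$: the second factor in the sum is $(\log k)^2-(\log(k-\ell))^2$, which for small $\ell$ is only about $2\ell\log k/k$, not the enormous $(\log k)^2-(\log\ell)^2$ your Riemann sum uses. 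The true order of $S:=\sum_{\ell}(\log(\ell+1)^2-\log(\ell)^2)((\log k)^2-\log(k-\ell)^2)$ is $\Theta((\log k)^2)$ — indeed $S\to \tfrac{2\pi^2}{3}(\log k)^2$ — not $\Theta((\log k)^4)$. Your own later assertion that the bracket vanishes exactly at $C=\pi^2/3$ silently presupposes the $(\log k)^2$ answer and contradicts your integral; and if the sum really were of order $(\log k)^4/N^2$, the lemma would be false in the relevant regime $\log k\sim\sqrt{NC}$, where $(\log k)^4/N^2$ is $O(1)$ while the available increment is $O(1/N)$. So the heart of the proof is missing, not merely unrigorous.

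The paper closes this gap without any integral approximation. One bounds $\log(\ell+1)^2-\log(\ell)^2\le \frac{2\log\ell}{\ell}$ (valid for $\ell\ge 3$ by concavity of $(\log x)^2$, with the $\ell=1,2$ terms and the negative corrections $e_\ell$ handled separately via the numerical fact $\sum_{\ell=3}^{120}2\ell e_\ell<-7$) and $(\log k)^2-\log(k-\ell)^2\le -2\log k\,\log\frac{k-\ell}{k}$, reducing to $S\le 4(\log k)^2\,h(k)$ with $h(k)=\sum_{\ell=1}^{k-1}\frac{1}{\ell}\log\frac{k}{k-\ell}$. The key input is then the \emph{exact, uniform} bound $h(k)\le \pi^2/6$ for all $k$ (Lemma \ref{lem:calculuseries}), proved by expanding the logarithm, swapping the order of summation, and comparing the $a$-th term with $1/a^2$. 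This gives $X\le \frac{\pi^2(\log k)^2}{3C^2N^2}$, which is below $\frac{(\log k)^2}{CN(N+1)}$ exactly when $C>\pi^2/3$, with the surplus $\gamma(C)(\log k)^2/N^2$. If you want to salvage your route, you would need to replace your integral by the correct one, $4(\log k)^2\int_0^1\frac{-\log(1-x)}{x}\,dx=4(\log k)^2\cdot\frac{\pi^2}{6}$, and then still prove a uniform-in-$k$ version of this bound — which is precisely what the paper's series argument delivers.
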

In the proof of the above lemma we will need the following fact.
\begin{lemma}\label{lem:calculuseries}
Let \[h(k)= \sum_{\ell=1}^{k-1} \frac{1}{\ell}\log\left(\frac{k}{k-\ell}\right).\]
Then $h(k) \leq \frac{\pi^2}{6}$ for all $k \geq 1$ and 
\[\lim_{k\to \infty} h(k) = \frac{\pi^{2}}{6}.\]
\end{lemma}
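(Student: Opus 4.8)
The plan is to analyze $h(k)=\sum_{\ell=1}^{k-1}\frac1\ell\log\!\bigl(\frac{k}{k-\ell}\bigr)$ in two parts: first show it is monotone nondecreasing in $k$, then identify the limit as $\pi^2/6$ by recognizing a dilogarithm identity. For monotonicity, I would write $h(k+1)-h(k)$ and show it is nonnegative term by term. Rewriting $\log\frac{k}{k-\ell}=-\log(1-\ell/k)=\sum_{m\geq 1}\frac{1}{m}(\ell/k)^m$, we get the double-series representation
\[
h(k)=\sum_{\ell=1}^{k-1}\sum_{m=1}^{\infty}\frac{\ell^{m-1}}{m\,k^{m}},
\]
and after swapping the order of summation the inner sum over $\ell$ from $1$ to $k-1$ of $\ell^{m-1}$ is a quantity that, divided by $k^m$, increases with $k$ (it approaches $\frac1m$ from below, by comparison of the Riemann sum $\frac1k\sum_{\ell=1}^{k-1}(\ell/k)^{m-1}$ with $\int_0^1 x^{m-1}\,dx=\frac1m$ and the fact that $x^{m-1}$ is nondecreasing on $[0,1]$). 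This simultaneously gives monotonicity in $k$ and the bound $h(k)\leq\sum_{m=1}^\infty\frac{1}{m^2}=\frac{\pi^2}{6}$, since each summand $\frac1m\cdot\frac1{k^m}\sum_{\ell=1}^{k-1}\ell^{m-1}$ is bounded above by $\frac{1}{m^2}$.

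For the limit, the cleanest route is dominated convergence on the double series just obtained: for each fixed $m$, $\frac{1}{k^m}\sum_{\ell=1}^{k-1}\ell^{m-1}\to\frac1m$ as $k\to\infty$ (standard power-sum asymptotics), the terms are dominated by $\frac{1}{m^2}$ which is summable, so $h(k)\to\sum_{m\geq 1}\frac{1}{m^2}=\frac{\pi^2}{6}$. Alternatively one can substitute $\ell=k-j$ to rewrite $h(k)=\sum_{j=1}^{k-1}\frac{1}{k-j}\log(k/j)$ and recognize the limit as $\int_0^1\frac{-\log x}{1-x}\,dx=\sum_{m\geq 1}\frac{1}{m^2}=\zeta(2)$ via the series expansion of $\frac1{1-x}$; the Riemann-sum comparison makes this rigorous, but the double-series argument is self-contained and avoids error-term bookkeeping.

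The only mild obstacle is justifying the interchange of summation order and the passage to the limit: both are handled by the fact that all terms $\frac{\ell^{m-1}}{m\,k^m}$ are nonnegative (so Tonelli applies to the double sum freely) and uniformly dominated by $\frac{1}{m^2}$ (so dominated convergence applies in $k$). Everything else — the power-sum asymptotic $\sum_{\ell=1}^{k-1}\ell^{m-1}\sim k^m/m$ and the evaluation $\sum_{m\geq1}m^{-2}=\pi^2/6$ — is classical. I expect the whole argument to be short once the double-series form is written down.
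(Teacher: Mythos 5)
Your argument is correct and is essentially the paper's own proof: both expand $\log\frac{k}{k-\ell}$ into its power series, interchange the order of summation by positivity, bound $\sum_{\ell=1}^{k-1}\ell^{m-1}$ by $k^m/m$ to obtain $h(k)\leq\sum_{m\geq 1}m^{-2}=\pi^2/6$, and deduce the limit by dominated convergence using the power-sum asymptotics. The side remark about monotonicity of $h(k)$ in $k$ is not needed for either assertion of the lemma, so nothing hinges on it.
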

\begin{proof}
First, by Taylor's theorem,
\begin{equation*}
    h(k) = \sum_{\ell=1}^{k-1} \frac{1}{\ell}\bigg(\frac{\ell}{k}+\frac{\ell^2}{2k^2}+\frac{\ell^3}{3k^3} + \cdots\bigg).
\end{equation*}
Switching the order of summation (since all terms are positive), we obtain 
\begin{equation}\label{eq:lemmacalc}
    h(k) = \sum_{a = 1}^{\infty}\frac{1}{ak^a}\sum_{\ell=1}^{k-1}\ell^{a-1}.
\end{equation}
As
\begin{equation*}
    \sum_{\ell=1}^{k-1}\ell^{a-1} \leq \int_1^kx^{a-1}dx \leq \frac{k^a}{a},
\end{equation*}
we have
\begin{equation*}
    h(k) \leq \sum_{a = 1}^{\infty}\frac{1}{a^2} = \frac{\pi^2}{6},
\end{equation*}
which shows the first fact.  The second fact follows directly by the dominated convergence theorem. Indeed, by Faulhaber's generalized formula for sum of powers \cite[Page 106]{Conway}, we have
\begin{equation*}
     \sum_{\ell=1}^{k-1}\ell^{a-1} = \frac{k^a}{a} + r(k),
\end{equation*}
where $r(k)$ is a polynomial of degree $(a-1)$. Therefore, the $a^{th}$ term in \eqref{eq:lemmacalc} is given by
\begin{equation*}
    \sum_{\ell=1}^{k-1}\frac{\ell^{a-1}}{ak^a} = \frac{1}{a^2} + \frac{r(k)}{ak^a} \to_{k \to \infty} \frac{1}{a^2}.
\end{equation*}
\end{proof}
\begin{proof}[Proof of Lemma \ref{lem:lemadad}]

In this case, all $q_{N,j},$ for $ j \leq k$ and $q_{N+1,k}$ are from model $1$. Thus 
\begin{equation}\label{lemmagoal}
q_{N+1,k}-q_{N,k} = \frac{(\log k)^2}{CN(N+1)} \geq \frac{3(\log k)^2}{(1+\delta)\pi^{2}N^2} 
\end{equation}
for large $N$.
Set
\begin{equation*}
    X = \frac{1}{2}\sum_{\ell=1}^{k-1} (q_{N,\ell}-q_{N,\ell +1})(q_{N,k-\ell}-q_{N,k}),
\end{equation*}
so that
\begin{equation}\label{eq:Xlemadad}
\begin{split}
    S:=2C^2N^2X =  \sum_{\ell=1}^{k-1} (\log(\ell+1)^2 - \log(\ell)^2)(\log(k)^2 - \log(k-\ell)^2). 
\end{split}
\end{equation}
Set  \[e_\ell = \log(\ell+1)^2-\log(\ell)^2 - 2\frac{\log(\ell)}{\ell}. \]
Since $\frac{\partial^{2}}{\partial x^{2}}(\log x)^{2} <0$ for all $x\geq3$, $e_{\ell}<0$ for $\ell\geq 3$. Thus, as $\max \{ (\log 2)^{2}, (\log 3)^{2}-(\log 2) \} \leq 1$,

\begin{equation*}
\begin{split}
    S &\leq \sum_{\ell=3}^{k-1} \bigg(\frac{2\log\ell}{\ell}\bigg)((\log k)^2 - \log(k-\ell)^2) + 2(\log k)^2 - \log(k-1)^2  - \log(k-2)^2 \\
    &+ \sum_{\ell=3}^{120}e_\ell((\log k)^2 - \log(k-\ell)^2)\\
    & \leq \sum_{\ell=3}^{k-1} \bigg(\frac{2\log(\ell)}{\ell}\bigg)((\log k)^2 - \log(k-\ell)^2) +  \frac{6\log(k-2)}{k-2}+ \sum_{\ell=3}^{120} e_{\ell} \frac{2 \ell \log k}{k}\\
    &\leq  \sum_{\ell=3}^{k-1} \bigg(\frac{2\log(\ell)}{\ell}\bigg)((\log k)^2 - \log(k-\ell)^2)+ \bigg(7 + \sum_{\ell=3}^{120}2\ell e_\ell\bigg)\frac{\log(k)}{k}. 
    \end{split}
\end{equation*}
A direct computation shows that $\sum_{\ell=3}^{120}2\ell e_\ell < -7$, so the sum above is bounded by 
\[
 \sum_{\ell=3}^{k-1} \bigg(\frac{2\log(\ell)}{\ell}\bigg)((\log k)^2 - \log(k-\ell)^2).
\] 
Using that 
\[ 
 (\log k)^2-\log(k-\ell)^2 = ( \log k)^2 - \bigg(\log k + \log\left(\frac{k-\ell}{k}\right)\bigg)^{2} \leq -2\log(k)\log\left(\frac{k-\ell}{k}\right),
\]
we conclude that for $k \geq 150$,
\[ 
\begin{split}
  S &\leq -\sum_{\ell=3}^{k-1} \frac{2\log \ell }{\ell} \bigg(2\log k \log\left(\frac{k-\ell}{k}\right)\bigg) \\
  &\leq 4(\log k)^2\sum_{\ell=3}^{k-1} \frac{1}{\ell}\log\left(\frac{k}{k-\ell}\right) \leq 4 (\log k)^{2} \frac{\pi^{2}}{6}, \\ 
  \end{split}
\]
where in the last line we used Lemma \ref{lem:calculuseries}. As a result, we obtain from \eqref{lemmagoal} and \eqref{eq:Xlemadad} that 
\begin{equation}\label{eq:differenceLem3}
X \leq  (\log k)^{2} \frac{\pi^{2}}{3C^{2}N^{2}} \leq  \frac{(\log k)^{2}}{(1+\delta)CN^{2}} \leq \frac{(\log k)^2}{CN(N+1)} =q_{N+1,k} - q_{N,k}, 
\end{equation}
so inequality \eqref{rec:UB} holds for this case.
To check \eqref{eq:extralemma3} just take the difference in \eqref{eq:differenceLem3}.
\end{proof}

The next lemma deals with the intermediate case.
\begin{lemma}\label{lem:middle}
Inequality \eqref{rec:UB} holds when 
\begin{equation*}\label{eq:midcase}
 \sqrt{NC+\beta^2} - \beta \leq  \log k < \sqrt{(N+1)C+\beta^2} - \beta.  
\end{equation*}
\end{lemma}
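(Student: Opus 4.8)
The plan is to treat the short initial stretch in which the array still follows model~$2$ as a negligible perturbation of the ``pure model~$1$'' array, so that the desired inequality collapses onto the estimate already proved in Lemma~\ref{lem:lemadad}. Write $s_M=\sqrt{MC+\beta^2}$, so that $(M,k)$ is from model~$2$ exactly when $\log k\geq s_M-\beta$; the hypothesis of the lemma is $s_N-\beta\leq\log k<s_{N+1}-\beta$, so $(N,k)$ is from model~$2$ while $(N+1,k)$ is from model~$1$, whence $q_{N+1,k}=1-(\log k)^2/((N+1)C)$. Since the right-hand side of \eqref{rec:UB} equals $f(q_{N,1},\dots,q_{N,k})-q_{N,k}$ by \eqref{def:mainfunction}, it suffices to prove $q_{N+1,k}\geq f(q_{N,1},\dots,q_{N,k})$. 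Fix $N$ and set $\widehat q_{j}:=1-(\log j)^2/(NC)$ for $1\leq j\leq k$. For $N\geq N_0$ one has $\widehat q_{\cdot}\in\mathcal S_{k}$ (the only point to check, $\widehat q_{k}\geq 0$, follows from $(\log k)^2<(s_{N+1}-\beta)^2\leq NC$, valid once $2\beta s_{N+1}\geq C+2\beta^2$), and $q_{N,j}\geq\widehat q_{j}$ for every $j\leq k$, with equality whenever $\log j<s_N-\beta$: as functions of $x=\log j$ the two branches of $q_{N,\cdot}$, namely $1-x^2/(NC)$ and $\tfrac{2\beta(s_N-\beta)}{NC}e^{-(x-(s_N-\beta))/\beta}$, agree to first order at $x=s_N-\beta$ (using $NC=(s_N-\beta)(s_N+\beta)$), so the former, being concave, stays below the common tangent line while the latter, being convex, stays above it. With $A:=q_{N+1,k}-\widehat q_{k}$, $X:=f(\widehat q_{\cdot})-\widehat q_{k}$ and $E:=f(q_{N,\cdot})-f(\widehat q_{\cdot})$, we have $q_{N+1,k}-f(q_{N,\cdot})=A-X-E$; a direct computation gives $A=(\log k)^2/(CN(N+1))$, and $E\geq 0$ by monotonicity of $f$ on $\mathcal S_k$ (so $E$ can only hurt us and must be bounded above).

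Next I bound $X$ by citing Lemma~\ref{lem:lemadad}. The quantity $X$ is literally the algebraic expression estimated in that proof, evaluated at $\widehat q_{j}=1-(\log j)^2/(NC)$, and since $k\geq e^{s_N-\beta}>150$ for $N\geq N_0$ the same computation yields $X\leq\pi^2(\log k)^2/(3C^2N^2)$. Using $C=(1+\delta)\pi^2/3$,
\[
A-X\ \geq\ \frac{(\log k)^2}{CN^2}\left(\frac{N}{N+1}-\frac{1}{1+\delta}\right),
\]
and since $(\log k)^2\geq(s_N-\beta)^2=NC-2\beta(s_N-\beta)\geq NC/2$ for $N\geq N_0$, once $N$ is large enough that $\tfrac{N}{N+1}\geq\tfrac12\bigl(1+(1+\delta)^{-1}\bigr)$ we obtain $A-X\geq c_\delta/N$ for a constant $c_\delta=c_\delta(\delta)>0$.

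It remains to show $E=o(1/N)$. The segment $x(t)=\widehat q_{\cdot}+t(q_{N,\cdot}-\widehat q_{\cdot})$, $t\in[0,1]$, lies in the convex set $\mathcal S_k$, so by the mean value theorem $E=\sum_{j=1}^{k}(q_{N,j}-\widehat q_{j})\int_0^1(\partial f/\partial x_j)(x(t))\,dt$. By \eqref{eq:PD}, on $\mathcal S_k$ each $\partial f/\partial x_j$ lies in $[0,1]$ and $\sum_{j=1}^{k}\partial f/\partial x_j=(x_1-x_k)+(1-x_1+x_k)=1$, hence $E\leq\max_{j\leq k}(q_{N,j}-\widehat q_{j})$. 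The differences vanish unless $\log j\geq s_N-\beta$; for such $j$, putting $v=\log j-(s_N-\beta)\in[0,\,s_{N+1}-s_N)$ and using $NC=(s_N-\beta)(s_N+\beta)$ together with $e^{-v/\beta}\leq 1-v/\beta+v^2/(2\beta^2)$,
\[
q_{N,j}-\widehat q_{j}=\frac{2\beta(s_N-\beta)\bigl(e^{-v/\beta}-1\bigr)+2(s_N-\beta)v+v^2}{NC}\ \leq\ \frac{v^2\bigl((s_N-\beta)/\beta+1\bigr)}{NC}.
\]
Because the band is thin, $s_{N+1}-s_N=C/(s_{N+1}+s_N)\leq\tfrac12\sqrt{C/N}$, so $v^2\leq C/(4N)$, and with $s_N-\beta\leq\sqrt{NC+\beta^2}\leq 2\sqrt{NC}$ (for $N\geq N_0$) this gives $q_{N,j}-\widehat q_{j}\leq(\sqrt C/\beta)\,N^{-3/2}$ uniformly. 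Hence $E\leq(\sqrt C/\beta)\,N^{-3/2}$, and combining,
\[
q_{N+1,k}-f(q_{N,\cdot})=A-X-E\ \geq\ \frac{c_\delta}{N}-\frac{\sqrt C}{\beta}\,N^{-3/2}\ \geq\ 0
\]
for $N\geq N_0$ after a final enlargement of $N_0$, proving \eqref{rec:UB} in the intermediate case.

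The structure is thus: cancel $q_{N,k}$, split off the model-$1$ part, invoke Lemma~\ref{lem:lemadad}, and control the model-$2$ residue. The only place the strict inequality $C>\pi^2/3$ enters is the bound $A-X=\Omega(1/N)$. The main obstacle — and the only genuinely new estimate — is the uniform bound $q_{N,j}-\widehat q_{j}=O(N^{-3/2})$: this is exactly where the thinness of the transition band (width $O(1/\sqrt N)$ in the variable $\log k$) is decisive, since it forces the discrepancy between the two models to be of smaller order than the $\Omega(1/N)$ slack created by $C>\pi^2/3$. A small amount of care is also needed with the single boundary index separating the two models, but there $\partial f/\partial x_j$ is only decreased, so the bound on $E$ is unaffected.
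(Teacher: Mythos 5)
Your argument is correct and follows essentially the same route as the paper's proof: both treat the model-2 entries as a perturbation of the pure model-1 array, invoke the $\Omega\big((\log k)^2/N^2\big)$ breathing room of Lemma \ref{lem:lemadad}, bound the perturbation of $f$ by (a constant times) the maximal discrepancy between the two models, and show that discrepancy is $O(N^{-3/2})$ because the transition band has width $O(N^{-1/2})$ in $\log k$. The only difference is how that maximal discrepancy is estimated --- you use the first-order tangency of the two branches at $\log j=\sqrt{NC+\beta^2}-\beta$ together with a second-order Taylor bound, while the paper uses concavity of the difference $g$ and evaluates it at the two endpoints of the band --- and both give the same $O(N^{-3/2})$ bound.
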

\begin{proof}
In this case we have that $(N+1,k)$ is from model 1, but $(N,k)$ is from model 2.  Our strategy is to recall that inequality $\eqref{rec:UB}$ holds when all variables are sourced from model 1, as in Lemma \ref{lem:lemadad} and then change  the variables $q_{N,k-\ell}$ to model 2, for $\ell$ such that
\[
\log(k-\ell) \geq \sqrt{NC+\beta^2} - \beta.
\]
Let $L$ be the largest of such $\ell$'s. Note that $L\leq k/3.$
Define
\begin{equation*}
    q_{N,\ell}^{(1)} = 1 - \frac{(\log \ell)^2}{NC},
\end{equation*}
\begin{equation*}
    q_{N,\ell}^{(2)} = (\frac{2\beta\sqrt{NC+\beta^2}-2\beta^2}{NC})e^{-\beta^{-1}(\log \ell - (\sqrt{NC+\beta^2} - \beta))}, \quad \text{ for } 0\leq k-\ell\leq L,
\end{equation*}
$q_{N,\ell}^{(2)}=q_{N,\ell}^{(1)}$ for $k-\ell>L$, and set \[
Y^{(i)} = f(q_{N,0}^{(i)},\ldots, q_{N,k}^{(i)}),\]
for $i=1,2$ and $f$ given in \eqref{def:mainfunction}.

By \eqref{eq:extralemma3}
\begin{equation*}
    q_{N+1,k} - Y^{(1)} > \gamma(C)\frac{\log(k)^2}{N^2} \geq \frac{\gamma(C)}{2N},
\end{equation*}
for $N$ large enough.
As a result, it suffices to show
\begin{equation*}
    Y^{(2)} - Y^{(1)} \leq \frac{\gamma(C)}{2N}.
\end{equation*}
We bound $Y^{(2)} - Y^{(1)}$ by considering the changes due to each variable $q_{N,k-\ell}$ for $0 \leq \ell \leq L$,
\begin{equation*}
    Y^{(2)} - Y^{(1)} \leq \sum_{\ell=0}^{L}(q_{N,k-\ell}^{(2)} - q_{N,k-\ell}^{(1)})\frac{\partial f}{\partial q_{N,k-\ell}}(\xi_{\ell}), 
\end{equation*}
for some $q_{N,k-\ell}^{(1)} \leq \xi_{\ell} \leq q_{N,k-\ell}^{(2)}$.
Using \eqref{eq:PD}, we have
\[
\begin{split}
    Y^{(2)} - Y^{(1)} &\leq \sum_{\ell=0}^{L}(q_{N,k-\ell}^{(2)} - q_{N,k-\ell}^{(1)})\frac{\partial f}{\partial q_{N,k-\ell}}(\xi_{\ell}) \leq \max_{\ell=0}^L (q_{N,k-\ell}^{(2)} - q_{N,k-\ell}^{(1)})\sum_{\ell=0}^{L}\frac{\partial f}{\partial q_{N,k-\ell}}(\xi_{\ell})
     \\  &\leq\max_{\ell=0}^L (q_{N,k-\ell}^{(2)} - q_{N,k-\ell}^{(1)})(1+\sum_{\ell=1}^{L} \xi_{k-\ell} - \xi_{k-\ell-1}) \leq 2\max_{\ell=0}^L (q_{N,k-\ell}^{(2)} - q_{N,k-\ell}^{(1)}).
\end{split}
\]
We now end the proof of Lemma \ref{lem:middle} by claiming that for $N$ large enough
\[
\max_{\ell=0}^L (q_{N,k-\ell}^{(2)} - q_{N,k-\ell}^{(1)}) \leq \frac{\gamma(C)}{4N}.
\]
Indeed, by making the change of variables $m=\log (k-\ell)$, we see that computing $\max_{\ell=0}^L (q_{N,k-\ell}^{(2)} - q_{N,k-\ell}^{(1)})$ is equivalent to minimizing 
\begin{equation*}
g(m) = 1 - \frac{m^2}{CN} - \frac{2\beta\sqrt{NC+\beta^2}-2\beta^2}{NC}e^{-\alpha_1(m - (\sqrt{NC+\beta^2} - \beta))},
\end{equation*}
 over the range $\sqrt{NC+\beta^2} -\beta \leq m < \sqrt{(N+1)C + \beta^2} - \beta$. Thus, it suffices to show 
 \[ 
\min_{\sqrt{NC+\beta^2} -\beta \leq m < \sqrt{(N+1)C + \beta^2}-\beta}  g(m) \geq -  \frac{\gamma(C)}{4N}.
 \]
 
Notice that $g$ is concave with 
$$g''(m) = \frac{-2}{CN} - \frac{2\beta\alpha_1^2\sqrt{NC+\beta^2}-2\beta^2}{NC}e^{-\alpha_1(m - (\sqrt{NC} - \epsilon))} < 0.$$
As a result, it's minimum over the range $\sqrt{NC+\beta^2} - \beta \leq m \leq \sqrt{(N+1)C+\beta^2} - \beta$ must occur at one of the two endpoints.
That is, for all $m$, 
\[
g(m) \geq \min\{g(\sqrt{NC+\beta^2} - \beta),g(\sqrt{(N+1)C+\beta^2} - \beta)\}.
\] 
Now, we calculate the value of $g$ at these two endpoints. At the first endpoint, the two models are by design chosen to be equal, that is,
\begin{equation*}
g(\sqrt{NC+\beta^2} - \beta) = 1 - \frac{NC - 2\beta\sqrt{NC+\beta^2} + 2\beta^2}{NC} - \frac{2\beta\sqrt{NC+\beta^2}-2\beta^2}{NC} = 0.
\end{equation*}
At the second endpoint, we have
\begin{equation*}
\begin{split}
    g(&\sqrt{(N+1)C + \beta^2} - \beta)\\ 
    &\geq1 - \frac{(N+1)C - 2\beta\sqrt{(N+1)C+\beta^2} + 2\beta^2}{NC}- \frac{2\beta\sqrt{NC+\beta^2}-2\beta^2}{NC}e^{-\frac{C\alpha}{2\sqrt{(N+1)C+\beta^{2}}}} \\
    &\geq -\frac{1}{N} + \frac{-2\beta^2}{NC} + \frac{2\beta\sqrt{(N+1)C+\beta^2}}{NC}
    \\ &\quad - \frac{2\beta\sqrt{NC+\beta^2}-2\beta^2}{NC}\bigg(1 - \frac{\alpha_1C}{2\sqrt{(N+1)C+\beta^2}} + O\bigg(\frac{1}{N}\bigg)\bigg)  
    \\ &\geq \frac{-1}{N}+\frac{-2\beta^2}{NC} + \frac{2\beta^2}{NC} + \frac{2\beta\sqrt{(N+1)C+\beta^2}}{NC} - \frac{2\beta\sqrt{NC+\beta^2}}{NC} 
    \\\ &\quad +  
    \frac{\sqrt{NC+\beta^2}}{n\sqrt{(N+1)C+\beta^2}} + O\bigg(\frac{1}{N^{3/2}}\bigg)  \\ 
    &\geq \frac{-1}{N}(|2\beta^2/C-1|\frac{C}{\sqrt{NC+\beta^2}}) +O\left(\frac{1}{N^{3/2}}\right) = O\left(\frac{1}{N^{3/2}}\right) \geq  \frac{-\gamma(C)}{4N},
\end{split}
\end{equation*}
for $N$ large enough.  Thus, we have the desired bound for $g(m)$ and this concludes that recurrence \eqref{rec:UB} is satisfied in this case.
\end{proof}

We now deal with the case when both $(N,k)$ and $(N+1,k)$ are from model $2$, that is, $\log k \geq \sqrt{(N+1)C+\beta^2} - \beta$. Let $\alpha = \beta^{-1}$.
In this case, we can write 
\begin{equation}\label{def:theta}
q_{N,k} = \frac{2\beta(\sqrt{NC+\beta^2}-2\beta^2)}{CN}e^{\alpha(\sqrt{NC+\beta^2}-\beta)}\frac{1}{k^\alpha} = \theta_{N}\frac{1}{k^{\alpha}},
\end{equation}
for some constant $\theta_{N}$ that does not depend on $k$.
\begin{lemma}\label{lem:tailmodelUB}
Inequality \eqref{rec:UB} holds when 
\begin{equation*}
 \sqrt{(N+1)C+\beta^2} - \beta \leq \log k.   
\end{equation*}
\end{lemma}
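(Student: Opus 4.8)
The plan is to exploit that in model $2$ the array is the exact power law $q_{N,k}=\theta_N k^{-\alpha}$ and to compare the whole array with the extrapolated power law $\tilde q_{N,j}:=\theta_N j^{-\alpha}$, which coincides with $q_{N,j}$ whenever $j$ is from model $2$. Write $m_{*,N}:=\sqrt{NC+\beta^2}-\beta$ and $A_N:=1-m_{*,N}^2/(NC)=\frac{2\beta\sqrt{NC+\beta^2}-2\beta^2}{CN}$; the identity $m_{*,N}^2+2\beta m_{*,N}=NC$, which encodes the $C^1$ matching of the two models at $\log j=m_{*,N}$, gives $\alpha A_N=2m_{*,N}/(CN)$. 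First I would use this, together with $e^{\alpha s}-1\ge\alpha s$, to check for every $j\ge1$ that
\[
q_{N,j}\ \le\ \tilde q_{N,j}\qquad\text{and}\qquad q_{N,j}-q_{N,j+1}\ \le\ \tilde q_{N,j}-\tilde q_{N,j+1}.
\]
For $j$ from model $1$ the first inequality becomes, after the substitution $s=m_{*,N}-\log j\ge0$, the elementary bound $2m_{*,N}s-s^2\le 2\beta m_{*,N}(e^{\alpha s}-1)$, and the second follows by comparing the one-step drops written as $\int_j^{j+1}\tfrac{2\log t}{CN\,t}\,dt$ and $\int_j^{j+1}\alpha\theta_N t^{-\alpha-1}\,dt$ (at the interface one splits the increment at $e^{m_{*,N}}$). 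Since both factors on the right of \eqref{rec:UB} are non-negative, $q_{N,k}=\tilde q_{N,k}$ (as $(N,k)$ is from model $2$) and $q_{N,k-\ell}\le\tilde q_{N,k-\ell}$, the two displayed inequalities give
\[
\tfrac12\sum_{\ell=1}^{k-1}(q_{N,\ell}-q_{N,\ell+1})(q_{N,k-\ell}-q_{N,k})\ \le\ \tfrac12\,\theta_N^2\,g(k),\qquad g(k):=\sum_{\ell=1}^{k-1}\big(\ell^{-\alpha}-(\ell+1)^{-\alpha}\big)\big((k-\ell)^{-\alpha}-k^{-\alpha}\big).
\]

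The heart of the argument is the bound $g(k)\le\alpha I_1(\alpha)\,k^{-2\alpha}$ for all $k$, where $I_1(\alpha):=\int_0^1 s^{-\alpha-1}\big((1-s)^{-\alpha}-1\big)\,ds$ is finite since $\alpha=\beta^{-1}\in(0,1)$. I would obtain it by writing $\ell^{-\alpha}-(\ell+1)^{-\alpha}=\int_\ell^{\ell+1}\alpha t^{-\alpha-1}\,dt$ and using that $s\mapsto s^{-\alpha}-k^{-\alpha}$ is non-increasing to replace $(k-\ell)^{-\alpha}-k^{-\alpha}$ by $(k-t)^{-\alpha}-k^{-\alpha}$, so that $g(k)\le\int_1^k\alpha t^{-\alpha-1}\big((k-t)^{-\alpha}-k^{-\alpha}\big)\,dt$, and then rescaling $t=ks$. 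This is where the critical constant enters: expanding $I_1(\alpha)=\sum_{n\ge1}\frac{\alpha(\alpha+1)\cdots(\alpha+n-1)}{n!\,(n-\alpha)}$ and letting $\alpha\to0$ (dominated convergence, via $\alpha(\alpha+1)\cdots(\alpha+n-1)\le c\,\alpha\,\Gamma(n+\tfrac12)$ for $\alpha\le\tfrac12$) shows $I_1(\alpha)=\tfrac{\pi^2}{6}\,\alpha\,(1+o(1))$ — the same $\zeta(2)$ appearing in Lemma \ref{lem:calculuseries}.

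It remains to assemble the estimates. From $\tfrac12\theta_N^2 g(k)\le\tfrac12\alpha I_1(\alpha)(\theta_N k^{-\alpha})^2=\tfrac12\alpha I_1(\alpha)\,q_{N,k}^2$ and $q_{N,k}\le A_N$ (because $k\ge e^{m_{*,N}}$ and $q_{N,\cdot}$ is non-increasing), recalling $q_{N+1,k}-q_{N,k}=(\theta_{N+1}-\theta_N)k^{-\alpha}$ and dividing by $q_{N,k}=\theta_N k^{-\alpha}>0$, it suffices to prove $\tfrac12\alpha I_1(\alpha)\,A_N\le(\theta_{N+1}-\theta_N)/\theta_N$ for $N\ge N_0$. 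Here I would expand $\theta_N=A_N e^{\alpha m_{*,N}}$ with $A_N=2\beta m_{*,N}/(CN)$, $m_{*,N}=\sqrt{NC+\beta^2}-\beta$, obtaining $(\theta_{N+1}-\theta_N)/\theta_N=\frac{\alpha\sqrt C}{2\sqrt N}(1+o(1))>0$ and $A_N=\frac{2}{\alpha\sqrt{NC}}(1+o(1))$, so the inequality reduces to $I_1(\alpha)(1+o(1))\le\tfrac{\alpha C}{2}(1+o(1))$. Since $C>\pi^2/3$, one fixes $\epsilon$ with $\pi^2/6+2\epsilon<C/2$, then takes $\beta>1$ large enough that $\alpha=\beta^{-1}$ is small enough for $I_1(\alpha)\le(\pi^2/6+\epsilon)\alpha$, and finally enlarges $N_0$ to absorb the $o(1)$ terms; this fixes the constants $\beta,N_0$ of Proposition \ref{prop:upperbound}. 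The main obstacle is the sharp estimate for $g(k)$: one must see that $g(k)$ genuinely decays like $k^{-2\alpha}$ (each of the two increments in the convolution supplies a power of $k$) with proportionality constant of order $\alpha^2$ rather than $\alpha$ — it is the value $\pi^2/6$ of that constant's leading coefficient that forces the threshold $C>\pi^2/3$ and matches the constant $c=\pi^2/3$ of Theorem \ref{thm:distribution}.
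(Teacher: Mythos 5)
Your proposal is correct and follows essentially the same route as the paper: extend model $2$ to the pure power law $\theta_N k^{-\alpha}$, bound the convolution sum by $\tfrac{\alpha^2\pi^2}{6}(1+o(1))k^{-2\alpha}$ (your integral $I_1(\alpha)$ plays the role of the paper's Lemma \ref{lem:SRSS}), compare with the one-step growth of $\theta_N$ to reduce to $C>\pi^2/3$, and handle the model-$1$ indices by showing the extrapolated model $2$ dominates them below the crossover. The only cosmetic differences are that you control the mixed-model terms by a termwise comparison of both factors (values and increments) rather than via the non-negative partial derivatives of $f$, and you extract the $\zeta(2)$ constant from an integral rather than a double series.
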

\begin{proof}
Note that in this case, $q_{N,k}$ is also from model $2$.
As before, we need to show $q_{N+1,k} \geq q_{N,k} + X$ with $X$ as in \eqref{eq:Xlemadad}. Set 
\[ 
S(k) := \sum_{\ell=1}^{k-1}\left(\frac{1}{\ell^\alpha} - \frac{1}{(\ell+1)^\alpha}\right)\left(\frac{1}{(k-\ell)^\alpha} - \frac{1}{k^\alpha}\right). 
\]

If all $q_{N,\ell}$'s were from model $2$, then we would have $ X= \theta_{N}^{2}S(k)/2$. Our strategy here is to first get a bound for $S(k)$ as above and then see the effects of changing $q_{N,\ell}$'s to their true values.
\begin{lemma}\label{lem:SRSS}
For any $\varepsilon >0$, there exists $\alpha_{0}>0$ such that for all $0<\alpha<\alpha_{0}$, $k\geq 1$,
\[
S(k) \leq \frac{\alpha^2}{k^{2\alpha}}(1+\varepsilon)\frac{\pi^2}{6}.
\]

\end{lemma}
\begin{proof}[Proof of Lemma \ref{lem:SRSS}]
Let $0 < \alpha < 1/2$. Note that 
\begin{equation*}
    S(k) \leq \sum_{\ell=1}^{k-1}\frac{\alpha}{\ell^{1+\alpha}}\bigg(\frac{1}{(k-\ell)^\alpha} - \frac{1}{k^\alpha}\bigg) = \sum_{\ell=1}^{k-1}\frac{\alpha}{\ell^{1+\alpha}}\frac{1}{k^{\alpha}}\bigg((1-\frac{\ell}{k})^{-\alpha}-1\bigg).
\end{equation*}
Now, we expand via binomial series as $\ell < k$ to obtain
\begin{equation*}
    S(k) \leq \sum_{\ell=1}^{k-1}\frac{\alpha}{\ell^{1+\alpha}}\frac{1}{k^{\alpha}}\bigg(\frac{\alpha \ell}{k}+\frac{(-\alpha)(-\alpha-1)\ell^2}{2k^2} + \cdots\bigg).
\end{equation*}
All terms in the above series are positive, so we can switch the order of summation. Setting $\gamma_j = (-1)^{j-1}(-\alpha-1)\cdots(-\alpha-(j-1))$, and bounding the series, we have % Lets consider the terms $j > 1$.
\begin{equation*}
\begin{split}
    S(k) &\leq \sum_{j=1}^{\infty}\sum_{\ell=1}^{k-1} \frac{\alpha^2\gamma_j}{j!\ell^{1-j+\alpha}k^{j+\alpha}} \\
    &\leq \sum_{j=1}^{\infty}\frac{\alpha^2\gamma_j}{j!k^{j+\alpha}}\sum_{\ell=1}^{k-1}\ell^{j-1-\alpha} \leq \sum_{j=1}^{\infty}\frac{\alpha^2\gamma_j}{j!k^{j+\alpha}}\frac{1}{j-\alpha}k^{j-\alpha} = \sum_{j=1}^{\infty}\frac{\alpha^2\gamma_j}{j!k^{2\alpha}}\frac{1}{j-\alpha}.
\end{split}
\end{equation*}

Now,
\begin{equation*}
\begin{split}
    S(k) &\leq  \frac{\alpha^2}{k^{2\alpha}}\sum_{j=1}^{\infty}\frac{1}{j^2}\frac{j+2\alpha}{j}\prod_{i=1}^{j-1}\frac{i+\alpha}{i} \\
    &\leq \frac{\alpha^2}{k^{2\alpha}}\sum_{j=1}^{\infty}\frac{1}{j^2}e^{2\alpha}\prod_{i=1}^{j-1}e^{\alpha/i} = \frac{\alpha^2}{k^{2\alpha}}\sum_{j=1}^{\infty}\frac{1}{j^2}e^{2\alpha}e^{\alpha H_{j-1}}\\
    &\leq \frac{\alpha^2}{k^{2\alpha}}\sum_{j=1}^{\infty}\frac{1}{j^2}e^{2\alpha}e^{\alpha(1+\log(j))} = \frac{\alpha^2}{k^{2\alpha}}\sum_{j=1}^{\infty}\frac{1}{j^{2-\alpha}}e^{3\alpha}.
  \end{split}
\end{equation*}
Here, $H_n$ is the $n^{th}$ harmonic number.  We notice that the series 
\[
\sum_{j=1}^{\infty}\frac{1}{j^{2-\alpha}}e^{3\alpha}
\]
is summable for $\alpha < 1$, so we can apply the monotone convergence theorem to conclude that it converges to $\frac{\pi^2}{6}$ as $\alpha \to 0$. This means that for any $\varepsilon >0$ we can choose $\alpha$ small enough so that
\begin{equation*}
    S(k) \leq \frac{\alpha^2}{k^{2\alpha}}(1+\varepsilon)\frac{\pi^2}{6}.
\end{equation*}
\end{proof}
Now, note that by Lemma \ref{lem:SRSS}, and \eqref{def:theta}

\begin{equation*}
  \frac{\theta_N^2S(k)}{2} \leq \frac{\theta_N^2\alpha^2\pi^2(1+\varepsilon)}{12k^{2\alpha}} = \frac{\alpha^2q_{N,k}^2\pi^2(1+\varepsilon)}{12}.
\end{equation*}
Furthermore, for any $\varepsilon>0$, if we take $N$ large enough, 
\begin{equation*}
\begin{split}
    q_{N+1,k}/q_{N,k} &= \frac{N}{N+1}\frac{\sqrt{(N+1)C+\beta^2}-2\beta^2}{\sqrt{NC+\beta^2}-2\beta^2}e^{\alpha(\sqrt{(N+1)C+\beta^2} - \sqrt{NC+\beta^2})}\\ 
    &\geq e^{-2/N + \frac{\alpha C}{2\sqrt{(N+1)C+\beta^2}}} \\ 
    &\geq e^{\frac{\alpha\sqrt{C}}{2\sqrt{N}}(1-\varepsilon)}.
\end{split}
\end{equation*}
Thus, for $N$ large enough, using the bound $q_{N,k} \leq \frac{2\beta}{\sqrt{NC}}$, $\beta =\alpha^{-1}$,
\begin{equation*}
\begin{split}
  q_{N,k} +    \frac{\theta_N^2S(k)}{2} &\leq q_{N,k}\left(1 + \frac{\alpha^2q_{N,k}\pi^2(1+\varepsilon)}{12}\right) \\
  &\leq q_{N,k}\left(1 + \frac{\pi^2(1+\varepsilon)\alpha}{6\sqrt{NC}}\right) \leq q_{N,k}e^{\frac{\pi^2(1+\varepsilon)\alpha}{6\sqrt{NC}}}\\
& \leq q_{N+1,k}e^{\frac{\pi^2(1+\varepsilon)\alpha}{6\sqrt{NC}}-\frac{\alpha\sqrt{C}}{2\sqrt{N}}(1-\varepsilon) } \leq q_{N+1,k}.
\end{split}
\end{equation*}
For this last inequality to hold, we need:
\begin{equation*}
    \frac{\alpha\sqrt{C}(1-\varepsilon)}{2\sqrt{N}} - \frac{\pi^2(1+\varepsilon)\alpha}{6\sqrt{NC}} \geq 0 \Leftrightarrow C \geq \frac{\pi^2(1+\varepsilon)}{3(1-\varepsilon)},
\end{equation*}
which is true by choosing $\varepsilon$ small enough.
Hence, we obtain 
\begin{equation}\label{fakerecurrence}
 q_{N+1,k}-q_{N,k} \geq     \frac{\theta_N^2S(k)}{2}. 
 \end{equation}

Equation \eqref{fakerecurrence} is the equivalent of \eqref{rec:UB} if all the $q_{N,\ell}$, $1 \leq \ell \leq k$, came from model~$2$. We claim that the inequality is still satisfied if we replace $q_{N,\ell}$ by model $1$ for the values of $\ell$ such that $\log(\ell) < \sqrt{NC+\beta^2} - \beta$. We know that we will not have to switch $ \ell = k$.  

Assume that $N$ is large enough so that $\sqrt{NC+\beta^2} - \beta < \sqrt{NC}$. To compare the two models for $\log(\ell) < \sqrt{NC+\beta^2} - \beta$, we will show that the first model has smaller negative-log-derivative at all $\ell$ such that $\log \ell < \sqrt{NC+\beta^2} - \beta$. This will imply model $2$ is greater than model $1$ for such $\ell$'s. First, for the second model 
\[
  -\frac{\partial }{\partial m}  \log \bigg[ (\frac{2\beta\sqrt{NC+\beta^2}-2\beta^2}{NC})e^{-\beta^{-1}(m - (\sqrt{NC+\beta^2} - \beta))}  \bigg] = \alpha,\]
while  for the first model:
\begin{equation*}
   -\frac{\partial }{\partial m} \log(1-\frac{m^2}{NC}) = \frac{2m}{NC(1-\frac{m^2}{NC})}.
\end{equation*}

We see that this is an increasing function of $m$, and if we evaluate at boundary $m =  \sqrt{NC+\beta^2}-\beta$, we have:
$$\frac{2(\sqrt{NC+\beta^2}-\beta)}{NC - (\sqrt{NC+\beta^2}-\beta)^2} = \frac{2(\sqrt{NC+\beta^2}-\beta)}{2\beta\sqrt{NC+\beta^2}-2\beta^2} = \frac{1}{\beta} = \alpha.$$
Now, we know that the model is continuous, and it is continuously differentiable, and model 1 has higher $\log$ derivative.  This means that model 2 is always greater than model 1 for $\log(\ell) < \sqrt{NC+\beta^2} - \beta$ (and equal at the boundary).  As a result, we will be decreasing $q_{N,\ell}$ when we switch models.  This only further decreases the RHS of \eqref{rec:UB} because of non-negative partial derivatives in $q_{N,\ell}$ for $\ell < k$.  This is because both model 2 and the true values of $q_{N,\ell}$ are monotonic decreasing functions of $\ell$.  
\end{proof}

\begin{proof}[Proof of Proposition \ref{prop:upperbound}]
The proof follows from Lemma \ref{lem:lemadad}, Lemma \ref{lem:middle} and Lemma \ref{lem:tailmodelUB}.
\end{proof}

\section{Lower Bound}
 Similarly to before, our goal is to show that there exists an array $\{q_{N,k}\}_{N\geq 1, k \geq 1}$, and $N_{0}, N_{1} \geq 1$ such that for $N \geq N_0$:
\begin{equation}\label{eq:potato}
q_{N+1,k} - q_{N,k} \leq  \frac{1}{2} \sum_{\ell =1}^{k-1} (q_{N,\ell}-q_{N,\ell+1})(q_{N,k-\ell}-q_{N,k}),
\end{equation}
\begin{equation}\label{eq:probdistcondition}
0 \leq q_{N,k}\leq q_{N,k-1}\leq 1, \text{ for all } k \geq 1,
\end{equation}
and 
\begin{equation}\label{eq:sourcream}
q_{N_{0}+N,k} \leq p_{N_{1}+N,k}, \quad \text{ for all } \quad k \geq 1.
\end{equation}
Such an array will be called a lower bound model.

Define the sequence of constants $\{a_{k}\}$ inductively as follows. Set  $a_{1}=0$, $d_{1}=0$, 
\begin{equation*}
    d_k = ((a_2-a_1)a_{k-1}+(a_3-a_2)a_{k-2}+\cdots+(a_{k-1}-a_{k-2})a_2),
\end{equation*}
and 
\begin{equation}\label{eq:kimjong-un}
    a_k = 1 + \sqrt{1+d_k}.
\end{equation}
Calculated as such, we have $a_2 = 2$, and $a_3 = 1 + \sqrt{5}$, and so on.

\begin{lemma}\label{lem:Iamgettingbetter}
The array $q_{N,k} = 1 - \frac{a_k}{N}$ satisfies \eqref{eq:potato} and for each $k\geq 1$, there is $N_{0}(k)$ and $N_{1}(k)$ so that 
$$q_{N_{0}(k),k} \leq p_{N_{1},k}.$$
\end{lemma}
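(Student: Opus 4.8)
The plan is to prove \eqref{eq:potato} for this array by a direct computation in which both sides collapse to explicit multiples of $a_k$, and to settle the initial condition by showing that $p_{N,k}\to 1$ as $N\to\infty$ for each fixed $k$.

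\textbf{Step 1: the recurrence inequality.} Substitute $q_{N,k}=1-a_k/N$ into \eqref{eq:potato}. The left-hand side is $q_{N+1,k}-q_{N,k}=a_k/\bigl(N(N+1)\bigr)$. For the right-hand side, $q_{N,\ell}-q_{N,\ell+1}=(a_{\ell+1}-a_\ell)/N$ and $q_{N,k-\ell}-q_{N,k}=(a_k-a_{k-\ell})/N$, so it equals $\tfrac{1}{2N^2}\sum_{\ell=1}^{k-1}(a_{\ell+1}-a_\ell)(a_k-a_{k-\ell})$. Expanding the product, $\sum_{\ell=1}^{k-1}(a_{\ell+1}-a_\ell)$ telescopes to $a_k-a_1=a_k$, and after the change of index $m=\ell+1$ the remaining piece $\sum_{\ell=1}^{k-1}(a_{\ell+1}-a_\ell)a_{k-\ell}=\sum_{m=2}^{k}(a_m-a_{m-1})a_{k-m+1}$ is exactly $d_k$ together with the endpoint term $(a_k-a_{k-1})a_1=0$. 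Hence the right-hand side of \eqref{eq:potato} equals $\tfrac{1}{2N^2}(a_k^2-d_k)$. Since the defining relation \eqref{eq:kimjong-un} is equivalent to $a_k^2-2a_k=d_k$, we get $a_k^2-d_k=2a_k$, so the right-hand side is $a_k/N^2$, and \eqref{eq:potato} reduces to $a_k/\bigl(N(N+1)\bigr)\le a_k/N^2$, which holds because $a_k\ge 0$. A short induction on $k$ moreover shows that $d_k$ (hence $a_k$) is non-decreasing in $k$, with $0=a_1\le a_2\le\cdots$, which also gives the monotonicity demanded by \eqref{eq:probdistcondition}.

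\textbf{Step 2: the initial condition.} Because the right-hand side of \eqref{eq:firstrecurrence2} is a sum of products of non-negative numbers, $N\mapsto p_{N,k}$ is non-decreasing; hence it converges to some $p_{\infty,k}\in[0,1]$, with $p_{\infty,1}=1$. I would then prove $p_{\infty,k}=1$ for all $k$ by induction on $k$: assuming $p_{\infty,\ell}=1$ for $\ell<k$, let $N\to\infty$ in \eqref{eq:firstrecurrence2}; the left-hand side tends to $0$, while on the right every term with $\ell<k-1$ carries the vanishing factor $p_{\infty,\ell}-p_{\infty,\ell+1}$, so only $\tfrac12(1-p_{\infty,k})^2$ survives, forcing $p_{\infty,k}=1$. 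Consequently, given $k$, for any choice of $N_0(k)$ we may pick $N_1(k)$ so large that $p_{N_1(k),k}\ge 1-a_k/N_0(k)=q_{N_0(k),k}$ (in fact $N_0(k)=1$ already suffices, since $a_k\ge 2$ for $k\ge 2$ makes $q_{1,k}\le 0\le p_{N_1(k),k}$, while $q_{N,1}=1=p_{N,1}$).

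The substantive part is the computation in Step 1, and its only delicate point is the bookkeeping in the re-indexing identity $\sum_{\ell=1}^{k-1}(a_{\ell+1}-a_\ell)a_{k-\ell}=d_k$, where one must track the endpoint term arising at $m=k$ and discard it using $a_1=0$; one should also confirm the induction establishing $a_k\uparrow$ before invoking it for \eqref{eq:probdistcondition}. Step 2 is an elementary monotone-convergence argument, so I anticipate no genuine obstacle.
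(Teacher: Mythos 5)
Your proposal is correct and follows essentially the same route as the paper: both reduce \eqref{eq:potato} to the identity $a_k^2-d_k=2a_k$ coming from the defining quadratic for $a_k$, so that the inequality becomes $a_k/(N(N+1))\le a_k/N^2$, and both settle the initial condition via $p_{N,k}\to 1$ for fixed $k$. The only difference is that you actually prove the convergence $p_{N,k}\to 1$ (monotonicity in $N$ plus passing to the limit in \eqref{eq:firstrecurrence2}), which the paper asserts without proof.
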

\begin{proof}
We start by showing that \eqref{eq:potato} is satisfied for large $N$. Note that
\begin{equation*}
    q_{N+1,k}-q_{N,k} = \frac{a_k}{N(N+1)} \leq \frac{a_k}{N^2}.
\end{equation*}
Likewise, we can calculate the right hand side of \eqref{eq:potato} and use the definition of $a_{k}$ 
\begin{equation*}
    a_k^2-2a_k-((a_2-a_1)a_{k-1}+(a_3-a_2)a_{k-2}+\cdots+(a_{k-1}-a_{k-2})a_2) = 0,
\end{equation*}
to obtain
\begin{equation*}
    \bigg(a_k^2-2a_1a_k-((a_2-a_1)a_{k-1}+(a_3-a_2)a_{k-2}+\cdots+(a_{k-1}-a_{k-2})a_2-a_{k-1}a_1)\bigg) \geq 2a_k,
\end{equation*}
which is exactly \eqref{eq:potato} in this case. Inequality \eqref{eq:sourcream} holds since we have a finite number of terms and $p_{N,k}$ goes to $1$ for each fixed $k$. 
\end{proof}

Fix $K \geq 1$ and let $c < \pi^{2}/3$. Let $b_k,1\leq k\leq K$ be any non-negative, monotonic non-decreasing sequence such that $b_K = c^{-1}\log(K)^2$ with 
\begin{equation}\label{eq:bkisaniceconstant}
1-\frac{b_{k}}{N} \leq p_{N+N_{1},k},
\end{equation}
for some $N_{1} \geq 1$.
   Define 
\begin{equation}\label{def:auffingerfamilyvacation}
 q_{N,k} = \begin{cases} 
      1-\frac{b_k}{N} & k < K \\
      1 - \frac{\log(k)^2}{Nc} & k \geq K, \log(k) < \sqrt{Nc} \\ 
      0 & k \geq K, \log(k) \geq \sqrt{Nc}.
   \end{cases}
\end{equation}
Note in particular that $q_{N,k}$ is actually a CDF for all large $N$.  Then, we have the following proposition, which is the main result of this section.
\begin{proposition}\label{prop:adidas} 
There exists $\bar{K}$ large such that if 
\[
q_{N,k} \leq p_{N+N_{1},k}, \text{ for all } K\leq k\leq \bar K  
\]
then $q_{N,k}$ satisfies the inequalities \eqref{eq:potato}, \eqref{eq:probdistcondition} and \eqref{eq:sourcream} for $k \geq \bar{K}$ and for all large $N$.  
\end{proposition}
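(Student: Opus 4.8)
The plan is to run the argument of Section~2 with all inequalities reversed. Two of the three requirements are essentially free. Condition \eqref{eq:probdistcondition} is immediate from the shape of \eqref{def:auffingerfamilyvacation}: each of the three pieces is non-increasing in $k$, they match continuously at $k=K$ (because $b_K=c^{-1}(\log K)^2\ge b_{K-1}$) and at $\log k=\sqrt{Nc}$ (where the middle piece is $0$), and all values lie in $[0,1]$ once $N$ is large — as already remarked after the definition. Granting \eqref{eq:potato} for $k\ge\bar K$ and all large $N$, condition \eqref{eq:sourcream} follows by the lower-bound analogue of the induction preceding Lemma~\ref{lem:lemUB}: the function $f$ in \eqref{def:mainfunction} has non-negative partials on the set appearing in \eqref{eq:probdistcondition} (by \eqref{eq:PD}), and \eqref{eq:potato} says $q_{N+1,k}\le f(q_{N,1},\dots,q_{N,k})$, so, feeding in the base cases ($q_{N,k}=1-b_k/N\le p_{N+N_1,k}$ for $k<K$ by \eqref{eq:bkisaniceconstant}, the hypothesis for $K\le k\le\bar K$, and the trivial $q_{N,k}=0$ for $k\ge\bar K$ and $N\le c^{-1}(\log k)^2$) and inducting first on $k$ and then on $N$ propagates $q_{N,k}\le p_{N+N_1,k}$ to all $k\ge\bar K$. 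So the whole content is \eqref{eq:potato} for $k\ge\bar K$.

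I would verify \eqref{eq:potato} by splitting, as in Section~2, into (i) $\log k<\sqrt{Nc}$; (ii) $\sqrt{Nc}\le\log k<\sqrt{(N+1)c}$; (iii) $\log k\ge\sqrt{(N+1)c}$. Case (iii) is trivial: $q_{N+1,k}=q_{N,k}=0$, while the right side of \eqref{eq:potato} is a sum of non-negative terms. In case (i) all the $q$'s involved are in model~$1$, so $q_{N+1,k}-q_{N,k}=\frac{(\log k)^2}{cN(N+1)}$; dropping the finitely many (non-negative) terms with $\ell<K$ or $k-\ell<K$, the right side of \eqref{eq:potato} is at least $\frac{S}{2c^2N^2}$ with $S=\sum_{\ell=K}^{k-K}\big((\log(\ell+1))^2-(\log\ell)^2\big)\big((\log k)^2-(\log(k-\ell))^2\big)$. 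Using $(\log(\ell+1))^2-(\log\ell)^2\ge\frac{2\log\ell}{\ell+1}$ and $(\log k)^2-(\log(k-\ell))^2=(\log k+\log(k-\ell))\log\frac{k}{k-\ell}$, and restricting the sum to the bulk $\varepsilon k\le\ell\le(1-\varepsilon)k$ (on which $\log\ell,\log(k-\ell)=(1-o_k(1))\log k$), Lemma~\ref{lem:calculuseries} — which supplies the constant $\pi^2/6$ exactly — yields $S\ge 4(\log k)^2\frac{\pi^2}{6}(1-\varepsilon')$ for $k$ large. Hence the right side of \eqref{eq:potato} is at least $\frac{(\log k)^2\pi^2}{3c^2N^2}(1-\varepsilon')$, which exceeds $\frac{(\log k)^2}{cN(N+1)}$ once $c<\pi^2/3$ and $N,k$ are large; this is where the strict inequality $c<\pi^2/3$ is spent.

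Case (ii) is the delicate one. Write $(\log k)^2=(N+t)c$ with $t\in[0,1)$: then $q_{N,k}=0$, $q_{N+1,k}=\frac{1-t}{N+1}$, and the left side of \eqref{eq:potato} is $\frac{1-t}{N+1}$. Since $\sqrt{(N+1)c}-\sqrt{Nc}=O(N^{-1/2})$ we have $k=e^{\sqrt{Nc}}\big(1+O(N^{-1/2})\big)$, so in the right side only the terms with $\ell<e^{\sqrt{Nc}}$ and $k-\ell<e^{\sqrt{Nc}}$ are nonzero; this range still contains the bulk $[\varepsilon k,(1-\varepsilon)k]$, and on it one has the \emph{exact} identity $q_{N,k-\ell}=1-\frac{(\log(k-\ell))^2}{cN}=\frac{(\log k)^2-(\log(k-\ell))^2}{cN}-\frac{t}{N}$ together with $q_{N,\ell}-q_{N,\ell+1}=\frac{(\log(\ell+1))^2-(\log\ell)^2}{cN}$. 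Substituting, bounding the main part by $\frac{S}{2c^2N^2}$ as in case (i) (hence at least $\frac{\pi^2(N+t)}{3cN^2}(1-\varepsilon')$, using $(\log k)^2=(N+t)c$), and bounding the defect term via the telescoping estimate $\sum\big((\log(\ell+1))^2-(\log\ell)^2\big)\le(\log k)^2$ by $-\frac{t}{2cN^2}(\log k)^2=-\frac{t(N+t)}{2N^2}$, the right side of \eqref{eq:potato} is at least $\frac{N+t}{N^2}\big(\frac{\pi^2}{3c}(1-\varepsilon')-\frac{t}{2}\big)$. Comparing with $\frac{1-t}{N+1}$, the required inequality reduces (for $N$ large) to $1-\frac t2\le\frac{\pi^2}{3c}(1-\varepsilon')$, which holds for every $t\in[0,1)$ once $c<\pi^2/3$ and $\varepsilon'$ is small.

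The main obstacle is quantitative rather than structural: the two lower bounds for $S$ must be sharp enough that the positive margin $\pi^2/3-c$ is not eaten by the slack in $\frac{2\log\ell}{\ell+1}$ versus $\frac{2\log\ell}{\ell}$, by the correction $-(\log\frac{k}{k-\ell})^2$, and by the restriction to the bulk — all of which require invoking Lemma~\ref{lem:calculuseries} carefully and pushing $k$ large and $\varepsilon$ small in the right order. In case (ii) there is the extra point that restricting to $q_{N,k-\ell}\neq0$ discards terms which in the model-$1$ formula are not individually negligible; what saves the estimate is the quantitative closeness $k\approx e^{\sqrt{Nc}}$ forced by the transition regime (so the discarded mass is an $o(1)$-fraction of $S$), together with the exact value $-t/N$, not a crude $-1/N$, of the defect $1-\frac{(\log k)^2}{cN}$. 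Finally one chooses $\bar K$ large so that every ``$k$ large'' and ``$N$ large'' above is in force — uniformly, since the relevant thresholds are monotone in $k$ — and then $N_0,N_1$ so the induction of the first paragraph starts; this bookkeeping is routine.
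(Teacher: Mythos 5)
Your proposal is correct and follows essentially the same route as the paper: the same three-way case split on the position of $\log k$ relative to $\sqrt{Nc}$ and $\sqrt{(N+1)c}$, the same key quantitative input $\sum_{\ell}\frac{1}{\ell}\log\frac{k}{k-\ell}\to\frac{\pi^2}{6}$ (Lemmas \ref{lem:calculuseries} and \ref{lem:cherry}) producing the breathing room $\frac{1}{c^2}\big(\frac{\pi^2}{3}(1-\delta)-c\big)\frac{(\log k)^2}{N^2}$ of Lemma \ref{lem:king}, and the same monotone-partial-derivative induction converting \eqref{eq:potato} into \eqref{eq:sourcream}. Your only departures are local and both work: you discard the non-negative terms with indices below $K$ outright rather than bounding their perturbation via the choice \eqref{JoniMitchell}, and in the transition regime you compute directly with $t=(\log k)^2/c-N$ where the paper instead perturbs from the all-model-one configuration and tracks the single negative partial derivative $\partial f/\partial q_{N,k}=q_{N,k}\ge -1/N$.
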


Before starting the proof of the proposition we will need a few lemmas. 

\begin{definition}
Let $A>0$. We say that inequality \eqref{eq:potato} is satisfied with breathing room $A$ if 
\[A  \leq  \frac{1}{2} \sum_{\ell =1}^{k-1} (q_{N,\ell}-q_{N,\ell+1})(q_{N,k-\ell}-q_{N,k}) - q_{N+1,k} + q_{N,k}.\]
\end{definition}

\begin{lemma}\label{lem:king}
For all $\delta > 0$, there exists $K_\delta$ such that for all $k \geq K_\delta$ and for all $c < \frac{\pi^2}{3}$, when we set $q_{N,j}' = 1 - \frac{\log(j)^2}{Nc}$ for all $N,j$, then inequality \eqref{eq:potato} holds for $q_{N,j}'$ with breathing room $\frac{\epsilon\log(k)^2}{N^2}$, where $\epsilon = \frac{1}{c^2}(\frac{\pi^2}{3}(1-\delta)-c)$.
\end{lemma}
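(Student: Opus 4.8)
The plan is to reduce the statement to a single lower bound on the quantity
\[
S \;:=\; \sum_{\ell=1}^{k-1}\bigl((\log(\ell+1))^2-(\log \ell)^2\bigr)\bigl((\log k)^2-(\log(k-\ell))^2\bigr),
\]
i.e.\ exactly the sum $S$ of \eqref{eq:Xlemadad}. With $q'_{N,j}=1-(\log j)^2/(Nc)$ one has $q'_{N,\ell}-q'_{N,\ell+1}=\bigl((\log(\ell+1))^2-(\log\ell)^2\bigr)/(Nc)$ and $q'_{N+1,k}-q'_{N,k}=(\log k)^2/(cN(N+1))$, so for every $N\ge 1$ the breathing-room quantity of \eqref{eq:potato} equals $S/(2c^2N^2)-(\log k)^2/(cN(N+1))$. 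Since $N(N+1)\ge N^2$ and, by the very definition of $\epsilon$, $2c^2\bigl(\epsilon+c^{-1}\bigr)=\tfrac{2\pi^2}{3}(1-\delta)$, the conclusion of the lemma follows from the single estimate
\begin{equation*}
S\ \ge\ \tfrac{2\pi^2}{3}(1-\delta)\,(\log k)^2\qquad\text{for all }k\ge K_\delta. \tag{$\star$}
\end{equation*}
Two remarks: $(\star)$ does not involve $c$, which is exactly why $K_\delta$ may be taken uniform over $c<\pi^2/3$; and Lemma~\ref{lem:lemadad} (see the bound on $S$ following \eqref{eq:Xlemadad}) already contains the matching upper bound $S\le \tfrac{2\pi^2}{3}(\log k)^2$, so $(\star)$ asserts that this elementary bound is asymptotically sharp.

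To prove $(\star)$, fix $\rho\in(0,\tfrac12)$, to be chosen small depending on $\delta$, and discard all terms except those with $\rho k\le \ell\le(1-\rho)k$ (every summand is $\ge 0$). On this range I will use three elementary pointwise bounds: (i) $(\log(\ell+1))^2-(\log\ell)^2\ge 2\log\ell\cdot\log(1+1/\ell)\ge \tfrac{2\log\ell}{\ell+1}\ge \tfrac{2\log\ell}{\ell}\bigl(1-\tfrac1{\rho k}\bigr)$; (ii) $\log\ell\ge\log(\rho k)=\log k+\log\rho\ge \bigl(1-\tfrac{|\log\rho|}{\log k}\bigr)\log k$; and (iii) $(\log k)^2-(\log(k-\ell))^2=\log\tfrac{k}{k-\ell}\cdot\bigl(\log k+\log(k-\ell)\bigr)\ge \log\tfrac{k}{k-\ell}\,\bigl(2\log k+\log\rho\bigr)$, the last step using $k-\ell\ge\rho k$. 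Multiplying (i)--(iii) shows that each retained summand is at least $\Theta_{k,\rho}\,(\log k)^2\,\tfrac1\ell\log\tfrac{k}{k-\ell}$, where $\Theta_{k,\rho}\to 4$ as $k\to\infty$ for each fixed $\rho$.

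It remains to bound $\sum_{\rho k\le\ell\le(1-\rho)k}\tfrac1\ell\log\tfrac{k}{k-\ell}$ from below. By Lemma~\ref{lem:calculuseries}, $h(k)=\sum_{\ell=1}^{k-1}\tfrac1\ell\log\tfrac{k}{k-\ell}\to\tfrac{\pi^2}{6}$, so it is enough to show the head and tail of $h(k)$ are negligible. For the head, when $\ell<\rho k$ one has $\log\tfrac{k}{k-\ell}=-\log(1-\ell/k)\le\tfrac{\ell/k}{1-\ell/k}\le\tfrac{\ell}{k(1-\rho)}$, hence $\sum_{\ell<\rho k}\tfrac1\ell\log\tfrac{k}{k-\ell}\le\tfrac{\rho}{1-\rho}$. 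For the tail, substituting $m=k-\ell$ and using $\tfrac1\ell\le\tfrac1{(1-\rho)k}$ together with the Stirling bound $\sum_{m=1}^{M}\log\tfrac km=M\log\tfrac kM+M+O(\log k)$, one gets $\sum_{(1-\rho)k<\ell\le k-1}\tfrac1\ell\log\tfrac{k}{k-\ell}\le\tfrac{\rho(1+|\log\rho|)}{1-\rho}+o_k(1)$. Both bounds tend to $0$ as $\rho\to0$, so $\sum_{\rho k\le\ell\le(1-\rho)k}\tfrac1\ell\log\tfrac{k}{k-\ell}\ge h(k)-\tfrac{\rho(2+|\log\rho|)}{1-\rho}-o_k(1)$. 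Combining with the previous paragraph, $\liminf_{k\to\infty}S/(\log k)^2\ge 4\bigl(\tfrac{\pi^2}{6}-\tfrac{\rho(2+|\log\rho|)}{1-\rho}\bigr)$; choosing $\rho=\rho(\delta)$ small enough that $\tfrac{4\rho(2+|\log\rho|)}{1-\rho}<\tfrac{2\pi^2}{3}\delta$, and then $K_\delta$ large, yields $(\star)$.

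The only real obstacle is $(\star)$ — showing that the elementary upper bound in Lemma~\ref{lem:lemadad} is asymptotically tight. The delicate points are precisely the two places where mass could be lost: $\sum_\ell\tfrac1\ell\log\tfrac{k}{k-\ell}$ could concentrate near $\ell\approx0$ or near $\ell\approx k$ (ruled out by the head/tail estimates, the tail needing a Stirling-type input), and $\log\ell$ may be replaced by $\log k$ throughout the bulk only after truncating to $\ell\ge\rho k$. Everything else is routine bookkeeping of the error factors $\Theta_{k,\rho}\to 4$ and the reduction arithmetic in the first paragraph.
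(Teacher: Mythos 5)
Your proof is correct and follows essentially the same route as the paper: reduce the breathing-room claim to the lower bound $S\ge\frac{2\pi^2}{3}(1-\delta)(\log k)^2$, replace $(\log(\ell+1))^2-(\log\ell)^2$ by $2\log\ell/\ell$ up to controllable errors, and feed the result into Lemma \ref{lem:calculuseries}. The only real difference is technical rather than conceptual: your two-sided truncation to $\rho k\le\ell\le(1-\rho)k$ lets you keep the exact factorization $(\log k)^2-(\log(k-\ell))^2=\log\bigl(\tfrac{k}{k-\ell}\bigr)\bigl(\log k+\log(k-\ell)\bigr)$ and so avoids the separate uniform bound $B_k<12$ (Lemma \ref{lem:hotdogsandbeans}) that the paper's one-sided truncation requires.
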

\begin{proof}
Let $\delta > 0$.  First, we have:
\begin{equation*}
    q_{N+1,k}' - q_{N,k}' = \frac{\log(k)^2}{N(N+1)C} \leq \frac{\log(k)^2}{N^2c}.
\end{equation*}
Now, the right side of \eqref{eq:potato} is given by
\begin{equation*}
    X = \frac{1}{2c^2N^2}S,
\end{equation*}
where
\begin{equation*}
\begin{split}
    S :&= \sum_{j=1}^{k-1} (\log(j+1)^2 - \log(j)^2)(\log(k)^2 - \log(k-j)^2) \\
    &\geq \sum_{j=1}^{k-1} (\frac{2\log(j+1)}{j+1})(\log(k)^2 - \log(k-j)^2).
\end{split}
\end{equation*}
Here, we used that $\log(j+1)^2 - \log(j)^2 \geq \frac{2\log(j+1)}{j+1}$ for each $j \geq 3$.  Although this does not hold for smaller $j$, the above bound is valid for large $k$ by same techniques as in the upper bound.

Next, let  $A$ be a large integer so that $\frac{1}{A} < \delta/3$. 
Choose $k$ large enough so that:
\begin{equation}\label{shoehorn}
    \frac{\log(k) - \log A}{\log(k)}\frac{k-A}{k} \geq 1 - \delta/3.
\end{equation}
As 
\[
\log(k)^2-\log(k-j)^2 = -2\log(k)\log\left(\frac{k-j}{k}\right) - \log\bigg(\frac{k-j}{k}\bigg)^2,
\]  
if set 
\begin{equation*}\label{eq:guacamole}
B_k = \sum_{j=1}^{k-1} \bigg(\frac{1}{j}\bigg)\log\bigg(1 - \frac{j}{k}\bigg)^2,
\end{equation*}
we have:
\begin{equation*}
\begin{split}
    S &\geq \sum_{j=1}^{k-1} (\frac{2\log(j+1)}{j+1})\left(-2\log(k)\log\left(\frac{k-j}{k}\right)-\log(\frac{k-j}{k})^2\right) \\ 
    &\geq 4\log(k)^2\sum_{j=\lfloor k/A \rfloor}^{k-1} \frac{\log(j+1)}{\log(k)}\frac{j}{j+1}\bigg(\frac{-1}{j}\bigg)\log\bigg(\frac{k-j}{k}\bigg) - B_k\log(k) \\ 
    &\geq 4\log(k)^2\sum_{j=\lfloor k/A \rfloor}^{k-1} \frac{\log(k) - \log(A)}{\log(k)}\frac{k-A}{k}\bigg(\frac{-1}{j}\bigg)\log\bigg(\frac{k-j}{k}\bigg) - B_k\log(k) \\ &\geq 4(1-\delta/3)\log(k)^2\sum_{j=\lfloor k/A \rfloor}^{k-1} \bigg(\frac{-1}{j}\bigg)\log\bigg(\frac{k-j}{k}\bigg) - B_k\log(k) \\&= 4(1-\delta/3)\log(k)^2M_A(k) - B_k\log(k).
\end{split}
\end{equation*}
In the last line we defined:
\begin{equation}\label{ref:macandcheese}
    M_A(k) = \sum_{j=\lfloor k/A \rfloor}^{k-1} \bigg(\frac{-1}{j}\bigg)\log\bigg(\frac{k-j}{k}\bigg).
\end{equation}
 We choose $k$ large enough and use Lemma \ref{lem:hotdogsandbeans} below to get
\begin{equation}\label{dog}
    B_k\log(k) \leq 12 \log(k) \leq \frac{4\delta\log(k)^2\pi^2}{10}.
\end{equation}
Therefore, we will have by Lemma \ref{lem:cherry} below for large $k \geq K_{\delta}$ that:
\begin{equation*}
    S \geq \frac{2\pi^2(1-\delta)\log(k)^2}{3}.
\end{equation*}
Thus, $X \geq \frac{\pi^2(1-\delta)\log(k)^2}{3c^2N^2}$.  This gives us $\frac{\epsilon \log(k)^2}{N^2}$ breathing room, where \[\epsilon = \frac{1}{c^2}\bigg(\frac{\pi^2}{3}(1-\delta)-c\bigg),\] as desired.  
\end{proof}
We now state and prove the two bounds used in the end of the proof of Lemma~\ref{lem:king}.
\begin{lemma} \label{lem:cherry} Let $M_{A}$ be defined as in \eqref{ref:macandcheese}.  Then
\[ \liminf_{k \to \infty} M_A(k) \geq \frac{\pi^2}{6}-\frac{\pi^2}{6A}.\]
\end{lemma}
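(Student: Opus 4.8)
The plan is to recognize $M_A(k)$ as a \emph{tail} of the sum $h(k)$ from Lemma \ref{lem:calculuseries}. Since $-\log\!\big(\tfrac{k-j}{k}\big)=\log\!\big(\tfrac{k}{k-j}\big)\ge 0$, we may write
\[
M_A(k)=\sum_{j=\lfloor k/A\rfloor}^{k-1}\frac{1}{j}\log\!\left(\frac{k}{k-j}\right)=h(k)-R_A(k),
\qquad
R_A(k):=\sum_{j=1}^{\lfloor k/A\rfloor-1}\frac{1}{j}\log\!\left(\frac{k}{k-j}\right),
\]
(valid once $k\ge A$, which holds for all large $k$). By Lemma \ref{lem:calculuseries}, $h(k)\to\pi^2/6$, so everything reduces to showing the ``head'' $R_A(k)$ is at most $\pi^2/(6A)$, uniformly in $k$.

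To bound $R_A(k)$, I would repeat verbatim the Taylor-expansion trick used to prove Lemma \ref{lem:calculuseries}: expand $\log\!\big(\tfrac{k}{k-j}\big)=-\log(1-j/k)=\sum_{a\ge 1}\frac{1}{a}(j/k)^a$ (legitimate since $j<k$), and interchange the two sums, which is permitted because all terms are nonnegative. This gives
\[
R_A(k)=\sum_{a=1}^{\infty}\frac{1}{ak^a}\sum_{j=1}^{\lfloor k/A\rfloor-1}j^{\,a-1}.
\]
Since $x\mapsto x^{a-1}$ is increasing, $\sum_{j=1}^{m}j^{\,a-1}\le\int_1^{m+1}x^{a-1}\,dx\le (m+1)^a/a$; with $m+1=\lfloor k/A\rfloor\le k/A$ this yields $\sum_{j=1}^{\lfloor k/A\rfloor-1}j^{\,a-1}\le (k/A)^a/a$, hence
\[
R_A(k)\le \sum_{a=1}^{\infty}\frac{1}{a^2A^{a}},
\]
a bound independent of $k$.

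Finally, since $A$ is a positive integer we have $A^{-a}\le A^{-1}$ for every $a\ge 1$, so $\sum_{a\ge 1}\frac{1}{a^2A^a}\le \frac{1}{A}\sum_{a\ge 1}\frac{1}{a^2}=\frac{\pi^2}{6A}$. Combining the pieces,
\[
\liminf_{k\to\infty}M_A(k)\;\ge\;\liminf_{k\to\infty}h(k)-\sup_{k}R_A(k)\;\ge\;\frac{\pi^2}{6}-\frac{\pi^2}{6A},
\]
which is the claim. I do not expect a genuine obstacle here: the argument is essentially a ``tail version'' of Lemma \ref{lem:calculuseries}, and the only point needing a word of justification is the interchange of summations defining $R_A(k)$, which is valid by Tonelli since every summand is nonnegative. (Note we only need the one-sided estimate $R_A(k)\le \pi^2/(6A)$, so no convergence of $R_A(k)$ as $k\to\infty$ is required, though it does converge to $\sum_{a\ge1}a^{-2}A^{-a}$ by the same Faulhaber argument as in Lemma \ref{lem:calculuseries}.)
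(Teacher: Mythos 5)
Your proof is correct and follows essentially the same route as the paper: decompose $M_A(k)=h(k)-(\text{head sum})$, expand the head via the Taylor series of $-\log(1-j/k)$, interchange the (nonnegative) sums, and bound $\sum_{j<\lfloor k/A\rfloor}j^{a-1}$ by $(k/A)^a/a$ to get the head below $\pi^2/(6A)$. The only cosmetic difference is that you retain the factor $A^{-a}$ before relaxing it to $A^{-1}$, whereas the paper relaxes it immediately; the argument is otherwise identical.
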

\begin{proof}
By Taylor's theorem,
\begin{equation*}
    M_A(k) = -\sum_{j=\lfloor k/A \rfloor}^{k-1} \bigg(\frac{1}{j}\bigg)\log\bigg(1 - \frac{j}{k}\bigg) = \sum_{j=\lfloor k/A \rfloor}^{k-1} \bigg(\frac{1}{j}\bigg)\bigg(\frac{j}{k}+\frac{j^2}{2k^2}+\frac{j^3}{3k^3} + \cdots\bigg).
\end{equation*}
Switching the order of summation, we can write the function of interest using the function $h(k)$ from Lemma \ref{lem:calculuseries}:
\begin{equation*}
    M_A(k) = \sum_{a = 1}^{\infty}\sum_{j=\lfloor k/A \rfloor}^{k-1}\frac{j^{a-1}}{ak^a} = h(k) - \sum_{a = 1}^{\infty}\sum_{j=1}^{\lfloor k/A \rfloor-1}\frac{j^{a-1}}{ak^a}.
\end{equation*}
Notice that we can bound the sum of the $(a-1)$ powers of $j$  as:
\begin{equation*}
    \sum_{j=1}^{\lfloor k/A \rfloor-1}j^{a-1} \leq \int_{1}^{k/A}x^{a-1}dx \leq \frac{(k/A)^a}{a} \leq \frac{k^a}{aA}.
\end{equation*}
Consequently:
\begin{equation*}
    \sum_{a = 1}^{\infty}\sum_{j=1}^{\lfloor k/A \rfloor-1}\frac{j^{a-1}}{ak^a} \leq \sum_{a = 1}^{\infty}\frac{1}{Aa^2} = \frac{\pi^2}{6A}.
\end{equation*}
Combined with Lemma \ref{lem:calculuseries}, we achieve the result.
\end{proof}
\begin{lemma}\label{lem:hotdogsandbeans}
$B_k$ is uniformly bounded above by a constant $\rho < 12$ for all $k$.
\end{lemma}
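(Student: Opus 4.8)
The plan is to recognize $B_k$ as a Riemann sum of an explicit positive decreasing function and to compare it with the corresponding integral. First I would reindex the sum by setting $m = k-j$; since $\log\bigl(1-\tfrac{j}{k}\bigr)^2 = \bigl(\log\tfrac{m}{k}\bigr)^2 = \bigl(\log\tfrac{k}{m}\bigr)^2$ and $\tfrac1j = \tfrac1{k-m}$, this gives
\[
B_k \;=\; \sum_{m=1}^{k-1} \frac{\bigl(\log(k/m)\bigr)^2}{k-m} \;=\; \frac{1}{k}\sum_{m=1}^{k-1} f\!\left(\frac{m}{k}\right),\qquad\text{where}\quad f(x) := \frac{\bigl(\log(1/x)\bigr)^2}{1-x}.
\]

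Next I would prove that $f$ is positive and strictly decreasing on $(0,1)$. Substituting $x = e^{-t}$ with $t>0$ turns this into the claim that $g(t) := t^2/(1-e^{-t})$ is increasing on $(0,\infty)$; differentiating, the numerator of $g'(t)$ equals $2t(1-e^{-t}) - t^2 e^{-t}$, which has the same sign as $2e^{t} - (2+t)$, and this is positive for all $t>0$ since the convexity estimate $e^{t} > 1+t$ already forces $2e^t > 2 + 2t > 2 + t$. Once $f$ is known to be decreasing, the elementary bound $\tfrac1k f(m/k) \le \int_{(m-1)/k}^{m/k} f(x)\,dx$ holds for each $m$; summing over $1 \le m \le k-1$ and using $f \ge 0$ yields $B_k \le \int_0^1 f(x)\,dx$.

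Finally I would estimate this integral. Expanding $\tfrac1{1-x} = \sum_{n\ge 0} x^n$ and using $\int_0^1 x^n (\log x)^2\,dx = 2/(n+1)^3$ gives $\int_0^1 f(x)\,dx = 2\sum_{n\ge 1} n^{-3} = 2\zeta(3) < 3$. (If one prefers to avoid $\zeta(3)$, one can split the integral at $x=\tfrac12$, using $\tfrac1{1-x}\le 2$ on $[0,\tfrac12]$ together with $-\log x \le \tfrac{1-x}{x} \le 2(1-x)$ on $[\tfrac12,1]$, which gives the cruder bound $\int_0^1 f \le 4+1 = 5$.) In either case $B_k \le \rho$ for every $k \ge 1$ with $\rho := 2\zeta(3) < 12$, as required.

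The estimates above are all routine; the only step needing a genuine (though very short) argument rather than bookkeeping is the monotonicity of $f$, i.e. that $g(t) = t^2/(1-e^{-t})$ is increasing, and even that reduces immediately to $e^t > 1+t$.
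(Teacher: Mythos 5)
Your proof is correct, and it takes a genuinely different route from the paper's. The paper expands $\bigl(\log(1-\tfrac{j}{k})\bigr)^2$ as a power series in $j/k$ via the Cauchy product, writes $B_k=\sum_{a}S_a\sum_{j=1}^{k-1}j^{a-1}/k^a$ with $S_a=\sum_{i=1}^{a-1}\tfrac{1}{i(a-i)}$, and then bounds $S_a\le 4H_a/a\le 4(\log a+1)/a$ and $\sum_j j^{a-1}/k^a\le 1/a$ to obtain $B_k\le\sum_{a\ge 1}4(\log a+1)/a^2<12$; this is the same series-swapping device used in Lemmas \ref{lem:calculuseries} and \ref{lem:cherry}, which is presumably why it is reused here. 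You instead reindex by $m=k-j$, recognize $B_k$ as $\tfrac1k\sum_m f(m/k)$ for the decreasing function $f(x)=(\log(1/x))^2/(1-x)$, and dominate it by $\int_0^1 f=2\zeta(3)\approx 2.40$. All your steps check out: the reindexing identity, the monotonicity of $f$ (your reduction to $2e^t>2+t$ via $e^t>1+t$ is fine), the comparison $\tfrac1k f(m/k)\le\int_{(m-1)/k}^{m/k}f$ for a decreasing integrand, integrability of $f$ at both endpoints, and the evaluation $\int_0^1 x^n(\log x)^2\,dx=2/(n+1)^3$. Your argument is somewhat cleaner and gives a sharper constant, which can only help in the downstream inequality \eqref{dog}; the paper's version is cruder but stays entirely within the elementary series manipulations already set up for the neighboring lemmas.
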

\begin{proof}

We proceed as in the proof of the lemma above and we write
\begin{equation*}
    B_k = \sum_{a = 1}^{\infty}S_a\sum_{j=1}^{k-1}\frac{j^{a-1}}{k^a}, \quad \text{ with }S_a = \sum_{i=1}^{a-1}\frac{1}{i(a-i)}.
\end{equation*}
We can bound $S_a$ as
\begin{equation*}
    S_a = \sum_{i=1}^{a-1}\frac{1}{i(a-i)} \leq 2\sum_{i=1}^{a}\frac{1}{i(a/2)} = \frac{4H_{a}}{a} \leq \frac{4(\log a+1)}{a}.
\end{equation*}
Consequently:
\begin{equation*}
    B_k \leq \sum_{a = 1}^{\infty}\frac{4(\log a+1)}{a^2} < 12,
\end{equation*}
which ends the proof.
\end{proof}
We are now ready to prove Proposition \ref{prop:adidas}.

\begin{proof}[Proof of Proposition \ref{prop:adidas}]
First, we let $N_0$ be large enough so that that $q_{N,k}$ is a valid CDF and thus satisfies $\eqref{eq:probdistcondition}$. It is also not difficult to see that $q_{N,k}$ satisfies \eqref{eq:sourcream}. We now prove that inequality \eqref{eq:potato} is satisfied for each $k$.  We first consider the case where  $\log k < \sqrt{Nc}$. 

Note that $q_{N,j} \neq q_{N,j}'$ only if $j \leq K$. We only consider $k$ sufficiently large ($k \geq 2K$),  so that the difference $q_{N,j}-q_{N,j}'$ is given by $\frac{e_j}{N}$, where $e_j = b_j - \frac{\log(j)^2}{c}$.

Choose $\bar{K} \geq 2K$ large such that
\begin{equation}\label{JoniMitchell}
    8\sum_{j=1}^{K-1}e_j \leq \epsilon \bar{K}\log(\bar{K}).
\end{equation}

We now calculate the effect of the difference $q_{N,j}-q_{N,j}'$ on the right hand side of \eqref{eq:potato}.
As before, set 
 \begin{eqnarray*}
X&=&  \frac{1}{2} \sum_{\ell =1}^{k-1} (q_{N,\ell}-q_{N,\ell+1})(q_{N,k-\ell}-q_{N,k}), \\
 X' &=& \frac{1}{2} \sum_{\ell =1}^{k-1} (q_{N,\ell}'-q_{N,\ell+1}')(q_{N,k-\ell}'-q_{N,k}').
\end{eqnarray*}

Taking derivative of $X$ with respect to $q_{N,j}$, we get
\begin{equation*}
 q_{N,k-j} - q_{N,k-j+1} \leq \frac{2\log (k-j)}{(k-j)cN}.
\end{equation*}
Thus,
\begin{equation}\label{eq:carhorn}
    |X-X'| = \sum_{j = 1}^{K-1} \frac{e_j}{N}\frac{2\log (k-j)}{(k-j)cN} \leq  4\sum_{j=1}^{K-1}e_j\frac{\log(k)}{kN^2} \leq \frac{\epsilon\log(k)^2}{2N^2}.
\end{equation}
The last inequality holds for $k \geq \bar{K}$.  This shows that inequality \eqref{eq:potato} is satisfied for the case $\bar{K} \leq k$ and $\log(k) < \sqrt{Nc}$ since the bound obtained in \eqref{eq:carhorn} is less than the breathing room from Lemma \ref{lem:king}.  

Now, we consider the case where $\sqrt{Nc} \leq \log(k) < \sqrt{(N+1)c}$.  Then for $j$ in the range $\sqrt{Nc} \leq \log(j) < \sqrt{(N+1)c}$, we need to increase from $q_{N,j}'$ to $0$. 
Repeating the strategy above all partial derivatives are non-negative except for
\begin{equation*}
    \frac{\partial (X'+q_{N,k}')}{\partial q_{N,k}'} = 1 - q_{N,1}' + q_{N,k}' = q_{N,k}' = 1 - \frac{\log(k)^2}{cN} \geq 1 - \frac{(N+1)c}{cN} = -\frac{1}{N}.
\end{equation*}
However, $\frac{1}{N}$ is also the amount that we need to increase $q_{N,k}'$ to $0$ in this range and the cost incurred is bounded above by
\begin{equation*}
    \frac{1}{N^2} \leq \epsilon\frac{Nc}{4N^2} \leq \epsilon\frac{\log(k)^2}{4N^2},
\end{equation*}
which is less than the remaining breathing room. Thus, inequality \eqref{eq:potato} is satisfied for all $k$ so that $ \log (k) \leq \sqrt{(N+1)c}$. In the case $ \log (k) > \sqrt{(N+1)c}$, we do not need to check \eqref{eq:potato} as, by definition, $q_{N+1,k}=0$ is a lower bound for $p_{N+1,k}$.

\end{proof}
\subsection{Construction of the sequence $b_{k}$}
We now construct the sequence that we use in \eqref{def:auffingerfamilyvacation}. In particular we verify that for any $c<\pi^{2}/3$ we can find a sequence $b_{k}$ and $K\geq 1$ so that \eqref{eq:bkisaniceconstant} holds and $b_K = c^{-1}\log(K)^2$. 

We start by the following Lemma. 
\begin{lemma}
There exist integers $K_0$ and $N_{1}$ such that for all $k \geq K_0$, \[1 - \frac{\log(k)^2}{N} \leq p_{N+N_{1},k} \quad \text{ for all  } N \geq 1.\]
\end{lemma}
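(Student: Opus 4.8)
The plan is to exhibit $q_{N,k}=1-\log(k)^2/N$ (truncated at $0$) as the large-$k$ part of a lower bound model in the sense of Section 3, i.e. to build an array satisfying \eqref{eq:potato}, \eqref{eq:probdistcondition} and \eqref{eq:sourcream} whose entries with $k$ large are exactly $1-\log(k)^2/N$. Since $1<\pi^{2}/3$, this is just the $c=1$ instance of the construction \eqref{def:auffingerfamilyvacation}, so the proof reruns the machinery behind Proposition \ref{prop:adidas}. Concretely, I would fix $\delta>0$ small enough that $\epsilon:=\tfrac{\pi^{2}}{3}(1-\delta)-1>0$, let $K_\delta$ be as in Lemma \ref{lem:king} with $c=1$, let $\{a_k\}$ be the constants from \eqref{eq:kimjong-un}, and set
\[
q_{N,k}=\begin{cases}1-\dfrac{a_k}{N}, & k<K_0,\\ 1-\dfrac{\log(k)^2}{N}, & k\ge K_0,\ \log k<\sqrt N,\\ 0, & k\ge K_0,\ \log k\ge\sqrt N,\end{cases}
\]
for a large integer $K_0$ to be chosen.

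The choices are ordered as follows. First I would pick $K_0$ so that $K_0\ge K_\delta$, so that the largeness condition of the type \eqref{JoniMitchell}--\eqref{eq:carhorn} holds (so that replacing the pure model-$1$ values $1-\log(\ell)^2/N$ by $1-a_\ell/N$ at the finitely many indices $\ell<K_0$ perturbs the right-hand side of \eqref{eq:potato} by only $O(\log k/(kN^2))$, which is absorbed by the breathing room $\epsilon\log(k)^2/N^2$ coming from Lemma \ref{lem:king}), and — the only nonautomatic requirement — so that $a_{K_0-1}\le\log(K_0)^2$, which makes the array monotone in $k$ at the junction and hence, for $N$ large, a genuine CDF as in \eqref{eq:probdistcondition}. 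With $K_0$ fixed I would take $N_0$ large enough that the recurrences of Lemma \ref{lem:king} and of Lemma \ref{lem:Iamgettingbetter} hold for $N\ge N_0$, that $N_0\ge\max_{k<K_0}a_k$ (so no truncation occurs among the first $K_0$ entries and \eqref{eq:potato} there is literally Lemma \ref{lem:Iamgettingbetter}), and that $q_{N_0,\cdot}$ is a valid CDF. Inequality \eqref{eq:potato} for $k\ge K_0$ is then checked exactly as in the proof of Proposition \ref{prop:adidas}: Lemma \ref{lem:king} gives it for the pure array $1-\log(\ell)^2/N$ with room $\epsilon\log(k)^2/N^2$; switching the first $K_0-1$ entries to $1-a_\ell/N$ costs $O(\log k/(kN^2))$; and the tail truncation to $0$ across the range $\sqrt N\le\log k<\sqrt{N+1}$ costs $O(1/N^2)$ via the one possibly-negative partial derivative $\partial f/\partial x_k$ — all within the room. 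Finally, since for each fixed $k$ we have $p_{M,k}\to1$ as $M\to\infty$ and there are only finitely many pairs $(N,k)$ with $N<N_0$ and $0<1-\log(k)^2/N$ (resp. finitely many base-case constraints at $N=N_0$), I would pick $N_1$ large enough that $q_{N_0,k}\le p_{N_1,k}$ for all $k$ and that the remaining finitely many inequalities hold; the lower-bound induction (the $\le$-analog of Lemma \ref{lem:lemUB}, using nonnegativity of the partial derivatives of $f$ on $\mathcal S_k$) then propagates $q_{N_0+n,k}\le p_{N_1+n,k}$ to all $n\ge0$, which after relabeling $N_1$ is the assertion.

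The genuinely nonroutine point is the requirement $a_{K_0-1}\le\log(K_0)^2$ for some (hence all large) $K_0$. This forces one to know that the lower-bound constants $a_k$ — which obey the recursion \eqref{eq:kimjong-un} that, by the computation behind Lemmas \ref{lem:calculuseries} and \ref{lem:lemadad}, is exactly the one governing the leading constant of $1-p_{N,k}$ — do not outgrow $\log(k)^2$; equivalently one needs $\limsup_k a_k/\log(k)^2<1$. This is true because $a_k\sim\tfrac{3}{\pi^2}\log(k)^2$ and $3/\pi^{2}<1$, so the real work is to establish this asymptotic (or merely an upper bound $a_k\le(1-\delta_0)\log(k)^2$ for large $k$); once that is in hand, everything above is a transcription of the Section 3 argument with the parameter $c$ specialized to $1$.
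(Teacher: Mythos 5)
Your overall route is the paper's route: specialize the Section 3 construction to $c=1$, glue $1-a_k/N$ (the array of Lemma \ref{lem:Iamgettingbetter}) to $1-\log(k)^2/N$ at a junction $K_0$, get \eqref{eq:potato} from the breathing room of Lemma \ref{lem:king} plus a perturbation estimate, and seed the induction using $p_{M,k}\to 1$ for finitely many $k$. However, as written the argument has a genuine gap, and it sits exactly where you flag it. Two separate things hinge on the comparison $a_k\le\log(k)^2$: (i) monotonicity of the glued array at the junction (needed for the array to lie in $\mathcal S_k$, so that the monotone-partials induction applies), and (ii) the window $K_0\le k<\bar K$. On (ii): the perturbation cost of replacing the first $K_0-1$ entries is roughly $4\bigl(\sum_{j<K_0}e_j\bigr)\log(k)/(kN^2)$ as in \eqref{eq:carhorn}, and this is absorbed by the breathing room $\epsilon\log(k)^2/N^2$ only once $k\ge\bar K$ with $\bar K$ satisfying \eqref{JoniMitchell} — a threshold that is in general much larger than $K_0$ (the paper ends up with $K=33$ but $\bar K=12000$). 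On that intermediate window the recurrence is \emph{not} verified; instead one must supply the comparison $1-\log(k)^2/N\le p_{N+N_1,k}$ directly as the hypothesis of Proposition \ref{prop:adidas}, which again reduces to $a_k\le\log(k)^2$ there. Your proposal covers neither point: you assert $a_k\sim\tfrac{3}{\pi^2}\log(k)^2$ and explicitly defer its proof ("the real work"), so the key inequality is never established.

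The paper sidesteps the asymptotic entirely: since both (i) and (ii) only require $a_k\le\log(k)^2$ on the \emph{finite} range $33\le k\le 12000$, it fixes the constants ($\delta=2/3$, $A=8$, $K=33$, $\bar K=12000$, verified against \eqref{shoehorn}, \eqref{dog}, \eqref{JoniMitchell} and $\bar K\ge K_\delta$ via an explicit lower bound on $h(k)$ from Lemma \ref{lem:calculuseries}) and checks $a_k\le\log(k)^2$ numerically on that range, with Lemma \ref{lem:Iamgettingbetter} converting this into the needed bound on $p$. So your plan is repairable without proving any asymptotic for the sequence \eqref{eq:kimjong-un} — replace the asymptotic claim by the finite computation — but in its current form the decisive step is missing, and the intermediate window $K_0\le k<\bar K$ needs to be treated as a hypothesis-verification rather than as an instance of the breathing-room argument.
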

\begin{proof}
As shown in Proposition \ref{prop:adidas}, this follows by showing that $b_k = \log(k)^2$ for $K \leq k \leq \bar{K}$ is a lower bound for $p_{N,k}$. Lets first  calculate how large the constant $\bar{K}$ from Proposition \ref{prop:adidas} needs to be in this case.  We need to check \eqref{shoehorn},\eqref{dog}, \eqref{JoniMitchell}, and that $\bar{K} \geq K_\delta$.  Set $\delta = 2/3$, $A = 8$, $K = 33$. For $k < 33$ we set $b_{k}=a_{k}$, from \eqref{eq:kimjong-un}.    For these choices, Equation \eqref{dog} is immediately satisfied for $k \geq 100$ and  $\epsilon \geq \frac{1}{12}$.  Equation \eqref{JoniMitchell} is verified numerically for $k \geq 1000$, while \eqref{shoehorn} holds for $k \geq 12000$.  In order for $\bar{K} \geq K_{\delta}$, we need $h(k) \geq \pi^2/6(1-1/11)$ for all  $k \geq \bar{K}$. This holds because 
\[
\frac{\pi^2}{6A} + \frac{\pi^2}{6\cdot 11} \leq \frac{\pi^2\delta/3}{6}
\]  and that for $k \geq 10000$, $f(k) \geq 1.50 \geq \pi^2/6(1-1/11)$. Indeed,
\begin{equation*}
\begin{split}
    h(k) = \sum_{a = 1}^{\infty}\sum_{j=1}^{k-1}\frac{j^{a-1}}{ak^a} &\geq \sum_{a = 1}^{7}\sum_{j=1}^{k-1}\frac{j^{a-1}}{ak^a} \\
    &= \frac{600-7826k^2+57435k^4-316890k^5+266681k^6}{176400k^6} \\
    &\geq 1.51 - \frac{382751}{176400k} \geq 1.5.
\end{split}
\end{equation*}
Thus, we set $\bar{K} = 12000$ and check that for $K \leq k \leq \bar{K}$ that $a_k \leq \log(k)^2$.
\end{proof}

\subsubsection{Choosing of the constants}
We now fix the constants that we use in the remaining of this section. Choose $\delta > 0$ such that $\pi^2/3(1-\delta) > c$.
Then set $\tilde{c} = \frac{\pi^2}{3}(1-\delta)$. Next, recall the constants $a_{j}$ from \eqref{eq:kimjong-un} and let $K_{0}=33$.

Set $\triangle_{0}=\tilde{c}-1$, $C_1 = \sum_{j=1}^{K_0-1}4a_j$, and $K_r = \lfloor{e^{\sqrt{r+r_0}}}\rfloor$ with $r_0$ large enough so that 

\[ K_1 \geq K_{\delta}, \; \frac{C_1\sqrt{2+r_0}}{e^{2+r_0}-1} \leq \frac{\triangle_0}{2\tilde{c}^2}, \text{ and } K_2 - K_1 > K_0.
\]

Next, set 
\begin{eqnarray*}
\quad C_2 &=& 3+r_{0}, \quad \quad \quad C_3 = \sqrt{1+r_0}, \\
C_4 &=& \sqrt{1+r_0}\frac{4C_3\pi^2}{3}, \quad   C_5 = C_2 + 2C_4, \\
C_6 &=& 2r_0\triangle_{0}+12, \quad C_{total} = C_6 + C_1 + 2C_5. 
\end{eqnarray*}

Let $a$ be so that
$a \leq 1$, $a \leq \frac{1}{2\tilde{c}C_{total}}$, $a \leq \frac{1}{4K_{0}\tilde{c}^{2}}$ and $b$ chosen small enough so that
$b \leq \frac{a}{2(r_0+3)}.$
Last, define
\begin{equation*}
     \triangle_r = \begin{cases}
     \tilde{c} - 1, & r=1 \\
     \triangle_1r^{-b}, & r > 1,
     \end{cases}
\end{equation*}
 $c_r = \tilde{c} - \triangle_r$ and define
\begin{equation*}
q_{N,k} = 
\begin{cases}
1 - \frac{a_k}{N} & k < K_0, \\
1 - \frac{\log(k)^2}{c_iN} & K_i \leq k < K_{i+1}.
\end{cases}
\end{equation*}
The intuition is that we are constructing a sequence of step functions that are valid lower bound models. These models will have jumps at $k = K_r$ of size $\delta_r$ to $c_r$. 
\begin{proposition}
For each $k$, there exists $N_k$ such that $q_{N,k}$ satisfies \eqref{eq:potato} for $N \geq N_k$.
\end{proposition}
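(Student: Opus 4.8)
The plan is to fix $k$ once and for all, choose $N_k$ large, and verify \eqref{eq:potato} at index $k$ for all $N \ge N_k$ by the scheme of Proposition \ref{prop:adidas}: compare the step-function array $q_{N,\ell}$ with the ``ideal'' single model array $q'_{N,\ell} = 1 - \log(\ell)^2/(c_i N)$, for which Lemma \ref{lem:king} already supplies breathing room, and then pay for switching $q'$ back to $q$. Note first that once $N_k$ is large enough every value $q_{N,\ell}$ with $\ell \le k$ lies in $(0,1)$ and is given by an interior branch of the definition, so no truncation cases arise. If $k < K_0$, then $q_{N,\ell} = 1 - a_\ell/N$ for every $\ell \le k$, and \eqref{eq:potato} at index $k$ is exactly the inequality established in Lemma \ref{lem:Iamgettingbetter}, so we are done in that case.

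Assume then $k \ge K_0$ and let $i$ be the index with $K_i \le k < K_{i+1}$. Since $c_i < \tilde c = \frac{\pi^2}{3}(1-\delta)$ and $k \ge K_i \ge K_1 \ge K_\delta$, Lemma \ref{lem:king} with $c = c_i$ gives that $q'$ satisfies \eqref{eq:potato} at index $k$ with breathing room at least $\frac{\triangle_i}{c_i^2}\frac{\log(k)^2}{N^2}$ (the quantity $\frac{1}{c_i^2}\big(\frac{\pi^2}{3}(1-\delta)-c_i\big)\frac{\log(k)^2}{N^2}$ of that lemma). The true array agrees with $q'$ for all $\ell \ge K_i$, in particular at $\ell = k$; it differs only on $\ell < K_i$, where for $K_0 \le \ell < K_i$ one has $q_{N,\ell} \le q'_{N,\ell}$ because $c_j \le c_i$ for $j \le i$, while the finitely many terms with $\ell < K_0$ are handled by a crude bound below. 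Since the right side of \eqref{eq:potato} at index $k$ is, as a function of the variables $q_{N,\ell}$ with $\ell < k$, non-decreasing with partial derivatives $q_{N,k-\ell}-q_{N,k-\ell+1} \le \frac{2\log(k-\ell)}{(k-\ell)N}$ (cf.\ \eqref{eq:PD}, using $c_j \ge c_1 = 1$), switching $q' \to q$ lowers the right side by at most $\sum_{\ell < K_i}\big(q'_{N,\ell}-q_{N,\ell}\big)^{+}\frac{2\log(k-\ell)}{(k-\ell)N}$, and it suffices to bound this by the breathing room, possibly after enlarging $N_k$.

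To estimate the switching cost, split the sum by the block of $\ell$. The terms $\ell < K_0$ form a finite sum with $\big(q'_{N,\ell}-q_{N,\ell}\big)^{+} \le a_\ell/N$, contributing at most $C_1\frac{\log k}{kN^2}$ with $C_1 = \sum_{j<K_0}4a_j$, which is negligible for $k$ large. For $K_j \le \ell < K_{j+1}$ with $1 \le j \le i-1$ we have $q'_{N,\ell}-q_{N,\ell} = \log(\ell)^2\big(\frac{1}{c_j}-\frac{1}{c_i}\big)/N$; telescoping $\frac{1}{c_j}-\frac{1}{c_i} = \sum_{m=j}^{i-1}\big(\frac{1}{c_m}-\frac{1}{c_{m+1}}\big)$, using $\frac{1}{c_m}-\frac{1}{c_{m+1}} = \frac{\triangle_m-\triangle_{m+1}}{c_m c_{m+1}} \le \triangle_1 b\,m^{-b-1}$ (mean value theorem on $r \mapsto \triangle_1 r^{-b}$), and interchanging the $j$- and $m$-sums, the block part is at most $\sum_{m=1}^{i-1}\triangle_1 b\,m^{-b-1}\frac{\log(K_{m+1})^2}{N}\sum_{\ell=K_1}^{K_{m+1}-1}\frac{2\log(k-\ell)}{(k-\ell)N}$, whose inner sum telescopes to at most $\frac{2\log k\,(K_{m+1}-K_1)}{(k-K_{m+1})N}$. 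Using $\log(K_{m+1})^2 \le m + C_2$ with $C_2 = 3+r_0$, $K_{m+1} \le e^{\sqrt{m+1+r_0}}$ and $k \ge K_i \ge \tfrac12 e^{\sqrt{i+r_0}}$, the ratio $K_{m+1}/k$ decays geometrically in $i-m$, so the $m$-sum is dominated by its terms with $m$ close to $i$ and is of size $O\big(\triangle_1 b\,i^{-b}\frac{\sqrt{i+r_0}\,\log k}{N^2}\big)$; since $\log k \le \sqrt{i+1+r_0}$ this is $O\big(b\,c_i^2\cdot\frac{\triangle_i}{c_i^2}\frac{\log(k)^2}{N^2}\big)$, i.e., a fixed multiple of $b$ times the breathing room. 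Tracking constants, the whole cost is $\le 2\tilde c\,C_{total}\,b\cdot\frac{\log(k)^2}{N^2}$ with $C_{total} = C_6 + C_1 + 2C_5$, and since $b \le \frac{a}{2(r_0+3)} \le \frac{1}{4\tilde c^2 C_{total}(r_0+3)}$ this is at most $\frac{\triangle_i}{c_i^2}\frac{\log(k)^2}{N^2}$; enlarging $N_k$ to absorb the ``for large $k$ (hence $N$)'' steps finishes the proof.

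The main obstacle is exactly this uniform-in-$i$ control of the switching cost in the last step: the breathing room in block $i$ is only of order $\triangle_i\log(K_i)^2/N^2 \asymp i^{1-b}/N^2$, while there are $i-1$ lower blocks whose constants must be switched, so a naive bound loses a factor $i$. The argument survives because each increment $\triangle_m-\triangle_{m+1}$ is $O(b\,m^{-b-1})$, buying an overall factor $b$ that can be made arbitrarily small, and because the thresholds $K_r = \lfloor e^{\sqrt{r+r_0}}\rfloor$ grow fast enough that the telescoped derivative weights $\sum_{\ell<K_{m+1}}\big(q_{N,k-\ell}-q_{N,k-\ell+1}\big)$, summed over $m$, lose only a factor $\sqrt{i}\approx\log k$ over a single block. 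Choosing $r_0$ large and $a,b$ small so that the resulting chain of inequalities closes simultaneously for every $i$ and every large $N$ is the technical heart; the remainder is bookkeeping of the kind already done in Lemmas \ref{lem:king}--\ref{lem:hotdogsandbeans}.
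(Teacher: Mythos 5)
Your proposal follows essentially the same route as the paper: invoke Lemma \ref{lem:king} with the block's own constant $c_i$ to get the breathing room $\frac{\triangle_i}{c_i^2}\frac{\log(k)^2}{N^2}$, then bound the cost of switching the lower-index variables from the single-$c_i$ model to the stepped model by telescoping the increments $\triangle_m-\triangle_{m+1}=O(b\,m^{-b-1})$ against the partial-derivative weights and exploiting the growth $K_r=\lfloor e^{\sqrt{r+r_0}}\rfloor$, exactly as in the paper's estimate of $E(r)$ and its Lemma \ref{lem:C5}. The differences are organizational only (you telescope $1/c_j-1/c_i$ rather than $c_{r+1}-c_{\alpha_j}$, and you fold the paper's separate handling of the finitely many indices with $k-\ell<K_0$, its term $E_1$, into the general bookkeeping), so the argument is correct and matches the paper's.
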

\begin{proof}
$N_k$ is first chosen large enough so that $\{ q_{N,j}\}_{j\geq 1}$ is a valid CDF up to $j=k$.  For $k < K_0$, we refer the reader to our exact calculation of $a_k$ in \eqref{eq:kimjong-un}.  For $K_0 \leq k < K_2$, we refer to Lemma \ref{lem:king} above.  Next, define
\begin{equation*}\begin{split}
    \delta_{r+1}= \triangle_{r+1} - \triangle_r = \triangle_1(r^{-b} - (r+1)^{-b}) \leq \triangle_1b r^{-(1+b)} &= \frac{b\triangle_r}{r} 
    \\ &\leq \frac{a\triangle_{r+1}}{r+r_0+2} \leq \frac{a\triangle_{r+1}}{\log(K_{r+1})^2}.
\end{split}
\end{equation*}

Fix $r \geq 1$ and let $K_{r+1}~\leq k <~K_{r+2}$.  As in the proof of Proposition \ref{prop:adidas}, we will have to estimate the errors when we switch models. We change the variables from model $r+1$ to model 1 by interpolating the $c$ values continuously from $c_{r+1}$ to $c_r$, and then to $c_{r-1}$, all the way to $1$.  Precisely, we choose non-increasing functions $c_i(t)$ from $c_{r+1}$ to $c_i$ as follows 
\begin{equation*}
    c_i(0) = c_{r+1}, c_i(1) = c_{i}, |c_i(t) - c_{i+1}(t)| \leq \delta_{i+1}
\end{equation*}
and we set
\begin{equation*}
    q_{N,m}(t) = 1 - \frac{\log(m)^2}{c_i(t)N}, \text{ for } K_i \leq m < K_{i+1}.
\end{equation*}
When all $q_{N,j}$ are equal to $1 - \frac{\log(j)^2}{Nc_{r+1}}$, we can  apply Lemma \ref{lem:king} with $c=c_{r+1}$ to obtain that the breathing room for this model is equal to
\begin{equation*}
     \epsilon_{r+1} = \frac{1}{c_{r+1}^2}(\frac{\pi^2}{3}(1-\delta) - c_{r+1}) \geq \frac{1}{\tilde{c}^2}\triangle_{r+1}.
\end{equation*}
 As a result, we have:
\begin{equation*}
    \frac{\epsilon_{r+1}\log(k)^2}{N^2} \geq \frac{(r+1)\triangle_{r+1}}{N^2\tilde{c}^2} \geq \frac{r\triangle_{r}}{N^2\tilde{c}^2}.
\end{equation*}
To shorten the notation, let $f(t)=f(q_{N,1}(t), \ldots, q_{N,k}(t))$, where $f$ is defined in \eqref{def:mainfunction}. Now

\begin{equation*}
f(0) - f(1) \leq \sum_{j = 1}^{k} (q_{N,j}(0)-q_{N,j}(1))\sup_{t\in (0,1)} \frac{\partial f(q_{N,1}(t), q_{N,2}(t),\dots,q_{N,k}(t))}{\partial q_{N,j}} :=E(r).
\end{equation*}
The proof of the proposition will be complete if we show the following claim.
\begin{claim} For any $r \geq 1$,
\begin{equation*}
    E(r) \leq \frac{r\triangle_{r}}{N^2\tilde{c}^2}.
\end{equation*}
\end{claim}
We start by
\begin{proposition} \label{prop:pain}
The following holds for $1 \leq j <k$
\begin{equation}\label{eq:bounds.}
\begin{split}
    \sup_{t\in (0,1)}\frac{\partial f}{\partial q_{N,j}}(t) &= \sup_{t\in (0,1)} q_{N,k-j}(t) - q_{N,k-j+1}(t)  \\
        &\leq 
    \begin{cases} 
      \frac{1}{N}, & K_{i}\leq k-j<K_{i+1}-1 \text{ and }  k-j < 3, 
      \cr \frac{2\log(k-j)}{(k-j)N}, & K_{i} \leq k-j<K_{i+1}-1 \text{ and } k-j \geq 3.
   \end{cases}
\end{split}
\end{equation}
\end{proposition}
\begin{proof}
Let $m = k-j$.  We have
\begin{equation*}
    q_{N,m}(t) = 1 - \frac{\log(m)^2}{c_i(t)N}, \quad q_{N,m+1}(t) = 1 - \frac{\log(m+1)^2}{c_i(t)N}.
\end{equation*}
And we can bound their difference:
\begin{equation*}
    q_{N,m}(t) - q_{N,m+1}(t) = \frac{1}{c_i(t)N}\bigg(\log(m+1)^2 - \log(m)^2\bigg) \leq \frac{1}{N}\bigg(\log(m+1)^2 - \log(m)^2\bigg).
\end{equation*}
The bound \eqref{eq:bounds.} now follows as $\log(m+1)^2 - \log(m)^2 \leq \frac{2\log(m)}{m}$ if $m \geq 3$ and $\log(m+1)^2 - \log(m)^2 \leq 1$ is $m = 1,2$.  
\end{proof}
\begin{proof}
We estimate the differences $q_{N,j}(0) - q_{N,j}(1)$. If $K_{\alpha_{j}}\leq k-j<K_{\alpha_{j}+1}-1$, $\alpha_{j} \geq 0$, then
\begin{equation*}
    q_{N,j}(0) - q_{N,j}(1) = (1 - \frac{\log(j)^2}{c_{r+1}N}) - (1 - \frac{\log(j)^2}{c_{\alpha_{j}}N}) \leq \frac{\log(j)^2(c_{r+1} - c_{\alpha_{j}})}{N}. 
\end{equation*}
By Propostion \ref{prop:pain} and the above display, if we set
\[ E_{1}=  \sum_{j=k-K_{0}+1}^{k}  (q_{N,j}(0)-q_{N,j}(1))\sup_{t\in (0,1)} \frac{\partial f(q_{N,1}(t), q_{N,2}(t),\dots,q_{N,k}(t))}{\partial q_{N,j}},
\]
then
\begin{equation*}
\begin{split}
    E(r) &\leq \sum_{j = 1}^{k-K_{0}}\frac{\log(j)^2(c_{r+1} - c_{\alpha_j})}{N^2}\bigg(\log(k-j+1)^2 - \log(k-j)^2\bigg) + E_{1}\\
    &\leq \sum_{j = 1}^{k-K_{0}}\frac{\log(j)^2(\sum_{i=\alpha_j+1}^{r+1}\delta_i)}{N^2}\bigg(\log(k-j+1)^2 - \log(k-j)^2\bigg) + E_{1} \\
&\leq \sum_{i = 1}^{r+1}\delta_i\sum_{j=1}^{\min(K_i, k-K_{0})}\frac{\log(j)^2}{N^2}\bigg(\log(k-j+1)^2 - \log(k-j)^2\bigg) + E_{1} \\ 
&\leq \sum_{i = 1}^{r+1}\delta_i\sum_{j=1}^{\min(K_i, k-K_{0})}\frac{\log(K_i)^2}{N^2}\bigg(\log(k-j+1)^2 - \log(k-j)^2\bigg) + E_{1}\\ 
&= \sum_{i = 1}^{r+1}\delta_i\frac{\log(\min(K_i, k-K_{0}))^2}{N^2}\bigg(\log(k)^2 - \log(k-\min(K_i, k-K_{0}))^2\bigg) + E_{1} \\
&\leq \sum_{i = 1}^{r+1}\frac{a\triangle_i}{N^2}\bigg(\log(k)^2 - \log(k-\min(K_i, k-K_{0}))^2\bigg) + E_{1} \\ 
&\leq \sum_{i = 1}^{r}\frac{a\triangle_i}{N^2}\bigg(\log(K_{r+1})^2 - \log(K_{r+1}-\min(K_i, k-K_{0}))^2\bigg)+ \frac{aC_2\triangle_rr}{N^2} +E_{1}.
\end{split}
\end{equation*}
The last inequality follows because differences are largest for $k$ smaller, and we have $k~\geq~K_{r+1}$.  Now, observe that 
\begin{equation*}
\begin{split}
    \sum_{i = 1}^{r}\frac{a\triangle_i}{N^2}&\bigg(\log(K_{r+1})^2 - \log(K_{r+1}-\min(K_i, k-K_{0}))^2\bigg) \\
    &\quad \leq \sum_{i = 1}^{r}\frac{a\triangle_i}{N^2}\bigg(\log(K_{r+1})^2 - \log(K_{r+1}-K_i)^2\bigg) .
  \end{split}
\end{equation*}
Combining the last display, Lemma \ref{lem:C5} below, and the sequence of inequalities above we arrive at 
\begin{equation}\label{eq:waitingforTuca}
 E(r) \leq  \frac{aC_5\triangle_rr}{N^2} + E_{1}.
\end{equation}

\begin{lemma}\label{lem:C5}
Set $K_n = \lfloor e^{\sqrt{r+r_0}}\rfloor$.  Then $S(r) = \sum_{m = 1}^{r-1}\frac{\triangle_m}{\triangle_r}(\log(K_r)^2 - \log(K_{r}-K_m)^2)~\leq~C_4r$ for all $r$.
\end{lemma}

\begin{proof}
First, we can take $K_r = e^{\sqrt{r+r_0}}$ as this will multiply each term in the summation by a factor of $2$.  Using the fact that $b<1/2$, we obtain
\begin{equation*}
    S(r) \leq \sum_{m = 1}^{r-1}\frac{\sqrt{r}}{\sqrt{m}}(\log(K_r)^2 - (\log(K_r)+\log(1-\frac{K_m}{K_r}))^2) \leq -2\sum_{m = 1}^{r-1}\frac{\sqrt{r}}{\sqrt{m}}\log(K_r)\log(1-\frac{K_m}{K_r}).
\end{equation*}
Now, since $\frac{K_m}{K_r} < 1$, we can expand using $\log$ power series.
\begin{equation*}
    S(r) \leq 2\log(K_r)\sum_{m=1}^{r-1}\sum_{j=1}^{\infty}\frac{1}{j}\frac{\sqrt{r}}{\sqrt{m}}\bigg(\frac{K_m}{K_r}\bigg)^j.
\end{equation*}
Since all terms are positive, we can switch the order of summation:
\begin{equation*}
    S(r) \leq 2\log(K_r)\sum_{j=1}^{\infty}\frac{1}{j}\sum_{m=1}^{r-1}\frac{\sqrt{r}}{\sqrt{m}}\bigg(\frac{K_m}{K_r}\bigg)^j.
\end{equation*}
Notice that
\begin{equation*}
\begin{split}
    \sum_{m=1}^{r-1}\frac{\sqrt{r}}{\sqrt{m}}\bigg(\frac{K_m}{K_r}\bigg)^j &= \sqrt{r}e^{-j\sqrt{r+r_0}}\sum_{m=1}^{r-1}\frac{1}{\sqrt{m}}e^{j\sqrt{m+r_0}} \\ 
    &\leq  C_3\sqrt{r+r_0}e^{-j\sqrt{r+r_0}}\sum_{m=1}^{r-1}\frac{1}{\sqrt{m+r_0}}e^{j\sqrt{m+r_0}} \\
    &\leq C_3\sqrt{r+r_0}e^{-j\sqrt{r+r_0}}\int_{r_0}^{r+r_0}\frac{1}{\sqrt{x}}e^{j\sqrt{x}}dx \\ &= C_3\sqrt{r+r_0}e^{-j\sqrt{r+r_0}}\frac{2}{j}\int_{0}^{j\sqrt{r+r_0}}e^{y}dy  \leq C_3\sqrt{r+r_0}e^{-j\sqrt{r+r_0}}\frac{2}{j}e^{j\sqrt{r+r_0}} \\
    &= C_3\frac{2\sqrt{r+r_0}}{j}.
\end{split}
\end{equation*}
In total we have:
\begin{equation*}
    S(r) \leq 2\sqrt{r}\sum_{j=1}^{\infty}\frac{2C_3\sqrt{r+r_0}}{j^2} \leq C_4r,
\end{equation*}
which concludes the proof.
\end{proof}

It remains to bound $E_{1}$. From Proposition \ref{prop:pain}, 
\begin{equation}\label{eq:justwenttothebathroom}
\begin{split}
    E_1 \leq \sum_{j = k-K_{0}+1}^{k} \frac{\log(j)^2(c_{r+1} - c_{r})}{N^{2}} &\leq  \frac{K_{0}\log(K_{r+2})^{2} \delta_{r+1}}{N^2} 
   \\ &\leq \frac{a \triangle_{r+1}K_{0} \log(K_{r+2})^{2}}{N^{2}\log(K_{r+1})^{2}} \leq \frac{2aK_{0}\triangle_0}{N^2} \leq \frac{r\triangle_{r}}{2N^2\tilde{c}^2},
\end{split}
\end{equation}
where the last inequality is due to our choice of $a$. 
Combining \eqref{eq:waitingforTuca} with \eqref{eq:justwenttothebathroom}  we proved the Claim.
\end{proof}

To complete the proof of the lower bound, we still need to verify that $q_{N+1,k}~\leq~p_{N+1+N_{1},k}$. Although $q_{N,j}$ satisfies the recurrence, they are not monotonically non-increasing.  As a result, we define $q_{N,j}'$ to be a monotonically non-increasing modification of $q_{N,j}$: 
\begin{equation*}
q_{N,j}' = 
\begin{cases}
&\max \bigg \{ 1 - \frac{\log(j)^{2}}{Nc_{i}}, 1- \frac{\log(K_{i+1})^{2}}{Nc_{i+1}}\bigg\}, \quad  K_{i} \leq j < K_{i+1}, \text{ for } 1 \leq i \leq r,\\
&q_{N,j}, \quad \text{ if } j\geq K_{i+1} \text{ or } j \leq K_{1}.
\end{cases}
\end{equation*}
It is clear that $q_{N,j}' \geq q_{N,j}$ and now we claim that $q_{N,j}' \leq p_{N+N_{1},j}$. Indeed, as $q_{N,j}~\leq~p_{N+N_{1},j}$ and $q_{N,j}' \neq q_{N,j}$ only if $q_{N,j}' = q_{N,K_{i+1}}$, we have $q_{N,j}' \leq p_{N+N_{1},K_{i+1}} \leq p_{N+N_{1},j}$ by monotonicity of $p$. Lastly, 

\begin{lemma}
The sequence $q_{N,j}'$ is monotonically non-increasing in $j$. 
\end{lemma}
\begin{proof}
It suffices to check the case when $ K_{i} \leq j < K_{i+1}$. Note that by our choice of constants, 
\begin{equation*}
    \delta_{i+1} \leq \frac{a\triangle_i}{(i+r_0+1)} \leq \frac{a}{i+r_0+1} \leq \frac{1/2}{i+r_0+1/2},
\end{equation*}
which implies 
\begin{equation}\label{eq:Sunday!!}
    \frac{c_i}{c_{i+1}} = \frac{c_{i+1} - \delta_{i+1}}{c_{i+1}} \geq 1 - \delta_{i+1} \geq 1 - \frac{1/2}{i+r_0+1/2}.
\end{equation}
On the other hand, 
\begin{equation*}
    \frac{\log(K_i)^2}{\log(K_{i+1})^2} \leq \frac{i + r_0}{i + r_0 + 1 - 1/2} = 1 - \frac{1/2}{i+r_0+1/2},
\end{equation*}
which combined with \eqref{eq:Sunday!!} leads to
\begin{equation*}
   \frac{\log(K_i)^2}{\log(K_{i+1})^2} \leq \frac{c_i}{c_{i+1}}.
\end{equation*}
This concludes that \[\frac{\log(K_i)^2}{Nc_{i}} \leq \frac{\log(K_{i+1})^2}{Nc_{i+1}},\]
which ends the proof of the lemma. 
\end{proof} 
Furthermore, the size of this increase when $q_{N,j} \neq q_{N,j}'$ is bounded above by:
\begin{equation*}
  |q_{N,j}'-q_{N,j}| =   \frac{\log(j)^2}{Nc_{i}} - \frac{\log(K_{i+1})^2}{Nc_{i+1}}  \leq \frac{\log(j)^2}{Nc_{i}} - \frac{\log(j)^2}{Nc_{i+1}} \leq \delta_{i+1}\frac{\log(j)^2}{N^2} \leq \frac{a\triangle_{i+1}}{N}. 
\end{equation*}
So, as before, we estimate the difference in $f$ as
\begin{equation} \label{eq:itasarwrmoinruiogherwyugweruygewryugryugeryukfg56t32342}
\begin{split}
    E_c' := f(q_{N,1}, \ldots, q_{N,k})-f(q_{N,1}',\ldots, q_{N,k}')&\leq \sum_{j = 1, K_i = k-j+1}^{k}|q_{N,j}'-q_{N,j}|\frac{\log(k-j)^2\delta_i}{N}\\
    &\leq \sum_{j = 1, K_i = k-j+1}^{k}|q_{N,j}'-q_{N,j}|\frac{a\triangle_{i}}{N} \\
    &\leq \sum_{j = 1, K_i = k-j+1}^{k}\frac{a\triangle_{\alpha_j+1}}{N} \frac{a\triangle_{i}}{N},
    \end{split}
\end{equation}
where, $\alpha_{j} \geq 1$ is the index such that
\begin{equation*}
    j = k - K_i + 1, \quad K_{\alpha_j} \leq j < K_{\alpha_j+1}.
\end{equation*}

Here we have used the following fact.
\begin{lemma} If  $K_i = k-j+1$, for some  $0\leq i \leq r$, then
\begin{equation*}
    -\frac{\partial f}{\partial q_{N,j}} \leq \frac{\log(k-j)^2\delta_{i}}{N}.
\end{equation*}
\end{lemma}
\begin{proof}
Let $m = k-j$. Note that the negative derivative is bounded by
\begin{equation*}
\begin{split}
    q_{N,m+1} - q_{N,m} &= \frac{1}{c_{i-1}N}\log(m)^2 - \frac{1}{c_iN}\log(m+1)^2 
    \\ &\leq  \frac{\log(m)^2}{N}\frac{\bigg(c_i - c_{i-1}\bigg)}{c_{i-1}^2} \leq  \log(m)^2\frac{\delta_i}{N}.
\end{split}
\end{equation*}
\end{proof}

We bound the sum in \eqref{eq:itasarwrmoinruiogherwyugweruygewryugryugeryukfg56t32342} term by term. Set $m = \max\{i,\alpha_j+1\}$. Note that
\begin{multline*}
    K_{r+1} \leq k+1 \leq K_i + K_{\alpha_j+1} \implies \sqrt{r+r_0} \leq \log(e^{\sqrt{r_0+i}} + e^{\sqrt{r_0+\alpha_j+1}}) \\ \leq \log(e^{\sqrt{r_0+m}}) + \log(2) \leq \sqrt{r_0+m} + \log(2) \implies 1/2(r+r_0) \leq m + r_0 \implies m \geq 1/2(r-r_0).
\end{multline*}
It follows that we can bound:
\begin{equation*}
    \triangle_{\alpha_j+1}\triangle_i \leq  \triangle_0\triangle_{m} \leq 
    \begin{cases} 
      \triangle_0^{2}(1/2(r-r_0))^{-b} \leq \triangle_0^{2}4^br^{-b} = \triangle_{0}\triangle_r4^b & r \geq 2r_0,
      \cr  \triangle_0^{2} & r < 2r_0.
   \end{cases}
\end{equation*}
As a result, using that $\triangle_{0}a4^{b}\leq C_{6}$ when $ r \geq 2r_0$ and that $\triangle_{0} \leq r \triangle_{r}$ when $r < 2r_0$, we obtain $E_c' \leq \frac{aC_6\triangle_rr}{N^2}$.

In total, our error is $\frac{aC_{total}\triangle_rr}{N^2}$, and we require this to be less than $\frac{r\triangle_{r}}{2\tilde{c}^2n^2}$.  
\end{proof}

\section{Proof of Theorem \ref{thm:distribution}}
We now use the estimates obtained in the past two sections to prove Theorem \ref{thm:distribution}.
\begin{proof}[Proof of Theorem \ref{thm:distribution}]
Define $c= \frac{\pi^{2}}{3}.$ It suffices to show that for any  $a \geq 0$,
\begin{equation*}
    \lim_{N \to \infty}\mathbb{P}\left(\frac{\log(X_N)}{\sqrt{cN}} \leq a\right) = \min(a^2,1).
\end{equation*}
If $a = 0$ then by Lemma \ref{lem:Iamgettingbetter} and the fact that $a_{2}=2$ we have for all $N\geq1$,  
$$\mathbb{P}\left(\frac{\log(X_N)}{\sqrt{cN}} \leq a\right) =  \mathbb{P}\left(X_N \leq 1\right) = 1 - p_{N,2} \leq \frac{2}{N-N_{1}}.$$
Consider $0<a<1$. We write
\begin{equation*}
    \mathbb{P}\left(\frac{\log(X_N)}{\sqrt{cN}} \leq a\right) = \mathbb{P} \left(X_N \leq e^{a\sqrt{cN}} \right) = 1 - \mathbb{P}\left(X_N \geq e^{a\sqrt{cN}}\right)=1-p_{N,\lceil e^{a\sqrt{cN}}\rceil}.
\end{equation*}
As $0<a<1$, for each $\delta>0$, we can choose $\tilde{c}$ with $c<\tilde{c} < (1+\delta)c$ such that for large $N$, 
\[ 
a\sqrt{cN} < \sqrt{\tilde{c}N + \beta^2} -\beta,
\] 
where $\beta$ is determined in Proposition \ref{prop:upperbound}. Thus, by Proposition \ref{prop:upperbound}:
\begin{equation}\label{eq:timetogohome1}
\mathbb{P}\bigg(X_N \geq e^{a\sqrt{cN}}\bigg) \leq 1-\frac{a^2cN}{c(1+\delta)(N+N_0)} = 1 - \frac{a^2}{(1+\delta)^{2}}.
\end{equation}
For the lower bound, we choose $\tilde c$ with $a\sqrt{c} < \sqrt{\tilde{c}} < \sqrt{c}$, which implies that $a\sqrt{cN} \leq \sqrt{\tilde{c}N}$, and $a\sqrt{cN}\geq \bar K$ for large $N$, where $\bar K$ comes from Proposition \ref{prop:adidas}. Then, for each $\delta>0$, for large $N$, the following holds by Proposition \ref{prop:adidas}:
\begin{equation}\label{eq:timetogohome2}
\mathbb{P}\bigg(X_N \geq e^{a\sqrt{cN}}\bigg) \geq 1-\frac{a^2cN+1}{c(1-\delta)(N-N_{1})} = 1 - \frac{a^2}{(1-\delta)^{2}}.
\end{equation}
Combining \eqref{eq:timetogohome1} and \eqref{eq:timetogohome2} we have that for each $\delta >0$, if we take $N$ large enough:
\begin{equation*}
    \frac{a^2}{(1+\delta)^{2}} \leq \mathbb{P}\bigg(\frac{\log(X_N)}{\sqrt{cN}} \leq a\bigg) \leq \frac{a^2}{(1-\delta)^{2}}.
\end{equation*}
Since this holds for each $\delta$, we conclude that
\begin{equation*}
    \lim_{N \to \infty}\mathbb{P}\bigg(\frac{\log(X_N)}{\sqrt{cN}} \leq a\bigg) = a^2,
\end{equation*}
as desired.
 
Now take $a>1$ and choose $ \tilde{c}$ so that $a\sqrt{c} > \tilde{c} > c$.  Again, by Proposition \ref{prop:upperbound}, we take $a\sqrt{cN} -1 \geq \sqrt{\tilde{c}(N+N_{0}) + \beta^2}-\beta$, so that for large $N$:
\begin{equation*}
\mathbb{P}\bigg(X_N \geq e^{a\sqrt{cN}}\bigg) \leq \frac{2\beta}{\sqrt{\tilde{c}(N+N_{0})}}.
\end{equation*}
This bound goes to 0 as $N \to \infty$, so:
\begin{equation*}
    \lim_{N \to \infty}\mathbb{P}\bigg(\frac{\log(X_N)}{\sqrt{cN}} \leq a\bigg) = 1
\end{equation*}
Last, we consider the case where $a=1$. We use monotonicity of the CDF and the discussion above to conclude that:
\begin{equation*}
    \lim_{N \to \infty}\mathbb{P}\bigg(\frac{\log(X_N)}{\sqrt{cN}} \leq a\bigg) = 1.
\end{equation*}
This ends the proof of the theorem.
\end{proof}  
We finish this section with the proof of Corollary \ref{thm:expectation}. 
\begin{proof}[Proof of Corollary \ref{thm:expectation}]
It suffices to show that the sequence $\frac{\log X_{N}}{\sqrt{cN}}$ is uniformly integrable. Let $\tilde{c} > \frac{\pi^2}{3}$.  By Proposition \ref{prop:upperbound}, there exists $\beta > 0$ and $N_0$ such that \eqref{cownunchuckpudding} holds. For large $N$:
\begin{equation*}
\begin{split}
   \mathbb{E}&\bigg(\frac{\log X_{N}}{\sqrt{cN}}\bigg)^2\\ 
   &\leq \frac{1}{cN}\bigg(\log(e^{\sqrt{\tilde{c}N+\beta^2}-\beta+1})^2+\sum_{\ell=1}^\infty \mathbb{P}[X_N \geq e^{\sqrt{\tilde{c}N+\beta^2}-\beta+\ell}]\log(e^{\sqrt{\tilde{c}N+\beta^2}-\beta+\ell+1})^2\bigg)
   \\ &\leq \frac{1}{cN}\bigg(2\tilde{c}N+\sum_{\ell=1}^\infty e^{-\beta^{-1}\ell}(\sqrt{\tilde{c}N}+\ell+1)^2\bigg)
   \\ &\leq \frac{1}{cN}\bigg(2\tilde{c}N+\tilde{c}N\sum_{\ell=1}^\infty e^{-\beta^{-1}\ell}(\ell+1)^2\bigg) = \frac{1}{c}\bigg(2\tilde{c}+\tilde{c}\sum_{\ell=1}^\infty e^{-\beta^{-1}\ell}(\ell+1)^2\bigg) < \infty.
\end{split}
\end{equation*}
Since we have bounded the second moments (for all but finitely many $N$) by a constant that does not depend on $N$, we conclude that $\frac{\log(X_N)}{\sqrt{cN}}$ is uniformly integrable. This, along with Theorem \ref{thm:distribution}, implies that \cite{Bill}:
\begin{equation*}
    \lim_{N}\mathbb{E}\bigg[\frac{\log(X_N)}{\sqrt{cN}}\bigg] = \mathbb{E}\left[\lim_{N}\frac{\log(X_N)}{\sqrt{cN}}\right] = \frac{2}{3}.
\end{equation*}
\end{proof}

\section{Other Cases and some questions}\label{sec:othercases}
\subsection{Case $p\neq 1/2$.}
This case was also considered in a slightly different setting in Section $3$ of \cite{Hambly}.
We first examine the case $p < 1/2$,  where $p$ is the probability of placing $+$ at each node.  We obtain a slightly different recurrence for this problem:
\begin{multline*}
    p_{N+1,k} = p_{N,k}^2 + p[2p_{N,k}(1-p_{N,k}) + (p_{N,1} - p_{N,2})(p_{N,k-1}-p_{N,k}) +  \\ + \cdots +(p_{N,k-1} - p_{N,k})(p_{N,1}-p_{N,k})]
\end{multline*}
Rearranging:
\begin{equation}\label{eq:MONDAY!!!!}\begin{split}
p_{N+1,k} &  = 2pp_{N,k}+ (1-p)p_{N,k}^2 + p[(p_{N,1} - p_{N,2})(p_{N,k-1}-p_{N,k}) + \ldots 
\\ &\quad +(p_{N,k-1} - p_{N,k})(p_{N,1}-p_{N,k})].
\end{split}
\end{equation}
Fortunately, \eqref{eq:MONDAY!!!!} has non-negative partial derivatives, for example,
\begin{equation*}
\frac{\partial p_{N+1,k}}{\partial p_{N,k}} = 2p + (1-2p)p_{N,k} + p(2(p_{N,k}-1)) = p_{N,k} \geq 0.
\end{equation*}
This will lead to the following result:
\begin{theorem}\label{thm:ASDpsdoihgui9fer}
If $p<1/2$ then the sequence $X_{N}$ converges in distribution to some non-trivial random variable. 
\end{theorem}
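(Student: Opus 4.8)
The plan is to show that for each fixed $k$ the sequence $(p_{N,k})_{N\ge 1}$ is non-decreasing in $N$, hence convergent, and that the resulting limits form the complementary distribution function of a genuine probability law on $\{1,2,\dots\}$. Write
\[
F_k(x_1,\dots,x_k)=x_k^2+p\Big(2x_k(1-x_k)+\sum_{j=1}^{k-1}(x_j-x_{j+1})(x_{k-j}-x_k)\Big)
\]
for the right-hand side of the recurrence for $p_{N+1,k}$ in the regime $p<1/2$, so that $p_{N+1,k}=F_k(p_{N,1},\dots,p_{N,k})$ and $(p_{N,1},\dots,p_{N,k})\in\mathcal S_k$ for every $N$. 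As observed after \eqref{eq:MONDAY!!!!}, every partial derivative of $F_k$ is non-negative on $\mathcal S_k$; since $\mathcal S_k$ is convex, $F_k$ is non-decreasing along line segments contained in $\mathcal S_k$.

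First I would prove by induction on $N$ that $p_{N+1,j}\ge p_{N,j}$ for all $j\ge 1$ simultaneously. The base case $p_{2,j}\ge p_{1,j}=\indi\{j=1\}$ is immediate (for $j=1$ both sides equal $1$, for $j\ge 2$ the left side is a probability and the right side is $0$); for the inductive step, if $p_{N,i}\ge p_{N-1,i}$ for all $i\le j$, then applying the monotonicity of $F_j$ along the segment in $\mathcal S_j$ joining $(p_{N-1,1},\dots,p_{N-1,j})$ to $(p_{N,1},\dots,p_{N,j})$ yields $p_{N+1,j}=F_j(p_{N,\cdot})\ge F_j(p_{N-1,\cdot})=p_{N,j}$. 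Being bounded and monotone, $p_{N,k}\uparrow u_k\in[0,1]$ as $N\to\infty$, with $u_1=1$ and $(u_k)_k$ non-increasing. Passing to the limit $N\to\infty$ in the recurrence (whose right-hand side is a fixed polynomial in the $p_{N,j}$, $j\le k$) gives the fixed-point relation
\begin{equation*}
(1-2p)\,u_k(1-u_k)=p\sum_{j=1}^{k-1}(u_j-u_{j+1})(u_{k-j}-u_k),\qquad k\ge 1.
\end{equation*}

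The heart of the argument — and what I expect to be the main obstacle — is tightness: one must rule out escape of mass to infinity, i.e.\ show $L:=\lim_{k\to\infty}u_k=0$. For $k=2$, using $u_1=1$ the relation above becomes $u_2=(1-p)u_2^2+p$, whose roots are $1$ and $p/(1-p)$; since $(p_{N,2})_N$ is produced by iterating the increasing map $x\mapsto(1-p)x^2+p$ starting from $p_{1,2}=0$, and this map leaves the interval $[0,\,p/(1-p)]$ invariant, we get $u_2=p/(1-p)<1$, hence $L\le u_2<1$. Now let $k\to\infty$ in the fixed-point relation: the left side tends to $(1-2p)L(1-L)$, while on the right each summand $(u_j-u_{j+1})(u_{k-j}-u_k)$ tends to $0$ for fixed $j$ (because $u_{k-j},u_k\to L$) and is bounded by $u_j-u_{j+1}$, a summable sequence since $\sum_j(u_j-u_{j+1})=1-L$; dominated convergence forces the right side to $0$. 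Thus $(1-2p)L(1-L)=0$, and since $p\neq 1/2$ this gives $L\in\{0,1\}$; as $L<1$ we conclude $L=0$.

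It remains to assemble the pieces. Since $u_k\downarrow 0$, the numbers $u_k$ are the tail probabilities $\mu(\{k,k+1,\dots\})$ of a probability measure $\mu$ on $\{1,2,\dots\}$, and from $p_{N,k}=\mathbb P(X_N\ge k)\to u_k$ for every $k$ it follows (e.g.\ by Scheffé, the limiting point masses $u_k-u_{k+1}$ summing to $1$) that $X_N$ converges in distribution to $X_\infty\sim\mu$. The limit is non-degenerate because $\mathbb P(X_\infty=1)=1-u_2=(1-2p)/(1-p)$ lies strictly between $0$ and $1$, so $\mu$ is neither a point mass nor concentrated at infinity.
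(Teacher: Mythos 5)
Your proof is correct and follows essentially the same route as the paper: monotonicity of $p_{N,k}$ in $N$ via the non-negative partial derivatives, passage to the fixed-point equation, identification of $u_2=p/(1-p)$ from the $k=2$ recurrence, and a dominated-convergence limit in $k$ forcing $L\in\{0,1\}$ and hence $L=0$. Your treatment of $u_2$ (invariant interval for $x\mapsto(1-p)x^2+p$) and the clean dominated-convergence step are slightly tidier than the paper's "plot the recurrence" and explicit $\epsilon$-estimate, but the argument is the same.
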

This will follow from the following claim.
\begin{proposition}\label{prop.dsadda}
The sequence $X_{N}$ is tight, that is, for each $\epsilon > 0$, there exists $k$ such that: $\mathbb{P}[X_N \geq k] < \epsilon$ for large $N$.
\end{proposition}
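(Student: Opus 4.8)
The plan is to dominate $p_{N,k}=\mathbb P(X_N\ge k)$, uniformly in $N$, by a single $N$-independent power law $q_k=k^{-\alpha}$, and then let $k\to\infty$. The first step is to transfer the monotone-induction mechanism of Lemma~\ref{lem:lemUB} to the case $p<1/2$. Writing the right-hand side of \eqref{eq:MONDAY!!!!} as
\[
\Phi(x)_k \;=\; x_k^2 + 2p\,x_k(1-x_k) + p\sum_{\ell=1}^{k-1}(x_\ell-x_{\ell+1})(x_{k-\ell}-x_k),
\]
one has, as the paper verifies, $\partial\Phi_k/\partial x_k\ge0$ on $\mathcal S_k$, and a computation identical to \eqref{eq:PD} gives $\partial\Phi_k/\partial x_j = 2p\,(x_{k-j}-x_{k-j+1})\ge0$ for $j<k$; so all partial derivatives of $\Phi$ are non-negative on $\mathcal S_k$. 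Exactly as in the proof of Lemma~\ref{lem:lemUB}, it follows that if an array $(q_k)_{k\ge1}$ satisfies $(q_1,\dots,q_k)\in\mathcal S_k$ for every $k$, together with $q_1\ge p_{1,1}=1$, $q_k\ge p_{1,k}=0$ for $k\ge2$, and is a stationary supersolution in the sense that $q_k\ge\Phi(q)_k$ for every $k$, then $p_{N,k}\le q_k$ for all $N$ and $k$.

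Next comes the choice of $q$. An exponentially small $q_k$ will not do: the bilinear term $\sum_\ell (q_\ell-q_{\ell+1})(q_{k-\ell}-q_k)$ is then of order $k\,q_k$ and eventually overwhelms the linear part. Instead I would take $q_k=k^{-\alpha}$ for a small $\alpha=\alpha(p)>0$ to be chosen; this is manifestly in $\mathcal S_k$ for all $k$ and has $q_1=1$. Expanding $\Phi(q)_k$, the supersolution inequality $q_k\ge\Phi(q)_k$ simplifies to
\[
(1-2p)\,k^{-\alpha}\bigl(1-k^{-\alpha}\bigr)\;\ge\;p\,S(k),
\qquad
S(k)=\sum_{\ell=1}^{k-1}\Bigl(\tfrac{1}{\ell^{\alpha}}-\tfrac{1}{(\ell+1)^{\alpha}}\Bigr)\Bigl(\tfrac{1}{(k-\ell)^{\alpha}}-\tfrac{1}{k^{\alpha}}\Bigr),
\]
and $S(k)$ is exactly the quantity estimated in Lemma~\ref{lem:SRSS}.

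To close the argument I would apply Lemma~\ref{lem:SRSS} with, say, $\varepsilon=1$, obtaining an $\alpha_0>0$ such that $S(k)\le \tfrac{\alpha^2\pi^2}{3}\,k^{-2\alpha}$ for all $k\ge1$ whenever $\alpha<\alpha_0$, together with the elementary bound $k^{-\alpha}(1-k^{-\alpha})=k^{-2\alpha}(k^{\alpha}-1)\ge k^{-2\alpha}\,\alpha\log2$ valid for every $k\ge2$ (the case $k=1$ is trivial, both sides being $0$). The displayed inequality then holds as soon as $(1-2p)\alpha\log2\ge p\,\alpha^2\pi^2/3$, i.e.\ $\alpha\le \tfrac{3(1-2p)\log2}{p\,\pi^{2}}$, a strictly positive threshold precisely because $0<p<1/2$ (the case $p=0$ being trivial, as then $X_N\equiv1$). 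Fixing $\alpha$ below both $\alpha_0$ and this threshold makes $q_k=k^{-\alpha}$ a stationary supersolution for all $k\ge1$, so the comparison of the first paragraph yields $\mathbb P(X_N\ge k)=p_{N,k}\le k^{-\alpha}$ for every $N$ and $k$. Since $k^{-\alpha}\to0$, for any $\varepsilon>0$ one chooses $k$ with $k^{-\alpha}<\varepsilon$ and gets $\mathbb P(X_N\ge k)<\varepsilon$ for all $N$, which is the asserted tightness; Theorem~\ref{thm:ASDpsdoihgui9fer} then follows by a standard subsequential-limit argument using the continuity of $\Phi$.

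I do not expect a genuine obstacle here. The content lies in the two modelling choices: (i) realising that the right comparison function is a power law $k^{-\alpha}$ rather than an exponentially small one, since only a slowly decaying bound keeps the bilinear term comparable to $q_k$ itself; and (ii) noticing that Lemma~\ref{lem:SRSS}, proved in the excerpt for the tail analysis when $p=1/2$, is already tailored to control $S(k)$. The one point to handle carefully is that the supersolution inequality must hold for \emph{all} $k$ at once, including small $k$; the estimates above achieve this with a single $\alpha$ because Lemma~\ref{lem:SRSS} is uniform in $k$ and $k^{\alpha}-1\ge\alpha\log2$ holds for every $k\ge2$.
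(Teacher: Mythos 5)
Your proof is correct, but it takes a genuinely different route from the paper. The paper argues softly: using the non-negative partial derivatives only to get monotonicity of $p_{N,k}$ in $N$, it passes to the limit $c_k=\lim_{N\to\infty}p_{N,k}$, derives the fixed-point identity \eqref{eq:Steveisloud}, lets $k\to\infty$ on both sides to conclude $c=\lim_k c_k$ satisfies $c-(1-p)c^2=p(2-c)c$, hence $c\in\{0,1\}$, and rules out $c=1$ via $c_2=p/(1-p)<1$ (Lemma \ref{fskdo}). You instead exhibit an explicit $N$-independent stationary supersolution $q_k=k^{-\alpha}$ and run the same comparison induction as Lemma \ref{lem:lemUB}; the supersolution inequality reduces to $(1-2p)k^{-2\alpha}(k^{\alpha}-1)\ge p\,S(k)$, which Lemma \ref{lem:SRSS} (uniform in $k$) settles for $\alpha$ small. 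Your computations check out: the coordinatewise monotonicity of the $p<1/2$ recurrence map on the convex set $\mathcal S_k$ does transfer verbatim, the base case $q_k\ge p_{1,k}$ holds, and $2^{\alpha}-1\ge\alpha\log 2$ closes the small-$k$ issue. What your approach buys is a quantitative, uniform-in-$N$ power-law tail $\mathbb P(X_N\ge k)\le k^{-\alpha(p)}$, strictly stronger than tightness, and it sidesteps the interchange of the $N\to\infty$ and $k\to\infty$ limits that the paper must justify when passing to the limit in the right-hand side $R_k$. What the paper's route buys is independence from the quantitative estimate of Lemma \ref{lem:SRSS} and a direct handle on the limiting distribution itself (the $c_k$), which it reuses to prove Theorem \ref{thm:ASDpsdoihgui9fer}. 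Your closing remark that the theorem then follows "by a subsequential-limit argument" is slightly off from the paper, which gets convergence (not just subsequential convergence) from monotonicity of $p_{N,k}$ in $N$; but that is outside the statement you were asked to prove.
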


\begin{proof}[Proof of Theorem \ref{thm:ASDpsdoihgui9fer}] By Proposition \ref{prop.dsadda} the sequence of random variables $X_{N}$ is tight. On the other hand, we note that $p_{N,k}=\mathbb P(X_{N}\geq k)$ is monotonic increasing in $N$ for each fixed $k$, thus it converges to a limit. This limit must be non-trivial as $X_{N}\geq 1$ and $0<\lim_{N\to \infty}p_{N,2}<1$ by Lemma \ref{fskdo}. The claimed monotonicity follows inductively using non-negative partial derivatives. 
\end{proof}
\begin{proof}[Proof of Proposition \ref{prop.dsadda}]
Let $c_k =  \lim_{n \to \infty} p_{N,k}$. If we observe \eqref{eq:MONDAY!!!!}, then because each term is approaching a limit, by continuity, we can substitute $c_j$ in for each term ($c_1 = 1$):
\begin{equation*}
    c_k = 2pc_k+ (1-p)c_k^2 + p[(c_1 - c_2)(c_{k-1}-c_k) +\\ \cdots +(c_{k-2} - c_{k-1})(c_2-c_k) + (c_{k-1} - c_k)(c_1-c_k)]
\end{equation*}
Rearranging, we get
\begin{equation}\label{eq:Steveisloud}
    c_k - (1-p)c_k^2 = p[(c_1 - c_2)c_{k-1} + (c_2 - c_3)c_{k-2} +\\ \cdots +(c_{k-2} - c_{k-1})c_2 + c_{k-1}].
\end{equation}
Our goal is to show that for $p < 1/2$, we have $c_k \to 0$ as $k \to \infty$. 
\begin{lemma}\label{fskdo} One has
$c_2 = \frac{p}{1-p} < 1.$
\end{lemma}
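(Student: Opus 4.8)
The plan is to reduce the claim to a one-dimensional scalar recursion. Set $u_N := \mathbb{P}(X_N = 1) = 1 - p_{N,2}$. First I would specialize the recurrence \eqref{eq:MONDAY!!!!} (equivalently the $k=2$ instance of \eqref{eq:recurrencemaster}) to $k=2$. Since $p_{N,1} \equiv 1$, the only quadratic contribution from the sum is $(p_{N,1}-p_{N,2})^2 = u_N^2$, and after substituting $p_{N,2} = 1 - u_N$ and simplifying, the recurrence collapses to the closed form
\[
u_{N+1} = (1-p)\,u_N(2 - u_N), \qquad u_1 = 1,
\]
where $u_1 = 1$ because $p_{1,1}=1$ and $p_{1,k}=0$ for $k>1$.

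Next I would analyze the map $g(u) = (1-p)u(2-u)$ on $[0,1]$. Its derivative $g'(u) = 2(1-p)(1-u)$ is non-negative, so $g$ is non-decreasing on $[0,1]$, and its fixed points are $u = 0$ and $u^* := \tfrac{1-2p}{1-p}$, which is strictly positive (and $\le 1$) since $p < 1/2$. Because $g$ is increasing with $g(u^*) = u^*$ and $g(1) = 1-p \le 1$, the interval $[u^*,1]$ is invariant under $g$; hence $u_N \in [u^*,1]$ for every $N$. Moreover $u_2 = g(1) = 1-p \le 1 = u_1$, and iterating the increasing map $g$ gives $u_{N+1} \le u_N$ for all $N$. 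Thus $(u_N)$ is non-increasing and bounded below by $u^*$, so it converges to a fixed point of $g$ that is at least $u^* > 0$ — which forces $\lim_N u_N = u^*$.

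Translating back, $p_{N,2} = 1 - u_N \to 1 - u^* = \tfrac{p}{1-p}$, so $c_2 = \tfrac{p}{1-p}$, and this is $<1$ exactly because $p/(1-p) < 1 \iff p < 1/2$. The only delicate point is ruling out the spurious fixed point $c_2 = 1$ (i.e. $\lim_N u_N = 0$); this is precisely where the hypothesis $p<1/2$ is used, through the positivity of $u^*$ and the invariance of $[u^*,1]$ under $g$. Everything else is a routine algebraic verification.
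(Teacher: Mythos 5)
Your proof is correct and follows essentially the same route as the paper: both reduce to the scalar recursion $u_{N+1}=(1-p)(1-(1-u_N)^2)$ for $u_N=\mathbb{P}(X_N=1)$ and identify the fixed points $0$ and $\tfrac{1-2p}{1-p}$. Your monotonicity/invariant-interval argument actually makes rigorous the convergence to the nonzero fixed point, which the paper only justifies by remarking that ``if we plot the recurrence, then it is clear that'' it is an attractor.
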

\begin{proof}
We have the recurrence for $x_n = \mathbb{P}[X_n=1]$:
\begin{equation*}
    x_{n+1} = (1-p)(1 - (1 -x_n)^2)
\end{equation*}
The fixed points of this recurrence are the solutions to:
\begin{equation*}
    x = (1-p)(1 - (1 -x)^2) \Leftrightarrow x = (1-p)(2x - x^2) \Leftrightarrow x = 0, 1 - \frac{p}{1-p}
\end{equation*}
If we plot the recurrence, then it is clear that $x = 1 - \frac{p}{1-p}$ is an attractor for starting point $x_1 = 1$. The desired $c_2$ follows.
\end{proof}
Since $c_k$ is monotonic non-increasing, we can set $c = \lim_{k \to \infty}c_k$. Then, if we take the limit of both sides of equation \eqref{eq:Steveisloud}, we obtain on the left side 
\begin{equation*}
    \lim_{k \to \infty}c_k - (1-p)c_k^2 = c - (1-p)c^2.
\end{equation*}
On the other hand the right side of \eqref{eq:Steveisloud}, call it $R_{k}$  is such that $\lim_{k\to \infty} R_{k}= p(2-c)c$. Indeed, for each $\epsilon > 0$, there exists a $K$ such that $|c - c_K| < \epsilon$.  Choose any $k \geq 2K$. We have:
\begin{equation*}
\begin{split}
    |R_k - p(2-c)c| &\leq p|c - c_{k-1}| + p|(c_1 - c_2)c_{k-1} + \cdots +(c_{k-2} - c_{k-1})c_2 - (1-c)c| \\ 
    &\leq p\epsilon + pc|c-c_K| + p|c_1 - c_K|\sup_{k-K+1 \leq j \leq k-1}|c_j - c| \\
    &\quad + p|c_{K+1} - c_{k-1}|\sup_{1 \leq j \leq k-K}c_j \\ 
    &\leq p\epsilon + pc\epsilon + p\epsilon + p\epsilon \leq 4\epsilon.
\end{split}
\end{equation*}
Since this holds for all $\epsilon$, we have that $R_k$ converges to $p(2-c)c$.  Consequently:
\begin{equation*}
    c - (1-p)c^2 = p(2-c)c \Leftrightarrow c \in \{0,1\}.
\end{equation*}
By Lemma \ref{fskdo}, we cannot have $c = 1$.  Thus, $c = 0$. This proves tightness of the sequence $X_{N}$.
\end{proof}

Lastly, we briefly analyze the case where $p > 1/2$. Let the children of $X_{N+1}$ be $X_N$ and $X'_N$. One has
\begin{equation*}
    \mathbb{E}[X_{N+1}] = a\mathbb{E}[X_N + X'_N] + (1-a)\mathbb{E}[\min(X_N,X'_N)] \geq 2a\mathbb{E}[X_N].
\end{equation*}
Since $2a > 1$, we have exponential growth for the expectation of $X_{N}$:
\begin{equation*}
    \mathbb{E}[X_N] \geq (2p)^N.
\end{equation*}

We end this section with a few questions of interest. 
\begin{enumerate}[(a)]
\item Consider the merging/annihilation process starting with $n$ particles of mass $1$. What is the collision order that maximizes the final total mass? If $n=2^{N}$, this should be given by a binary tree.  In other words, what is the tree that maximizes $\mathbb E X_{n}$?
\item What is the final mass when the motion of particles is random? For instance, what if particles perform a random walk on $\mathcal T$  starting at different locations? How about on $\mathbb Z$, or on $\mathbb Z^{2}$? 
\item Study the same problem on a $d$-ary tree. Is there a transition at $p=1/d$?
\item Instead of mass $1$ for each particle, start the process with independent masses with common distribution $F$. How does $X_{N}$ depend on $F$? Simulations suggest that the bottom of the support plays a major role determining the value of $X_{N}$.   
\end{enumerate}

\end{document}